\DeclareFontFamily{OT1}{pzc}{}
\DeclareFontShape{OT1}{pzc}{m}{it}%
              {<-> s * [1.15] pzcmi7t}{}
\DeclareMathAlphabet{\mathpzc}{OT1}{pzc}{m}{it}
\def\user@resume{resume}
\def\user@intermezzo{intermezzo}
\newcounter{previousequation}
\newcounter{lastsubequation}
\newcounter{savedparentequation}
\renewenvironment{subequations}[1][]{%
      \def\user@decides{#1}%
      \setcounter{previousequation}{\value{equation}}%
      \ifx\user@decides\user@resume 
           \setcounter{equation}{\value{savedparentequation}}%
      \else  
      \ifx\user@decides\user@intermezzo
           \refstepcounter{equation}%
      \else
           \setcounter{lastsubequation}{0}%
           \refstepcounter{equation}%
      \fi\fi
      \protected@edef\theHparentequation{%
          \@ifundefined {theHequation}\theequation \theHequation}%
      \protected@edef\theparentequation{\theequation}%
      \setcounter{parentequation}{\value{equation}}%
      \ifx\user@decides\user@resume 
           \setcounter{equation}{\value{lastsubequation}}%
         \else
           \setcounter{equation}{0}%
      \fi
      \def\theequation  {\theparentequation  \alph{equation}}%
      \def\theHequation {\theHparentequation \alph{equation}}%
      \ignorespaces
}{%
  \ifx\user@decides\user@resume
       \setcounter{lastsubequation}{\value{equation}}%
       \setcounter{equation}{\value{previousequation}}%
  \else
  \ifx\user@decides\user@intermezzo
       \setcounter{equation}{\value{parentequation}}%
  \else
       \setcounter{lastsubequation}{\value{equation}}%
       \setcounter{savedparentequation}{\value{parentequation}}%
       \setcounter{equation}{\value{parentequation}}%
  \fi\fi
  \ignorespacesafterend
}
\newcommand{\mylabel}[2]{#2\def\@currentlabel{#2}\label{#1}}
\newcommand{\proofstep}[1]{%
  \par
  \addvspace{\medskipamount}
  \textit{#1\@addpunct{.}}\enspace\ignorespaces
}
\renewcommand{\d}{\ensuremath{\mathrm{d}}}
\newcommand{\N}{\ensuremath{\mathbb{N}}}
\newcommand{\R}{\ensuremath{\mathbb{R}}}
\def\R{\mathbb{R}}
\def\N{\mathbb{N}}
\newcommand{\II}{\ensuremath{\mathcal I}}
\newcommand{\JJ}{\ensuremath{\mathcal J}}
\newcommand{\TT}{\ensuremath{\mathcal T}}
\newcommand{\rbar}[2]{\ensuremath{\left.#1\right|_{#2}}}
\newtheorem{theorem}{Theorem}[section]
\newtheorem{proposition}[theorem]{Proposition}
\newtheorem{lemma}[theorem]{Lemma}
\newtheorem{corollary}[theorem]{Corollary}
\numberwithin{equation}{section}
\newcommand{\cG}{{\mathcal G}}  
\newcommand{\cS}{{\mathcal S}}  
\newcommand{\cT}{{\mathcal T}}  
\newcommand{\Dualpair}[2]{\ensuremath{\left\langle \kern-0.5ex \left\langle #1,#2 \right\rangle \kern-0.5ex \right\rangle}}
\newcommand{\qvar}[2]{\ensuremath{\left\langle \kern-0.5ex \left\langle #1 \right\rangle \kern-0.5ex \right\rangle_{#2}}}
\newcommand{\eps}{\ensuremath{\varepsilon}}
\newcommand{\verti}[1]{\ensuremath{\left\lvert #1 \right\rvert}}
\newcommand{\vertii}[1]{\ensuremath{\left\lVert #1 \right\rVert}}
\newcommand{\vertiii}[1]{{\vertii{\kern-0.25ex\vertii{\kern-0.25ex\vertii{ #1
    }\kern-0.25ex}\kern-0.25ex}}}
\renewcommand{\d}{\ensuremath{{\rm d}}}
\newcommand{\order}{\ensuremath{\mathcal O}}
\newcommand{\landau}{\ensuremath{\mathpzc o}}
\begin{document}
%
\title[The Cox-Voinov law in the partial wetting regime]{The Cox-Voinov law for traveling waves in the partial wetting regime}
\keywords{Lubrication approximation, viscous thin films, traveling waves, invariant manifolds, transversality, rigorous asymptotics}
\subjclass[2020]{34B08, 34B40, 35C07, 35K25, 35K65, 37D10, 76D08}
\thanks{The first author is grateful to Lorenzo Giacomelli and Felix Otto for discussions on work preceding this paper. The second author thanks Floris Roodenburg for discussions. This work is partially based on the second author's bachelor thesis in applied mathematics prepared under the advice of the first author at Delft University of Technology.}
\date{\today}
\author{Manuel V. Gnann}
\address[Manuel~V.~Gnann]{Delft Institute of Applied Mathematics, Faculty of Electrical Engineering, Mathematics and Computer Science, Delft University of Technology, Mekelweg 4, 2628 CD Delft, Netherlands}
\email{M.V.Gnann@tudelft.nl}
\author{Anouk C. Wisse}
\address[Anouk~C.~Wisse]{Delft Institute of Applied Mathematics, Faculty of Electrical Engineering, Mathematics and Computer Science, Delft University of Technology, Mekelweg 4, 2628 CD Delft, Netherlands}
\email{A.C.Wisse@student.tudelft.nl}
\begin{abstract}
We consider the thin-film equation $\partial_t h + \partial_y \left(m(h) \partial_y^3 h\right) = 0$ in $\{h > 0\}$ with partial-wetting boundary conditions and inhomogeneous mobility of the form $m(h) = h^3+\lambda^{3-n}h^n$, where $h \ge 0$ is the film height, $\lambda > 0$ is the slip length, $y > 0$ denotes the lateral variable, and $n \in (0,3)$ is the mobility exponent parameterizing the nonlinear slip condition. The partial-wetting regime implies the boundary condition $\partial_y h = \mathrm{const.} > 0$ at the triple junction $\partial\{h > 0\}$ (nonzero microscopic contact angle). Existence and uniqueness of traveling-wave solutions to this problem under the constraint $\partial_y^2 h \to 0$ as $h \to \infty$ have been proved in previous work by Chiricotto and Giacomelli in [Commun. Appl. Ind. Math., 2(2):e--388, 16, 2011]. We are interested in the asymptotics as $h \downarrow 0$ and $h \to \infty$. By reformulating the problem as $h \downarrow 0$ as a dynamical system for the difference between the solution and the microscopic contact angle, values for $n$ are found for which linear as well as nonlinear resonances occur. These resonances lead to a different asymptotic behavior of the solution as $h\downarrow0$ depending on $n$.

\medskip

Together with the asymptotics as $h\to\infty$ characterizing the Cox-Voinov law for the velocity-dependent macroscopic contact angle as found by Giacomelli, the first author of this work, and Otto in [Nonlinearity, 29(9):2497--2536, 2016], the rigorous asymptotics of traveling-wave solutions to the thin-film equation in partial wetting can be characterized. Furthermore, our approach enables us to analyze the relation between the microscopic and macroscopic contact angle. It is found that the Cox-Voinov law for the macroscopic contact angle depends continuously differentiably on the microscopic contact angle.
\end{abstract}
\maketitle
%

\section{Introduction}
\subsection{The thin-film equation formulated as a classical free-boundary problem}
The following thin-film equation with boundary conditions in a moving domain $(Y,\infty)$ is studied:
\begin{subequations}\label{TFE}
\begin{align}
    \partial_t h + \partial_y \left((h^3+\lambda^{3-n}h^n) \partial_y^3 h\right) &= 0 && \text{for }t>0\text{ and } y>Y, \label{TFE_PDE}\\
    h &=  0 && \text{for } t>0 \text{ and } y=Y, \label{BC:1}\\
    \partial_y h &= k>0 && \text{for } t>0 \text{ and } y=Y, \label{BC:2}\\
    \lim_{y \downarrow Y} (h^2+\lambda^{3-n} h^{n-1}) \partial_y^3 h &= \tfrac{\d Y}{\d t} &&\text{for } t>0. \label{BC:3}
\end{align}
\end{subequations}
Here, $h = h(t,y)$ denotes the height of a liquid thin film on a flat surface at time $t>0$ and base point $y \in (Y,\infty)$, where $Y$ is a function of time $t \ge 0$, which is visualized in Figure~\ref{fig:thin_film}.
\begin{figure}[htb]
    \centering
\begin{tikzpicture}[x=0.75pt,y=0.75pt,yscale=-1,xscale=1]
\draw   [color={rgb, 255:red, 74; green, 144; blue, 226 }] (97.05,154.63) .. controls (189.5,145.33) and (280.5,67.33) .. (319.5,64.33) ;
\draw  [draw opacity=0][fill={rgb, 255:red, 227; green, 227; blue, 227 }  ,fill opacity=1 ] (77.05,154.63) -- (301.5,154.63) -- (301.5,194.63) -- (77.05,194.63) -- cycle ;
\draw  (50,154.63) -- (320.5,154.63)(77.05,16.33) -- (77.05,170) (313.5,149.63) -- (320.5,154.63) -- (313.5,159.63) (72.05,23.33) -- (77.05,16.33) -- (82.05,23.33)  ;

\draw (169,166) node [anchor=north west][inner sep=0.75pt]   [align=left] {solid};
\draw (250,115) node [anchor=north west][inner sep=0.75pt]   [align=left] {liquid};
\draw (131,61) node [anchor=north west][inner sep=0.75pt]   [align=left] {gas};
\draw (300,156.4) node [anchor=north west][inner sep=0.75pt]    {$y$};
\draw (63,27.4) node [anchor=north west][inner sep=0.75pt]    {$z$};
\draw (286,54.4) node [anchor=north west][inner sep=0.75pt]    {$h$};
\end{tikzpicture}
\caption{Example of a thin film as described by \eqref{TFE}}
    \label{fig:thin_film}
\end{figure}
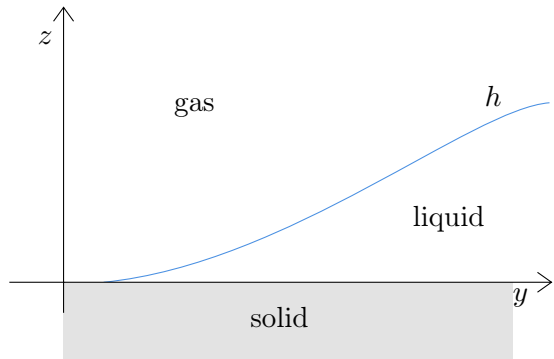
For simplicity we assume translation invariance in the third physical direction (perpendicular to the $(y,z)$-plane). Equation~\eqref{TFE_PDE} is a lubrication model, which means that it describes the flow of the fluid of a thin and viscous film in which the dynamics in the vertical direction $z$ are averaged out. It has the form of a continuity equation
\begin{equation*}
    \partial_t h + \partial_y \left(h u\right)=0,
\end{equation*}
where $h$ is the film height and $u$ is the velocity of the fluid in the horizontal direction $y$ which is averaged in the vertical direction $z$. In the case of equation \eqref{TFE_PDE}, the velocity of the flow $u$ is given by $u = (h^2+\lambda^{3-n}h^{n-1}) \partial_y^3 h$. The equation can be derived from the Navier-Stokes free-boundary problem, which has been done in detail for instance in \cite[Chapter~2, Section~B]{ODB}.

\medskip

The exponent $n$ is called the mobility exponent and we consider $n\in(0,3)$. This is because on one hand, if $n\leq0$ the speed of propagation of the film is infinite. On the other hand, in case of $n\geq3$ or $\lambda = 0$ (vanishing slip length), the boundary of the film does not move \cite{HuhScriven1971,DussanDavis1974}. Note that the regime $n\in(0,1)$ is physically not justified as well, as the film height $h$ can in certain situations become negative (see for instance \cite{BW}). Hence, our results for $n \in (0,1)$ should be considered as purely motivated from the mathematical perspective while the parameter regime $n \in [1,3)$ is of mathematical as well as physical interest. In particular, this interval contains the physically relevant values $n=1$ (free slip in the Hele-Shaw cell, see e.g.~\cite{GiacomelliOtto2003,KnuepferMasmoudi2013,KnuepferMasmoudi2015}, or the Greenspan slip condition \cite{Greenspan1978}) and $n=2$ (linear Navier slip, see e.g.~\cite{Navier1823,BEIMR,ODB,Knuepfer2011}).

\medskip

The film covers the interval $(Y,\infty)$ and has a free boundary at $y=Y$ called contact line or triple junction since it parametrizes the in our case straight but time-dependent line where liquid, gas, and solid meet. The trivial constraint \eqref{BC:1} entails that the height of the thin film at the triple junction is zero. Condition~\eqref{BC:2} implies that the contact angle between the solid and the film at the contact line is equal to $\theta = \arctan k$, where $k>0$ (partial-wetting regime). Since in lubrication approximation $k$ is necessarily small, we simply call $k$ the (microscopic) contact angle. The kinematic condition \eqref{BC:3} implies that, on approaching the contact line, the vertically averaged horizontal velocity $u$ is the same as the free boundary's velocity $\frac{\d Y}{\d t}$.

\subsection{Microscopic versus macroscopic contact angle}
The capillary forces acting at the triple junction are depicted in Figure~\ref{fig:young's_equation}.
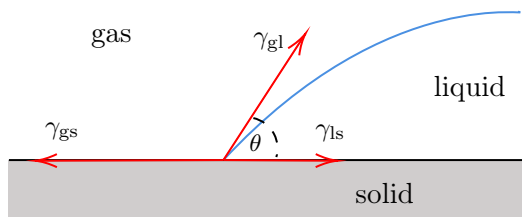
\begin{figure}[htp]
    \centering

\tikzset{every picture/.style={line width=0.75pt}}

\begin{tikzpicture}[x=0.75pt,y=0.75pt,yscale=-1,xscale=1]

\draw  [draw opacity=0][fill={rgb, 255:red, 208; green, 205; blue, 205 }  ,fill opacity=1 ] (69.5,111) -- (325.5,111) -- (325.5,143) -- (69.5,143) -- cycle ;
\draw [color={rgb, 255:red, 74; green, 144; blue, 226 }  ,draw opacity=1 ]   (176.5,111) .. controls (259.5,21) and (333.5,39.33) .. (323.5,36.33) ;
\draw  [draw opacity=0][dash pattern={on 4.5pt off 4.5pt}] (191.46,89.87) .. controls (195.74,90.82) and (199.63,93.69) .. (201.83,98.02) .. controls (203.72,101.75) and (203.99,105.85) .. (202.91,109.48) -- (188.86,104.6) -- cycle ; \draw  [dash pattern={on 4.5pt off 4.5pt}] (191.46,89.87) .. controls (195.74,90.82) and (199.63,93.69) .. (201.83,98.02) .. controls (203.72,101.75) and (203.99,105.85) .. (202.91,109.48) ;
\draw    (69.5,111) -- (325.5,111) ;
\draw [color={rgb, 255:red, 255; green, 0; blue, 0 }  ,draw opacity=1 ][fill={rgb, 255:red, 255; green, 0; blue, 0 }  ,fill opacity=1 ]   (176.5,111) -- (216.42,49.01) ;
\draw [shift={(217.5,47.33)}, rotate = 482.78] [color={rgb, 255:red, 255; green, 0; blue, 0 }  ,draw opacity=1 ][line width=0.75]    (10.93,-3.29) .. controls (6.95,-1.4) and (3.31,-0.3) .. (0,0) .. controls (3.31,0.3) and (6.95,1.4) .. (10.93,3.29)   ;
\draw [color={rgb, 255:red, 255; green, 0; blue, 0 }  ,draw opacity=1 ]   (176.5,111) -- (84.5,111.33) ;
\draw [shift={(82.5,111.33)}, rotate = 359.8] [color={rgb, 255:red, 255; green, 0; blue, 0 }  ,draw opacity=1 ][line width=0.75]    (10.93,-3.29) .. controls (6.95,-1.4) and (3.31,-0.3) .. (0,0) .. controls (3.31,0.3) and (6.95,1.4) .. (10.93,3.29)   ;
\draw [color={rgb, 255:red, 255; green, 0; blue, 0 }  ,draw opacity=1 ]   (176.5,111) -- (231.5,111.32) ;
\draw [shift={(233.5,111.33)}, rotate = 180.34] [color={rgb, 255:red, 255; green, 0; blue, 0 }  ,draw opacity=1 ][line width=0.75]    (10.93,-3.29) .. controls (6.95,-1.4) and (3.31,-0.3) .. (0,0) .. controls (3.31,0.3) and (6.95,1.4) .. (10.93,3.29)   ;

\draw (241,121) node [anchor=north west][inner sep=0.75pt]   [align=left] {solid};
\draw (280,67) node [anchor=north west][inner sep=0.75pt]   [align=left] {liquid};
\draw (109,44) node [anchor=north west][inner sep=0.75pt]   [align=left] {gas};
\draw (188,96.4) node [anchor=north west][inner sep=0.75pt]  [font=\footnotesize]  {${\textstyle \theta }$};
\draw (86,90.4) node [anchor=north west][inner sep=0.75pt]  [font=\small]  {$\gamma _\mathrm{gs}$};
\draw (190,45.4) node [anchor=north west][inner sep=0.75pt]  [font=\small]  {$\gamma _\mathrm{gl}$};
\draw (221,91.4) node [anchor=north west][inner sep=0.75pt]  [font=\small]  {$\gamma _\mathrm{ls}$};

\end{tikzpicture}

\caption{Surface tensions acting on a liquid at the triple junction.}
    \label{fig:young's_equation}
\end{figure}
Young's law (cf.~\cite{BEIMR})
\begin{equation}\label{young}
    \gamma_\mathrm{gs}=\gamma_\mathrm{ls}+\cos(\theta)\gamma_\mathrm{gl}
\end{equation}
gives the relation between the microscopic contact angle $\theta$ and the surface tensions $\gamma_\mathrm{gs}$, $\gamma_\mathrm{ls}$, and $\gamma_\mathrm{gl}$ between gas and solid, liquid and solid, and gas and liquid, respectively. If $\gamma_\mathrm{gs} < \gamma_\mathrm{ls} + \gamma_\mathrm{gl}$, then $\theta > 0$ (nonzero contact angle), a global equilibrium can be attained, and the liquid thin film is said to partially wet the solid. If on the other hand $\gamma_\mathrm{gs} \ge \gamma_\mathrm{ls}+\gamma_\mathrm{gl}$, then $\theta = 0$ (zero contact angle), a global equilibrium is not attained, and the thin film eventually covers the entire solid (complete-wetting regime).

\medskip

While microscopically Young's law \eqref{young} applies, the apparent macroscopic contact angle is dynamic and in general depends on the flow (for instance through the velocity at the contact line, cf.~\cite{Shikhmurzaev2020} and references therein). The difference is schematically visualized in Figure~\ref{fig:mac_mic}.
\begin{figure}[H]
\begin{subfigure}[t]{.5\textwidth}
  \centering

\tikzset{every picture/.style={line width=0.75pt}}

\begin{tikzpicture}[x=0.75pt,y=0.75pt,yscale=-1,xscale=1]

\draw  [draw opacity=0][fill={rgb, 255:red, 208; green, 205; blue, 205 }  ,fill opacity=1 ] (12.5,138) -- (246.5,138) -- (246.5,170) -- (12.5,170) -- cycle ;
\draw [color={rgb, 255:red, 74; green, 144; blue, 226 }  ,draw opacity=1 ]   (49.5,138) .. controls (132.5,48) and (248.5,66) .. (238.5,63) ;
\draw  [draw opacity=0][dash pattern={on 4.5pt off 4.5pt}] (64.46,116.87) .. controls (68.74,117.82) and (72.63,120.69) .. (74.83,125.02) .. controls (77.22,129.73) and (77.03,135.04) .. (74.78,139.24) -- (61.86,131.6) -- cycle ; \draw  [dash pattern={on 4.5pt off 4.5pt}] (64.46,116.87) .. controls (68.74,117.82) and (72.63,120.69) .. (74.83,125.02) .. controls (77.22,129.73) and (77.03,135.04) .. (74.78,139.24) ;
\draw [color={rgb, 255:red, 254; green, 1; blue, 1 }  ,draw opacity=1,  dashed]   (126.5,32.13) -- (48.5,139) ;
\draw    (11.5,139) -- (246.5,139) ;

\draw (114,148) node [anchor=north west][inner sep=0.75pt]   [align=left] {solid};
\draw (168,98) node [anchor=north west][inner sep=0.75pt]   [align=left] {liquid};
\draw (34,60) node [anchor=north west][inner sep=0.75pt]   [align=left] {gas};
\draw (61,124.6) node [anchor=north west][inner sep=0.75pt]   [align=left] {{\scriptsize K}};

\end{tikzpicture}

  \caption{Schematic of the apparent macroscopic contact angle $K$.}
  \label{Fig:macroscopic}
\end{subfigure}
\begin{subfigure}[t]{.5\textwidth}
  \centering

\tikzset{every picture/.style={line width=0.75pt}}

\begin{tikzpicture}[x=0.75pt,y=0.75pt,yscale=-1,xscale=1]

\draw  [draw opacity=0][fill={rgb, 255:red, 208; green, 205; blue, 205 }  ,fill opacity=1 ] (28.5,123) -- (262.5,123) -- (262.5,155) -- (28.5,155) -- cycle ;
\draw [color={rgb, 255:red, 74; green, 144; blue, 226 }  ,draw opacity=1 ]   (61.5,122.67) .. controls (144.5,116.93) and (135.5,31.27) .. (258.5,20.6) ;
\draw [color={rgb, 255:red, 254; green, 1; blue, 1 }  ,draw opacity=1,  dashed]    (61.5,122.67) .. controls (115.5,125.93) and (180.5,31.27) .. (180.5,31.27) ;
\draw    (28.5,123) -- (263.5,123) ;
\draw  [draw opacity=0][dash pattern={on 4.5pt off 4.5pt}] (89.54,116.52) .. controls (89.48,118.89) and (88.87,121.2) .. (87.78,123.24) -- (74.86,115.6) -- cycle ; \draw  [dash pattern={on 4.5pt off 4.5pt}] (89.54,116.52) .. controls (89.48,118.89) and (88.87,121.2) .. (87.78,123.24) ;
\draw  [draw opacity=0][dash pattern={on 4.5pt off 4.5pt}] (131.33,88.09) .. controls (137.94,88.85) and (144.6,93.64) .. (148.4,101.11) .. controls (152.23,108.67) and (152.11,116.98) .. (148.71,122.78) -- (132.75,109.05) -- cycle ; \draw  [dash pattern={on 4.5pt off 4.5pt}] (131.33,88.09) .. controls (137.94,88.85) and (144.6,93.64) .. (148.4,101.11) .. controls (152.23,108.67) and (152.11,116.98) .. (148.71,122.78) ;

\draw (130,133) node [anchor=north west][inner sep=0.75pt]   [align=left] {solid};
\draw (184,83) node [anchor=north west][inner sep=0.75pt]   [align=left] {liquid};
\draw (50,45) node [anchor=north west][inner sep=0.75pt]   [align=left] {gas};
\draw (134,98.6) node [anchor=north west][inner sep=0.75pt]   [align=left] {{\scriptsize K}};
\draw (95.78,112.24) node [anchor=north west][inner sep=0.75pt]   [align=left] {{\scriptsize k}};

\end{tikzpicture}

  \caption{The previous schematic plot zoomed in near the triple junction. The macroscopic contact angle $K$ and the microscopic contact angle $k$ are shown. The traveling wave is depicted as a dashed line.}
  \label{fig:microscopic}
\end{subfigure}
\caption{ }
\label{fig:mac_mic}
\end{figure}
The main purpose of this note is to investigate the relation between the microscopic and macroscopic contact angle $k$ and $K$, respectively, in the regime of quasi-static motion, where $K$ meets the Cox-Voinov law \cite{Cox1986, Tanner1979, Hocking1992, voinov1977} in an intermediate asymptotic regime which needs to be matched to the bulk solution. This justifies the use of a traveling-wave ansatz, which only captures two asymptotic regimes (Young's and the Cox-Voinov law) and is further explained in \S\ref{subsect_CV}. We expect that this behavior is generic, that is, general solutions exhibit the same behavior in corresponding asymptotic regimes, depending on which addend in the mobility dominates the dynamics. The matched asymptotic expansions of Cox \cite{Cox1986} indicate that the same behavior is to be expected for Stokes flow. Note that significant deviations from the behavior characterized in what follows can be expected if the initial datum dominates the qualitative behavior (see for instance waiting-time phenomena investigated in \cite{ChipotSideris1985,DalPassoGiacomelliGruen2001,DalPassoGiacomelliGruen2003,GiacomelliGruen2006,Fischer2014,FischerMatthes2021} in the complete-wetting regime and references therein), or if the film thickness decreases below the slip length $\lambda$, so that the term $h^n$ in \eqref{TFE_PDE} is dominating (see for instance self-similar asymptotics investigated in \cite{CarrilloToscani2002,BernoffWitelski2002,CarlenUlusoy2007,CarlenUlusoy2014,Gnann2015,Seis2018} in the complete-wetting regime  and references therein). Additionally note that for very thin films (at the order of only a few fluid molecules thickness), thermal fluctuations modelled by an additional stochastic forcing play a role (see \cite{DavidovitchMoroStone2005,GruenMeckeRauscher2006}, where the corresponding stochastic thin-film equation was proposed first). Rigorous analytic results on the latter model can be found in \cite{FischerGruen2018,GessGnann2020,DareiotisGessGnannGruen2021,GruenKlein2021,MetzgerGruen2021,Sauerbrey2021}.

\section{Setting and main result}\label{chapter3}
In this section, the ordinary boundary-value problem describing the traveling wave is formulated and suitably transformed. Afterwards our main theorem is stated. Note that the transformations presented in the sequel are similar to those used in \cite[\S1]{GGO}, where complete-wetting boundary conditions have been treated.

\subsection{The traveling-wave problem of the thin-film equation}
Using the travling-wave ansatz $h = H$, where $H$ only depends on $x = y + Vt$ and $V$ is the constant and finite velocity of the film, and assuming that $\rbar{Y}{t = 0} = 0$ by translation invariance, the above problem \eqref{TFE} can be rewritten in terms of the third-order ordinary differential equation (ODE)
\begin{subequations}\label{EQ_3.2}
\begin{align}
(H^2 +\lambda^{3-n}H^{n-1})\tfrac{\d^3 H}{\d x^3} &= -V && \text{in } (0,\infty) \label{TFE_unscaled}
\end{align}
with boundary conditions
\begin{align}
    H &= 0 && \text{at } x=0, \label{BC_TFE_unscaled_1} \\
    \tfrac{\d H}{\d x} &= k && \text{at } x=0, \label{BC_TFE_unscaled_2} \\
    (H^2 +\lambda^{3-n}H^{n-1})\tfrac{\d^3 H}{\d x^3} &= -V && \text{at } x = 0. \label{BC_TFE_unscaled_3}
\end{align}
\end{subequations}
Indeed, the boundary conditions \eqref{BC_TFE_unscaled_1}, \eqref{BC_TFE_unscaled_2}, and \eqref{BC_TFE_unscaled_3} follow trivially from the boundary conditions \eqref{BC:1}, \eqref{BC:2}, and \eqref{BC:3}, respectively. Furthermore, the partial differential equation (PDE) \eqref{TFE_PDE} turns into the ODE
\begin{align*}
V \tfrac{\d H}{\d x} + \tfrac{\d}{\d x}\left((H^3+\lambda^{3-n}H^n)\tfrac{\d^3H}{\d x^3}\right) &= 0 && \text{in } (0,\infty).
\end{align*}
Integrating in $x$ leads to
\begin{align*}
V H + (H^3+\lambda^{3-n}H^n)\tfrac{\d^3H}{\d x^3} &= c && \text{in } (0,\infty),
\end{align*}
where $c$ is a constant. The boundary conditions \eqref{BC_TFE_unscaled_1} and \eqref{BC_TFE_unscaled_3} entail $c = 0$, so that \eqref{TFE_unscaled} is obtained by dividing through $H$.

\medskip

Under the additional assumption of vanishing curvature in the bulk, that is,
\begin{subequations}[resume]
\begin{align}
\tfrac{\d^2H}{\d x^2} &\to 0 &&\text{as } x\to\infty.\label{BC_TFE_unscaled_4}
\end{align}
\end{subequations}
Chiricotto and Giacomelli have found in \cite{CG} that the boundary-value problem \eqref{TFE_unscaled} for $n = 2$ has a unique classical solution $H = H_\mathrm{CG}$ which is three times continuously differentiable in $x > 0$ with $H_\mathrm{CG}$ and $\frac{\d H_\mathrm{CG}}{\d x}$ continuous in $x \ge 0$. Their reasoning also applies to $n \in (0,3)$, which is why we can assume from hereon that a unique $H = H_\mathrm{CG}$ solving \eqref{EQ_3.2} for $n \in (0,3)$ exists. For the reader's convenience, we give a streamlined version of the existence and uniqueness proof in \cite{CG} in a different set of variables in Theorem~\ref{th:ex_un} in Appendix~\ref{app:ex_un}.

\medskip

Note that by applying the scalings
\begin{equation}\label{rescaling_x,H}
H \mapsto \lambda H, \qquad x \mapsto \left(3V\right)^{-\frac{1}{3}} \lambda x, \qquad \text{and} \qquad k \mapsto \left(3V\right)^{\frac 1 3} k,
\end{equation}
we may without loss of generality assume $\lambda = 1$ and $V=\frac{1}{3}$, so that equations~\eqref{EQ_3.2} turn into finding $H$ such that
\begin{subequations}\label{scaled_partial_wetting}
\begin{align}
    \left(H^2 +H^{n-1}\right) \tfrac{\d^3 H}{\d x^3} &=-\tfrac{1}{3} && \text{for $x>0$}, \label{diff_scaled}\\
    H &= 0 && \text{at $x=0$}, \label{bc_partial_wetting_1}\\
    \tfrac{\d H}{\d x} &= k && \text{at $x=0$}, \label{scaled_angle} \\
    \tfrac{\d^2 H}{\d x^2} &\to 0 && \text{as $x \to \infty$},\label{bc_partial_wetting_2}
\end{align}
\end{subequations}
which is uniquely solved by $H = H_\mathrm{CG}$.

\subsection{The Cox-Voinov law}\label{subsect_CV}
Recall that we have chosen $n \in (0,3)$, so that as $x \to \infty$ the term $H^{2}$ dominates $H^{n-1}$ in equation~\eqref{diff_scaled}. This is why the expected behavior of the differential equation \eqref{diff_scaled} is determined by
\begin{align}\label{diff_to_infty}
    H^2 \tfrac{\d^3 H}{\d x^3} &= - \tfrac 1 3 && \mbox{as $x \to \infty$.}
\end{align}
Then, it can be easily recognized that \eqref{diff_to_infty} is approximately solved by the asymptotic
\begin{align}\label{tanner_scaled}
    H &= x(\ln x)^{\frac{1}{3}}(1+\landau(1)) && \text{as $x\to\infty$}.
\end{align}
In fact, an implicit solution of \eqref{diff_to_infty} in terms of Airy functions was found by Duffy and Wilson in \cite{DuffyWilson1997}, from which the asymptotic \eqref{tanner_scaled} can be derived. Formally differentiating \eqref{tanner_scaled} with respect to $x$, raising it to the power of three, and reverting the normalization of the speed $V$ gives
\begin{align}\label{Tanner}
    \left(\tfrac{\d H}{\d x}\right)^3 &= 3V \ln x (1+\landau(1)) && \text{as $x\to\infty$}.
\end{align}
Again, we note that equation~\eqref{Tanner} can be made rigorous using \cite{DuffyWilson1997}. Because the lubrication approximation assumes small slopes, $\frac{\d H}{\d x}$ is in this approximation, as $x \to \infty$, equal to the macroscopic contact angle. Hence, this asymptotic implies that the cube of the macroscopic contact angle is, up to a logarithmic correction, proportional to the speed of the free boundary. This will be referred to as \emph{the Cox-Voinov law} \cite{Cox1986, voinov1977}, in what follows, though the relation between microscopic and macroscopic contact angle has been analyzed also by Tanner \cite{Tanner1979} and Hocking \cite{Hocking1983}. Corresponding rigorous results regarding intermediate-in-time asymptotics, known as Tanner's law \cite{Tanner1979}, can be found in \cite{GiacomelliOtto2002,DelgadinoMellet2021}.

\medskip

Note that the subsequent results are limited since we are considering a droplet that infinitely extends to $x \to \infty$. In realistic situations, the apparent/macroscopic contact angle can be measured at an inflection point close to the contact line (point of maximum slope, see \cite{Tanner1979}). Thus, the Cox-Voinov law is only an intermediate asymptotic and needs to be matched to a bulk solution (see \cite{EggersStone2004} for matched-asymptotics arguments). Carrying this out rigorously is rather delicate and exceeds the presentation of this note.

\medskip

For the subsequent results, it is important to note that the solution to \eqref{diff_to_infty} is invariant under translation in $x$, that is, replacement of $x \mapsto x + c$ for any $c \in \R$, and the scaling transformation $(x,H) \mapsto (B x, B H)$ for any $B > 0$, which leads to a two-parameter family of solutions meeting the asymptotic \eqref{tanner_scaled}. The translation invariance will be removed by a suitable coordinate transformation in the following section. The remaining parameter $B$ will be used in order to rigorously match the asymptotic \eqref{Tanner} to the microscopic Young angle $k$ of the unique classical solution to \eqref{scaled_partial_wetting}. The precise mathematical result is given in Theorem~\ref{main_thm} in \S\ref{sec:main} below.
 
\subsection{Coordinate transformation}
Obviously, equation~\eqref{diff_scaled} is translation-invariant in $x$.
For the classical solution $H = H_\mathrm{CG}$ of problem~\eqref{scaled_partial_wetting}, we also have the following properties:
\begin{enumerate}
    \item\label{it:hg0} It holds $H_\mathrm{CG}>0$ for all $x>0$. This is true because $H_\mathrm{CG} > 0$ for $0 < x \ll 1$ due to \eqref{scaled_angle} and $k > 0$. On the other hand, continuity of $H_\mathrm{CG}$ and $\frac{\d^3 H_\mathrm{CG}}{\d x^3}$, and \eqref{diff_scaled} prevent $H_\mathrm{CG}$ from becoming zero, which yields $H_\mathrm{CG} > 0$ for all $x > 0$.
    \item\label{it:d3hl0} We have $\frac{\d^3 H_\mathrm{CG}}{\d x^3} < 0$ for all $x>0$ by \eqref{diff_scaled} and \eqref{it:hg0}.
    \item\label{it:d2hl0} We get $\frac{\d^2 H_\mathrm{CG}}{\d x^2} >0 $ for all $x>0$ by \eqref{bc_partial_wetting_2} and \eqref{it:d3hl0}.
    \item We have $\frac{\d H_\mathrm{CG}}{\d x} >0$ for all $x>0$ by  \eqref{bc_partial_wetting_1}, $k > 0$, and \eqref{it:d2hl0}.
\end{enumerate}
The above shows that $H_\mathrm{CG}$ is a strictly increasing function, so that \eqref{diff_scaled} can be rewritten in terms of $x = x_\mathrm{CG}$ as a function of $H$, thus removing the translation invariance in $x$ and leading to a second-order ODE instead of the third-order ODE \eqref{diff_scaled}. This equation, however, includes $x_\mathrm{CG}$, $\frac{\d x_\mathrm{CG}}{\d H}$, and $\frac{\d^2 x_\mathrm{CG}}{\d H^2}$, which makes it inconvenient for a monotonicity argument. Instead, we opt for the choice
\begin{equation}\label{coordinate_transform}
    \psi := \left(\tfrac{\d H}{\d x}\right)^2 = \left(\tfrac{\d x}{\d H}\right)^{-2} > 0 \text{ as a function of } H
\end{equation}
in what follows. Then, problem~\eqref{scaled_partial_wetting} turns into finding $\psi$ such that
\begin{subequations}\label{main_sys}
\begin{align}\label{main_ode}
    \tfrac{\d^2\psi}{\d H^2} + \tfrac{2}{3}(H^2+H^{n-1})^{-1} \psi^{-\frac{1}{2}} &= 0 && \text{for $H>0$},
\end{align}
\end{subequations}
where the boundary conditions are given by
\begin{subequations}[resume]
\begin{align}
    \psi &= k^2 &&\text{at $H=0$}, \label{BC_main_sys_1}\\
    \tfrac{\d\psi}{\d H} &\to 0 &&\text{as $H \to \infty$.} \label{BC_main_sys_2}
\end{align}
\end{subequations}
Indeed, we have
\begin{align*}
\tfrac{\d\psi}{\d H} \stackrel{\eqref{coordinate_transform}}{=} 2 \tfrac{\d H}{\d x} \tfrac{\d^2H}{\d x^2} \tfrac{\d x}{\d H} = 2 \tfrac{\d^2 H}{\d x^2}, \qquad
\tfrac{\d^2\psi}{\d H^2} = 2 \tfrac{\d^3H}{\d x^3} \tfrac{\d x}{\d H} \stackrel{\eqref{coordinate_transform}}{=} 2 \tfrac{\d^3H}{\d x^3} \psi^{-\frac 1 2},
\end{align*}
and thus
\[
\tfrac{\d^2\psi}{\d H^2} \stackrel{\eqref{diff_scaled}}{=} - \tfrac 2 3 \left(H^2+H^{n-1}\right)^{-1} \psi^{-\frac 1 2},
\]
which yields \eqref{main_ode}. On the other hand, the boundary conditions \eqref{BC_main_sys_1} and \eqref{BC_main_sys_2} follow directly from the definition of $\psi$ in \eqref{coordinate_transform} and the boundary conditions of \eqref{bc_partial_wetting_1}--\eqref{bc_partial_wetting_2}. The main result of Chiricotto and Giacomelli in \cite{CG} implies that \eqref{main_sys} has a unique classical solution $\psi = \psi_\mathrm{CG}$ being twice continuously differentiable in $H > 0$ and right-continuous at $H = 0$. The result and proof generalized to $n \in (0,3)$ and adapted to the system \eqref{main_sys} can be found in Appendix~\ref{app:ex_un}, Theorem~\ref{th:ex_un}.

\subsection{The Cox-Voinov law in new coordinates\label{sec:tanner_psi}}
With help of \eqref{coordinate_transform}, the leading-order equation \eqref{diff_to_infty} can now be rephrased as
\begin{subequations}\label{problem_tanner}
\begin{align}\label{diff_to_infty_psi}
    \tfrac{\d^2\psi}{\d H^2} + \tfrac{2}{3} H^{-2} \psi^{-\frac 1 2} &= 0 && \text{for large $H>0$}
\end{align}
with Cox-Voinov asymptotic
\begin{align}\label{tanner_scaled_psi}
    \psi &= (\ln H)^{\frac 2 3}(1+\landau(1)) && \text{as $H\to\infty$}.
\end{align}
The family of solutions to \eqref{diff_to_infty_psi} meeting \eqref{tanner_scaled_psi} is now one-parametric because of the scaling invariance $H \mapsto B H$ for any $B > 0$. It is proved in \cite[Proposition~3.1]{GGO} that problem~\eqref{diff_to_infty_psi} has a unique solution $\psi = \psi_\mathrm{CV}$ being twice continuously differentiable for $H > 0$ large if we additionally demand the refined asymptotic
\begin{align}\label{tanner_scaled_psi_t}
    \psi^{\frac 3 2} &= \ln H - \tfrac 1 3 \ln\left(\ln H\right) + \landau(1) && \text{as $H\to\infty$}.
\end{align}
\end{subequations}
We select this solution $\psi_\mathrm{CV}$ from now on.

\subsection{The main result\label{sec:main}}
The rest of this paper is devoted to proving the following result, giving a precise characterization of the asymptotic regimes as $H \to \infty$ and $H \downarrow 0$ and their dependence on the parameters $n$ (mobility exponent) and $k$ (microscopic contact angle).
\begin{theorem}\label{main_thm}
Suppose $n\in(0,3)$ and $k > 0$. The unique solution $\psi = \psi_\mathrm{CG}$ to \eqref{main_sys} being twice continuously differentiable in $H > 0$ and right-continuous at $H = 0$, has the following asymptotic regimes:
\begin{enumerate}
\item There exists a real parameter $B > 0$ and a function $R_\infty$ of $H$ such that
\begin{align}\label{main_thm_eq1}
    \psi_\mathrm{CG} &= \rbar{\psi_\mathrm{CV}}{H \mapsto BH} (1+R_\infty) && \mbox{for } H > 0 \mbox{ sufficiently large},
\end{align}
where $C > 0$ is a constant, $\psi_\mathrm{CV}$ is chosen as in \S\ref{sec:tanner_psi}, and
\begin{align*}
    R_\infty &= \order\left((\ln(H))^{-1}H^{-(3-n)}\right) && \text{as $H\to\infty$}.
\end{align*}
The parameter $B$ and the correction $R_\infty$ are continuously differentiable functions of $k > 0$.
\item It holds
\begin{align}\label{main_thm_eq2}
    \psi_\mathrm{CG} &= k^2(1+\mu) && \text{as $H\downarrow0$},
\end{align}
where $\mu$ has the following properties:
\begin{itemize}
\item[i)] For $n \in (0,3) \setminus \left\{3-\frac 1 m \colon m \in \N\right\}$ (non-resonant case) it holds $\mu = \rbar{v}{(\zeta,\varrho) = \left(H,H^{3-n}\right)}$ as $H \downarrow 0$, where $v$ is analytic in $(\zeta,\varrho)$ around $(\zeta,\varrho) = (0,0)$ and smooth in $k > 0$ with $\rbar{v}{(\zeta,\varrho) = (0,0)} = 0$.
\item[ii)] For $n = 3-\frac 1 m$ with $m \in \N$ (resonant case) it holds $\mu = \rbar{v}{(\zeta,\varrho,\sigma) = \left(H,H^{3-n},H \ln H\right)}$ as $H \downarrow 0$, where $v$ is analytic in $(\zeta,\varrho,\sigma)$ around $(\zeta,\varrho,\sigma) = (0,0,0)$ and smooth in $k > 0$ with $\rbar{v}{(\zeta,\varrho,\sigma) = (0,0,0)} = 0$.
\end{itemize}
\end{enumerate}
\end{theorem}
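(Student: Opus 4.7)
The plan is to attack the two asymptotic regimes separately by recasting the second-order ODE \eqref{main_ode} as an autonomous finite-dimensional dynamical system near each boundary and applying invariant-manifold and normal-form theory. For $H\downarrow 0$, I would introduce the logarithmic time $t=\ln H$ and the auxiliary variable $\sigma = H^{3-n}$, and work with the unknown $\mu = \psi/k^2 - 1$. This turns \eqref{main_ode} into an autonomous three-dimensional system in $(\mu,\dot\mu,\sigma)$, where the dot denotes $d/dt$, whose linearization at the equilibrium $(0,0,0)$ has spectrum $\{0,1,3-n\}$. The trajectory corresponding to $\psi_\mathrm{CG}$ lies on the two-dimensional unstable manifold associated with the positive eigenvalues (which contracts as $t\to-\infty$); the neutral direction of the zero eigenvalue is excluded by the boundary condition $\mu|_{H=0}=0$ from \eqref{BC_main_sys_1}.

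For part~(2)(i), I would invoke Poincar\'e's analytic linearization theorem on this unstable manifold. The two positive eigenvalues $\lambda_1=1$ and $\lambda_2=3-n$ admit no nontrivial resonance of the form $\lambda_i=a\lambda_1+b\lambda_2$ with non-negative integers $a,b$ satisfying $a+b\geq 2$ precisely when $1\neq m(3-n)$ for every $m\in\N$, i.e.\ when $n\notin\{3-1/m\colon m\in\N\}$. In that case the unstable manifold admits an analytic parameterization in the eigencoordinates $s\propto e^{t}\propto H$ and $\tau\propto e^{(3-n)t}\propto H^{3-n}$; reading $\mu$ off this parameterization yields the analytic function $v(\zeta,\varrho)$ in $\zeta=H$ and $\varrho=H^{3-n}$ with $v(0,0)=0$. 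For part~(2)(ii), the resonance $1=m(3-n)$ (including the Jordan-block case $m=1$, $n=2$, where the two eigenvalues coincide) obstructs analytic linearization; solving the resonant homological equation introduces a logarithmic contribution of type $\tau^m\ln s$, equivalent to $H\ln H$, and a Poincar\'e--Dulac-type normal form augmented by this single logarithmic generator yields analyticity of $v$ in the three variables $(\zeta,\varrho,\sigma)=(H,H^{3-n},H\ln H)$.

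For part~(1), I would compare $\psi_\mathrm{CG}$ with the scaled Cox--Voinov profile. Inserting the ansatz $\psi_\mathrm{CG}=\rbar{\psi_\mathrm{CV}}{H\mapsto BH}(1+R_\infty)$ into \eqref{main_ode} and using that $\rbar{\psi_\mathrm{CV}}{H\mapsto BH}$ satisfies \eqref{diff_to_infty_psi} produces a linear inhomogeneous second-order ODE for $R_\infty$ whose inhomogeneity carries the difference $(H^2+H^{n-1})^{-1}-H^{-2}$, which decays like a negative power of $H$. Combined with the refined asymptotic $\psi_\mathrm{CV}^{3/2}=\ln H-\tfrac{1}{3}\ln\ln H+\order(1)$ from \eqref{tanner_scaled_psi_t}, a variation-of-parameters analysis of this linear equation on a neighborhood of $\infty$ delivers the claimed remainder $R_\infty=\order((\ln H)^{-1}H^{-(3-n)})$. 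The scaling parameter $B=B(k)$ is selected by matching the trajectory leaving the unstable manifold at $H=0$ with the one-parameter family of Cox--Voinov-type solutions at $H=\infty$; continuously differentiable dependence of $B$ and $R_\infty$ on $k$ then follows from smooth parameter dependence of the two invariant manifolds together with an implicit function theorem based on transversality of their intersection.

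I expect the main obstacles to be the resonant case in part~(2)(ii) and the matching argument for part~(1): in the former, one must propagate the single logarithmic generator $H\ln H$ through the formal expansion without producing higher powers of $\ln H$ and prove convergence of the resulting three-variable series; in the latter, verifying that the shooting from $H=0$ to $H=\infty$ is transversal is precisely what upgrades the bare existence of $B$ to its $C^{1}$-dependence on the microscopic contact angle $k$.
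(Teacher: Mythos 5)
Your overall strategy — recast \eqref{main_ode} near each endpoint as an autonomous dynamical system, construct invariant manifolds, and match them via a transversality argument combined with the implicit-function theorem — matches the paper's. The execution near $H\downarrow0$, however, differs in two respects that create gaps in your argument.

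First, your three-dimensional system in $(\mu,\dot\mu,\sigma)$ is non-hyperbolic: the Jacobian at the origin has spectrum $\{0,1,3-n\}$, and in fact your system possesses an entire line of equilibria $(\mu_0,0,0)$, since setting $\sigma=\dot\mu=0$ annihilates the right-hand side for any $\mu_0$. The paper's choice of variables $r=e^{(3-n)s/3}$, $q=e^{-(3-n)s/3}\mu$, $p=e^{-(3-n)s/3}\dot\mu$ is designed precisely to avoid this: the rescaling converts the spectrum to $\left\{\tfrac{3-n}{3},-\tfrac{3-n}{3},\tfrac{n}{3}\right\}$, making the origin the \emph{unique} and \emph{hyperbolic} fixed point, so the analytic unstable-manifold theorem applies off the shelf. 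In your setup you must instead argue about a strong unstable manifold of a degenerate equilibrium embedded in a family of equilibria and separately invoke the boundary condition to pin down the equilibrium; this is repairable but delicate, and the paper explicitly flags this pitfall.

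Second, your resonance analysis is incomplete in a way that coincidentally does not affect the conclusion but must be addressed. You consider resonances $\lambda_i=a\lambda_1+b\lambda_2$ and claim they occur ``precisely when $1=m(3-n)$.'' But for $i=2$ the relation $\lambda_2=a\lambda_1$ with $a=2$ (i.e.\ $3-n=2$, so $n=1$) is a genuine Poincar\'e--Dulac resonance which your criterion omits. It does not obstruct linearization here, because the $\sigma$-component $\dot\sigma=(3-n)\sigma$ is \emph{already exactly linear} and never needs normalization, so the only relevant homological obstruction is in the $\mu$-equation and involves only $\lambda_1=1$ on the left. This is exactly what the paper's scalar reformulation \eqref{first_order_mu} makes manifest: the only vanishing divisor $j+(3-n)\ell-1$ (with $(j,\ell)\ne(1,0)$) occurs at $\ell=m$, $j=0$, i.e.\ $n=3-\tfrac1m$. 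You need to state this structural reason, not just the numerology.

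Two further, more minor points: in part (1) the equation for $R_\infty$ obtained by substituting the ansatz into \eqref{main_ode} is nonlinear (the $\psi^{-1/2}$ term does not linearize exactly), so a fixed-point or Gr\"onwall-type closure is needed on top of variation of parameters — this is precisely the content of Proposition~\ref{prop:manifold_bulk}. And invoking Poincar\'e's theorem gives analyticity in $(\zeta,\varrho)$ for each fixed $k$, but the theorem does not by itself yield smooth dependence of the conjugating map on the parameter $k$; the paper avoids this by formulating Propositions~\ref{prop:nonresonant} and~\ref{prop:resonant} directly as an implicit-function-theorem problem in a Banach space of analytic germs, which delivers both the analyticity in $(\zeta,\varrho)$ (resp.\ $(\zeta,\varrho,\sigma)$) and $C^\infty$-dependence on $k$ in one stroke.
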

We emphasize that Theorem~\ref{main_thm} is the analogue of \cite[Theorem~2.1]{GGO} in which complete-wetting boundary conditions ($k = 0$) are studied. The asymptotic \eqref{main_thm_eq1} of Theorem~\ref{main_thm} contains information on the apparent (macroscopic) contact angle. Indeed, because the parameter $B$ and the remainder $R_\infty$ depend continuously differentiably on the microscopic contact angle $k > 0$, we obtain from \eqref{coordinate_transform}, \eqref{tanner_scaled_psi_t}, and \eqref{main_thm_eq1} that
\begin{align*}
\left(\tfrac{\d H}{\d x}\right)^3 &= \ln(B H) - \tfrac 1 3 \ln\left(\ln H\right) + \landau(1) && \mbox{as } H \to \infty,
\end{align*}
where $B > 0$ and $\landau(1)$ depend continuously differentiably on $k > 0$. This separable ODE yields
\begin{align*}
H &= x \left(\ln(Bx)\right)^{\frac 1 3} \left(1+\landau(1)\right) && \mbox{as } x \to \infty,
\end{align*}
so that we obtain
\begin{align*}
\left(\tfrac{\d H}{\d x}\right)^3 &= \ln(B x) + \landau(1) && \mbox{as } x \to \infty,
\end{align*}
which after undoing the scalings \eqref{rescaling_x,H} yields
\begin{align*}
\left(\tfrac{\d H}{\d x}\right)^3 &= 3 V \ln\left(B (3V)^{\frac 1 3} \lambda^{-1} x\right) + \landau(1) && \mbox{as } x \to \infty,
\end{align*}
where $B > 0$ and $\landau(1)$ depend in all instances continuously differentiably on $k > 0$. In conclusion, we have shown that the macroscopic contact angle depends continuously differentiably on the microscopic contact angle and thus by Young's law \eqref{young} on the physically adjustable surface tensions acting at the interfaces. This is the novelty compared to \cite{GGO}, where $k = 0$ was considered and the dependence of the asymptotic as $H \to \infty$ on the parameter $n \in \left(\frac 3 2, \frac 7 3\right)$ (mobility exponent) was studied. Further note that Eggers in \cite{Eggers2004} has studied the same problem and by matched asymptotics has determined an expansion of $B$ in terms of the inverse of a rescaled capillary number (proportional to the velocity $V$ of the contact line divided by the cube $k^3$ of the microscopic contact angle). Our result provides a rigorous justification of an existence of such an expansion to leading order. Further note that we strongly believe that the arguments provided in the present note can be lifted to prove smoothness of $B$ and $R_\infty$ in Theorem~\ref{main_thm} in $k > 0$. However, this would require to revisit many of the technical steps carried out in \cite[\S5]{GGO} in order to prove smoothness in $B > 0$ of the solution manifold meeting the Cox-Voinov law, characterized in \cite[Proposition~3.1]{GGO} (Proposition~\ref{prop:manifold_bulk} in this note), while not providing any significantly new mathematical insights.

\medskip

The asymptotics \eqref{main_thm_eq2}, on the other hand, give us information about the behavior of the solution close to the contact line (microscopic regime). We recognize that the value of $\psi_\mathrm{CG}$ as $H\downarrow 0$ is equal to $k^2$ with a precisely characterized correction continuously differentiably depending on $k > 0$. In particular, on noting that $\frac{\d^2 H_\mathrm{CG}}{\d x^2}$ gives up to a constant the pressure at the interface (it is proportional to the curvature which in lubrication approximation is merely the second derivative of the profile in the spatial variable), the derivative $\frac{\d \psi_\mathrm{CG}}{\d H}$ gives up to a constant the pressure, that is, we obtain the singularity
\begin{align*}
\tfrac{\d\psi_\mathrm{CG}}{\d H} &= k^2 \rbar{\partial_\zeta v}{(\zeta,\varrho) = \left(H,H^{3-n}\right)} + (3-n) k^2 \rbar{\partial_\varrho v}{(\zeta,\varrho) = \left(H,H^{3-n}\right)} H^{2-n} && \mbox{as } H \downarrow 0
\end{align*}
for $n \in (0,3) \setminus \left\{3-\frac 1 m \colon m \in \N\right\}$ and
\begin{align*}
\tfrac{\d\psi_\mathrm{CG}}{\d H} &= k^2 \rbar{\partial_\zeta v}{(\zeta,\varrho,\sigma) = \left(H,H^{3-n},H \ln H\right)} + (3-n) k^2 \rbar{\partial_\varrho v}{(\zeta,\varrho,\sigma) = \left(H,H^{3-n},H \ln H\right)} H^{2-n} \\
&\phantom{=} + k^2 \rbar{\partial_\sigma v}{(\zeta,\varrho,\sigma) = \left(H,H^{3-n},H \ln H\right)} \left(1+\ln H\right) && \mbox{as } H \downarrow 0
\end{align*}
for $n = 3 - \frac 1 m$ with $m \in \N$. Here, we have $v := b \zeta + \rbar{w}{\xi = b \zeta}$, where $b = b_\mathrm{CG} \in \R$ is a uniquely determined parameter matching the solution to the Cox-Voinov manifold characterized by the asymptotics \eqref{main_thm_eq1} and $w$ is uniquely determined in Propositions~\ref{prop:nonresonant} and \ref{prop:resonant} in \S\ref{sec:fixed} below. Similar singular expansions have been found in \cite[Theorems~3.2 and 3.3]{BGK} in case of source-type self-similar solutions with dynamic contact angle condition and in \cite{Knuepfer2011,Knuepfer2015,Knuepfer_erratum} in case of the thin-film equation with homogeneous mobility and partial-wetting boundary conditions. In case of partial wetting, we also refer to \cite{Degtyarev2017} for existence, uniqueness, and regularity in higher dimensions, to \cite{Esselborn2016,MajdoubMasmoudiTayachi2020} for existence, uniqueness, and stability, and to \cite{Otto1998,BertschGiacomelliKarali2005,Mellet2015} for existence results on weak solutions.

\subsection{Outline}
The rest of the paper is devoted to the proof of Theorem~\ref{main_thm}. This relies on one hand on a precise characterization of the solution manifold near the contact line (cf.~\S\ref{sec:contact}) using dynamical-systems techniques and the matching of this solution manifold with the solution manifold as $H \to \infty$ as characterized in \cite[Proposition~3.1]{GGO} (cf.~Proposition~\ref{prop:manifold_bulk}). This matching argument is carried out in \S\ref{sec:proof_main}. In Appendix~\ref{app:ex_un} we give a streamlined version of the existence and uniqueness proof of \cite{CG} for the system \eqref{main_sys} instead of \eqref{scaled_partial_wetting}.

\section{The solution manifold near the contact line\label{sec:contact}}
Note that the construction of a solution manifold at the contact line is in part based on the analysis in \cite[\S4.2--4.4]{BGK} in which partial-wetting boundary conditions for the source-type self-similar solution with homogeneous mobility are treated. Our reasoning is different in that we choose to study a dynamical system that is changed compared to \cite[\S4.2--4.4]{BGK} with the advantage that the contact line corresponds to a hyperbolic fixed point. Furthermore, we additionally discuss the smooth dependence on the parameter $k > 0$.

\subsection{Reformulation as a dynamical system}\label{chapter4}
In this section, a dynamical system will be formulated to characterize the error between $\psi$ solving \eqref{main_ode} and \eqref{BC_main_sys_1} and the squared microscopic contact angle $k^2$ as $H\downarrow0$.
\subsubsection{Coordinate transformations}
We first apply the coordinate transformation
\begin{subequations}\label{s_mu}
\begin{equation}\label{s_h}
s:=\ln H,
\end{equation}
which shifts the contact line $H=0$ to $s=-\infty$. Secondly, we introduce the new dependent variable $\mu$ with
\begin{equation}\label{mu}
\mu := \tfrac{\psi}{k^2}-1,
\end{equation}
\end{subequations}
determining the error between $\psi$ and $k^2$. On noting that $\frac{\d}{\d H} \stackrel{\eqref{s_h}}{=} e^{-s} \frac{\d}{\d s}$, the transformations \eqref{s_mu} turn problem~\eqref{main_sys} into
\begin{subequations}\label{mu_sys}
\begin{align}
    \tfrac{\d^2\mu}{\d s^2} - \tfrac{\d\mu}{\d s} + \tfrac{2}{3 k^3(1+e^{-(3-n)s})} \left(1+\mu\right)^{- \frac 1 2} &=0 && \text{for $s\in \R$,} \label{ODE_s} \\
    \mu &\to 0 && \mbox{as $s \to - \infty$,} \label{BC_ode_s} \\
    e^{-s} \tfrac{\d\mu}{\d s} &\to 0 && \mbox{as $s \to \infty$,} \label{BC_ode_s_inf}
\end{align}
\end{subequations}
which is uniquely solved by $\mu = \mu_\mathrm{CG}$ given by \eqref{s_mu} with $\psi = \psi_\mathrm{CG}$.

\subsubsection{The dynamical system}
Equation \eqref{ODE_s} will now be reformulated as an autonomous three-dimensional continuous dynamical system using the functions
\begin{equation}\label{def_r_q_p}
r := e^{\frac{3-n}{3} s}, \qquad q := e^{- \frac{3-n}{3} s} \mu \qquad \text{and} \qquad p := e^{- \frac{3-n}{3} s} \tfrac{\d\mu}{\d s}.
\end{equation}
If $\mu = \mu_\mathrm{CG}$ we write $(r,q,p) = \left(r_\mathrm{CG},q_\mathrm{CG},p_\mathrm{CG}\right)$. The dynamical system becomes
\begin{subequations}\label{ds_f}
\begin{equation}\label{ds}
\tfrac{\d}{\d s} \left(r, q, p\right)
= F,
\end{equation}
where
\begin{equation}\label{def_f}
F := \left(\tfrac{3-n}{3} r, - \tfrac{3-n}{3} q + p, \tfrac n 3 p - \tfrac{2}{3 k^3} \tfrac{r^2}{1+r^3} (1+r q)^{-\frac{1}{2}}\right).
\end{equation}
\end{subequations}
It can be easily verified that for our choice $n \in (0,3)$ the point $(0,0,0)$ is the unique fixed point of the system \eqref{ds}. In the next lemma we will see that any solution $(r,q,p)$, which under the transformations \eqref{def_r_q_p} meets \eqref{ODE_s} and \eqref{BC_ode_s}, converges to this fixed point as $s\to -\infty$ and we additionally characterize the asymptotic behavior.
\begin{lemma}\label{lem:conv_cg}
Suppose $k > 0$, $n \in (0,3)$, that $\mu$ is an in $s \in \R$ twice continuously differentiable solution to \eqref{ODE_s} and \eqref{BC_ode_s}, and let $(r,q,p)$ be defined by \eqref{def_r_q_p}. Then it holds
\begin{subequations}\label{conv_cg}
\begin{align}
r &= e^{\frac{3-n}{3} s} && \mbox{for all $s \in \R$,} \label{conv_rcg} \\
q &= \begin{cases} \order\left(e^{\frac n 3 s}\right) &\text{ for $0 < n < 2$}, \\
- \tfrac{2}{3 k^3} s e^{\frac n 3 s} (1+\landau(1)) &\text{ for $n=2$}, \\
\tfrac{2}{3 (3-n) (n-2) k^3} e^{\frac 2 3 (3-n) s}(1+\landau(1)) &\text{ for $2<n<3,$}
\end{cases} && \mbox{as $s \to - \infty$,} \label{conv_qcg} \\
p &= \begin{cases} \order\left(e^{\frac n 3 s}\right) &\text{ for $0 < n < 2$}, \\
- \tfrac{2}{3 k^3} s e^{\frac n 3 s} (1+\landau(1)) &\text{ for $n=2$}, \\
\tfrac{2}{3 (n-2) k^3} e^{\frac 2 3 (3-n) s}(1+\landau(1)) &\text{ for $2<n<3,$}
\end{cases} && \mbox{as $s \to - \infty$,} \label{conv_pcg}
\end{align}
\end{subequations}
so that in particular $\left(r,q,p\right) \to (0,0,0)$ as $s\to -\infty$.
\end{lemma}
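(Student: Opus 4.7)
\enspace Formula \eqref{conv_rcg}, $r = e^{\frac{3-n}{3}s}$, is the explicit solution of the first component of \eqref{ds} matching the definition \eqref{def_r_q_p}, so no work is needed there. Since $q = e^{-(3-n)s/3}\mu$ and $p = e^{-(3-n)s/3}\mu'$, proving \eqref{conv_qcg}--\eqref{conv_pcg} reduces to determining the asymptotics of $\mu(s)$ and $\mu'(s)$ as $s\to-\infty$ from \eqref{ODE_s}--\eqref{BC_ode_s}.

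The plan is to treat \eqref{ODE_s} as the linear inhomogeneous equation
\[
L\mu = -f(s)h(\mu),\qquad L := \partial_s^2 - \partial_s,\qquad f(s) := \tfrac{2}{3k^3(1+e^{-(3-n)s})},\qquad h(\mu) := (1+\mu)^{-1/2},
\]
with $f(s) = \tfrac{2}{3k^3}e^{(3-n)s}(1+\landau(1))$ and $h(\mu)\to 1$ as $s\to-\infty$. Using \eqref{BC_ode_s}, I fix $s_0 \le 0$ with $|\mu|\le \tfrac12$ on $(-\infty,s_0]$, so that $h(\mu)$ is uniformly bounded there. Variation of constants against the homogeneous basis $\{1,e^s\}$ of $L$ (Wronskian $e^s$) yields
\[
\mu(s) = C_1 + C_2 e^s + \int_{-\infty}^{s} f(t)h(\mu(t))\,\d t - e^{s}\int_{s_\ast}^{s} e^{-t}f(t)h(\mu(t))\,\d t,
\]
with companion formula $\mu'(s) = C_2 e^s - e^s\int_{s_\ast}^s e^{-t}f(t)h(\mu(t))\,\d t$ obtained by differentiation, where the two $f(s)h(\mu)$ contributions cancel. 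The first integral converges at $-\infty$ since its integrand is $\order(e^{(3-n)t})$; the second integrand is $\order(e^{(2-n)t})$, so I take $s_\ast = -\infty$ for $n<2$ and $s_\ast = s_0$ otherwise. Passing $s\to-\infty$ in the representation and invoking \eqref{BC_ode_s} forces $C_1=0$.

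Extracting the stated asymptotics is then a bootstrap. A first pass replaces $h(\mu)$ by $1$: explicit computation gives $\int_{-\infty}^s f(t)\,\d t = \tfrac{2}{3k^3(3-n)}e^{(3-n)s}(1+\landau(1))$, while $\int_{s_\ast}^s e^{-t}f(t)\,\d t$ equals $\tfrac{2}{3k^3(2-n)}e^{(2-n)s}(1+\landau(1))$ for $n\ne 2$ (the divergent constant of the $s_\ast=s_0$ antiderivative being absorbed into $C_2$) and $\tfrac{2s}{3k^3}(1+\landau(1))$ for $n=2$. Assembling these, using the identity $\tfrac{1}{3-n}+\tfrac{1}{n-2}=\tfrac{1}{(3-n)(n-2)}$ in the case $n>2$, yields: $\mu = C_2 e^s + \order(e^{(3-n)s}) = \order(e^s)$ for $n<2$ (since $3-n>1$); $\mu = -\tfrac{2}{3k^3}\,se^s(1+\landau(1))$ for $n=2$; and $\mu = \tfrac{2}{3k^3(3-n)(n-2)}e^{(3-n)s}(1+\landau(1))$ for $n>2$ (since $3-n<1$ makes this dominate $C_2 e^s$). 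The same estimates applied to the $\mu'$-formula give parallel leading behavior for $\mu'$. Multiplying by $e^{-(3-n)s/3}$ reads off \eqref{conv_qcg}--\eqref{conv_pcg}, and $(r,q,p)\to(0,0,0)$ is immediate from these rates together with $r\to 0$.

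The main technical issue is closing the bootstrap in the borderline case $n=2$, where the leading term $se^s$ is only logarithmically larger than $e^s$, so I must verify that replacing $h(\mu)$ by $1$ introduces an error of order $\order(e^s)$ rather than $\order(se^s)$. Using $|h(\mu)-1|\lesssim|\mu|$ together with the first-pass bound $|\mu(t)|\lesssim|t|e^t$ gives $|e^{-t}f(t)(h(\mu(t))-1)|\lesssim|t|e^t$ on $(-\infty,s_0]$, whose integral over $(-\infty,s_0]$ is finite; consequently $e^s\int_{s_0}^s e^{-t}f(t)(h(\mu)-1)\,\d t = \order(e^s)$, as required. The analogous (easier) check in the regimes $n<2$ and $n>2$ closes the bootstrap there as well.
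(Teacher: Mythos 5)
Your proof is correct, and it takes a route that is organizationally different from the paper's even though the underlying mechanism is the same. The paper works directly in the $(H,\psi)$-coordinates: it observes from \eqref{main_ode} that $\tfrac{\d^2\psi}{\d H^2} = -\tfrac{2}{3k}H^{1-n}(1+\landau(1))$ as $H\downarrow 0$, integrates once from $\eps$ down to $H$ to extract $\tfrac{\d\psi}{\d H}$ (yielding $p$ via $p = \tfrac{e^{ns/3}}{k^2}\tfrac{\d\psi}{\d H}$), and then integrates $p$ against the weight $e^{(3-n)\tilde s/3}$ to obtain $q$. You instead stay in $(s,\mu)$-coordinates and apply variation of constants against the homogeneous basis $\{1,e^s\}$ of $\partial_s^2 - \partial_s$, which produces $\mu$ and $\mu'$ (hence $q$ and $p$ after multiplication by $e^{-(3-n)s/3}$) simultaneously from a single representation, with the constant $C_1$ killed by \eqref{BC_ode_s} and $C_2$ absorbing the choice of $s_\ast$. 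Both routes boil down to integrating the nearly-linear ODE twice and comparing growth rates; yours is more systematic and treats $q$ and $p$ symmetrically, while the paper's is more elementary but sequential. A genuine plus of your write-up is that you make the bootstrap explicit in the borderline case $n=2$, where replacing $h(\mu)$ by $1$ must be justified because the leading term $se^s$ is only logarithmically larger than the error $e^s$; the paper buries this inside the $(1+\landau(1))$ notation. One small ordering remark: the a priori bound $|\mu(t)|\lesssim |t|e^t$ used to close the bootstrap should be obtained first from the uniform two-sided bound on $h(\mu)$ on $(-\infty,s_0]$ (which requires only \eqref{BC_ode_s}), after which the refined pass with $h(\mu)=1+\order(\mu)$ is legitimate; your text slightly conflates the crude and refined passes, but the intended logic is clearly present.
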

\begin{proof}
We have $r \stackrel{\eqref{def_r_q_p}}{=} e^{\frac{3-n}{3} s}$ so that \eqref{conv_rcg} immediately follows.

\medskip

In order to determine the asymptotic behavior of $p$, we compute
\begin{equation}\label{rel_p_psi}
p \stackrel{\eqref{def_r_q_p}}{=} e^{- \frac{3-n}{3} s} \tfrac{\d\mu}{\d s} \stackrel{\eqref{mu}}{=} \tfrac{e^{- \frac{3-n}{3} s}}{k^2} \tfrac{\d\psi}{\d s} \stackrel{\eqref{s_h}}{=} \tfrac{e^{\frac n 3 s}}{k^2} \tfrac{\d\psi}{\d H}.
\end{equation}
Hence, the asymptotic of $p$ is determined by the asymptotic of $\frac{\d\psi}{\d H}$. Therefore, note that from \eqref{mu} and \eqref{BC_ode_s} it follows that $\psi = k^2 (1+\landau(1))$ as $H\downarrow0$ and equation~\eqref{main_ode} (which by virtue of \eqref{s_mu} is equivalent to \eqref{ODE_s}) gives
\begin{align*}
\tfrac{\d^2\psi}{\d H^2} &= - \tfrac{2}{3}(H^2+H^{n-1})^{-1} \psi^{-\frac 1 2} = - \tfrac{2}{3 k} H^{1-n} (1+\landau(1)) && \mbox{as $H\downarrow0$}.
\end{align*}
In order to obtain an expression for $\frac{\d\psi}{\d H}$, take $\eps > 0$ and write
\begin{align*}
\tfrac{\d\psi}{\d H} &= \rbar{\tfrac{\d\psi}{\d H}}{H=\eps} - \int_H^\eps \rbar{\tfrac{\d^2\psi}{\d H^2}}{H=\tilde H} \d\tilde H = \rbar{\tfrac{\d\psi}{\d H}}{H=\eps} + \tfrac{2}{3k} (1+\landau(1))\int_H^\varepsilon\tilde H^{1-n}d\tilde H \\
&=\begin{cases} C(\varepsilon)-\frac{2}{3 (2-n) k} H^{2-n} (1+\landau(1)) &\text{ as $H\downarrow 0$ for $n\ne2$,} \\
C(\varepsilon) - \tfrac{2}{3 k} (\ln H) (1+\landau(1)) &\text{ as $H\downarrow 0$ for $n=2$,}
\end{cases}
\end{align*}
where $C(\varepsilon)$ is a constant only depending on $\eps$. This implies
\begin{align*}
\tfrac{\d\psi}{\d H} = \begin{cases} C(\varepsilon)(1+\landau(1)) &\text{ as $H\downarrow 0$ for $0<n<2$}, \\
- \tfrac{2}{3 k} \ln H (1+\landau(1)) &\text{ as $H\downarrow 0$ for $n=2$}, \\
\tfrac{2}{3 (n-2) k} H^{2-n}(1+\landau(1)) &\text{ as $H\downarrow 0$ for $2<n<3,$}
\end{cases}
\end{align*}
so that because of \eqref{s_h} and \eqref{rel_p_psi} we obtain \eqref{conv_pcg}.

\medskip

Finally, since
\[
q \stackrel{\eqref{BC_ode_s}, \eqref{def_r_q_p}}{=} e^{- \frac{3-n}{3} s} \int_{-\infty}^s e^{\frac{3-n}{3} \tilde s} \rbar{p}{s=\tilde s} \d\tilde s,
\]
we obtain \eqref{conv_qcg} from \eqref{conv_pcg}.
\end{proof}
%

\subsection{Characterization of the unstable manifold}
\subsubsection{Hyperbolicity and linearization}
Equation \eqref{ds} can be linearized around the fixed point $(r,q,p) = (0, 0, 0)$, resulting in
\[
    DF \stackrel{\eqref{def_f}}{=}
    \begin{pmatrix} \frac{3-n}{3} & 0 & 0 \\
    0 & -\frac{3-n}{3} & 1 \\
    -\frac{2}{3 k^3} \frac{2 r - r^4}{(1+r^3)^2} (1+rq)^{-\frac 1 2} + \frac{1}{3 k^3} \frac{r}{1+r^3} \frac{rq}{(1+rq)^{\frac 32}} & \frac{1}{3k^3} \frac{r^3}{1+r^3} (1+rq)^{-\frac 3 2} & \frac{n}{3}
    \end{pmatrix},
\]
so that
\[
     \rbar{DF}{(r,q,p) = (0,0,0)} =
     \begin{pmatrix} \frac{3-n}{3} & 0 & 0\\
    0&-\frac{3-n}{3}&1\\
    0&0&\frac{n}{3}
    \end{pmatrix},
\]
where $DF$ denotes the Jacobian matrix of $F$ evaluated in $(0, 0, 0)$. The eigenvalues are distinct and equal to $\frac{3-n}{3}$, $-\frac{3-n}{3}$, and $\frac n 3$, so that because of $n \in (0,3)$ the fixed point $(r,q,p) = (0,0,0)$ is hyperbolic with two-dimensional unstable manifold $M^-$ and one-dimensional stable manifold $M^+$. Note that hyperbolicity is ensured by including the factors $e^{- \frac{3-n}{3} s}$ in the definitions of $r$, $q$, and $p$, as otherwise the system would have infinitely many non-hyperbolic fixed points.

\medskip

The linearized system can be diagonalized, that is,
\begin{equation}\label{diagonal}
\rbar{DF}{(r,q,p) = (0,0,0)} = \begin{pmatrix} 1 & 0 & 0 \\
0 & 1 & 1 \\
0 & 0 & 1
\end{pmatrix} \begin{pmatrix} \frac{3-n}{3} & 0 & 0 \\ 0 & - \frac{3-n}{3} & 0 \\ 0 & 0 & \frac n 3
\end{pmatrix} \begin{pmatrix} 1 & 0 & 0 \\
0 & 1 & -1 \\
0 & 0 & 1
\end{pmatrix}.
\end{equation}
The representation \eqref{diagonal} is convenient in order to characterize the unstable manifold.

\subsubsection{The unstable manifold}
%
\begin{lemma}\label{lem:unstable}
For $n\in(0,3)$ and $k > 0$, let $\mu$ be an in $s \in \R$ twice continuously differentiable solution to \eqref{ODE_s} and \eqref{BC_ode_s} and let $(r,q,p)$ be defined by \eqref{def_r_q_p}. Then $(r,q,p)$ lies on the unstable manifold $M^{-}$ of the fixed point $(0,0,0)$ of the dynamical system \eqref{ds_f}. The unstable manifold $M^-$ can be parameterized by $p = p^-$, where $p^-$ as a function of $(r,q,k)$ is analytic in $(r,q)$ in a neighborhood of $(r,q) = 0$ meeting the partial differential equation
\begin{equation}\label{unstable_eq}
\left(r \partial_r - q \partial_q - \tfrac{n}{3-n}\right) p^- + \tfrac{3}{3-n} p^- \partial_q p^- = - \tfrac{2}{(3-n) k^3} \tfrac{r^2}{1+r^3} \left(1+rq\right)^{-\frac 1 2}
\end{equation}
and smooth in $k > 0$ with
\begin{subequations}\label{partial_p-}
\begin{align}
p^- &= 0 && \mbox{at $(r,q) = (0,0)$}, \label{partial_p-_0} \\
\partial_r p^{-} &= 0 && \mbox{at $(r,q) = (0,0)$}, \label{partial_p-_1r} \\
\partial_q p^- &= 1 && \mbox{at $(r,q) = (0,0)$}, \label{partial_p-_1q} \\
\partial_r^2 p^- &= - \tfrac{4}{3 k^3 (3-n)} && \mbox{at $(r,q) = (0,0)$}, \label{partial_p-_2r} \\
\partial_r^j \partial_q^\ell p^- &= 0 && \mbox{at $(r,q) = (0,0)$ for $(j,\ell) \in \N_0^2$ with $j \le \ell-2$.} \label{partial_p-_jrlq} 
\end{align}
\end{subequations}
\end{lemma}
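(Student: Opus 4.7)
The plan is to invoke the analytic unstable manifold theorem at the hyperbolic fixed point $(0,0,0)$ of the dynamical system \eqref{ds_f}. From \eqref{diagonal}, the linearization has the three distinct eigenvalues $\tfrac{3-n}{3}$, $-\tfrac{3-n}{3}$, and $\tfrac{n}{3}$; since $n \in (0,3)$ two are positive and one negative, so the unstable subspace is two-dimensional, spanned by the eigenvectors $(1,0,0)$ (for $\tfrac{3-n}{3}$) and $(0,1,1)$ (for $\tfrac{n}{3}$). Because $F$ in \eqref{def_f} is real-analytic in $(r,q,p)$ in a neighborhood of the origin (the factors $(1+r^3)^{-1}$ and $(1+rq)^{-1/2}$ being analytic there) and smooth in $k>0$, the parametric analytic unstable manifold theorem (Hadamard--Perron form) produces a local invariant manifold $M^-$ tangent to the unstable subspace, analytic in $(r,q)$ and smooth in $k$. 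The projection of the unstable subspace onto the $(r,q)$-plane is the linear isomorphism $(a,b,b)\mapsto(a,b)$, so $M^-$ is locally the graph $\{p=p^-(r,q)\}$; the tangency condition forces $p^-(0,0)=0$ and $p^-(r,q)=q+O(|(r,q)|^2)$, already yielding \eqref{partial_p-_0}--\eqref{partial_p-_1q}. That the orbit associated to $\mu_\mathrm{CG}$ lies on $M^-$ follows from Lemma~\ref{lem:conv_cg}: the estimates \eqref{conv_cg} guarantee exponential decay of $(r,q,p)$ as $s\to-\infty$, and orbits with such decay are precisely those on the unstable manifold.

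The PDE \eqref{unstable_eq} is then derived by substituting $p=p^-(r,q)$ into \eqref{ds_f} and applying the chain rule,
\[
\partial_r p^-\cdot\tfrac{3-n}{3}r+\partial_q p^-\cdot\bigl(-\tfrac{3-n}{3}q+p^-\bigr)=\tfrac{n}{3}p^--\tfrac{2}{3k^3}\tfrac{r^2}{1+r^3}(1+rq)^{-\frac{1}{2}},
\]
then dividing by $\tfrac{3-n}{3}$. For \eqref{partial_p-_2r}, set $a_{j\ell}:=\tfrac{1}{j!\,\ell!}\partial_r^j\partial_q^\ell p^-|_0$ and match the $r^2$-coefficient in \eqref{unstable_eq}: $r\partial_r p^-$ contributes $2a_{2,0}$, $-q\partial_q p^-$ contributes $0$, $-\tfrac{n}{3-n}p^-$ contributes $-\tfrac{n}{3-n}a_{2,0}$, the nonlinear term $\tfrac{3}{3-n}p^-\partial_q p^-$ contributes $\tfrac{3}{3-n}a_{2,0}$ (through the pairing $a_{2,0}r^2\cdot a_{0,1}$), and the right-hand side contributes $-\tfrac{2}{(3-n)k^3}$. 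Solving $3a_{2,0}=-\tfrac{2}{(3-n)k^3}$ yields $\partial_r^2 p^-|_0=2a_{2,0}=-\tfrac{4}{3(3-n)k^3}$.

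The main obstacle is the vanishing assertion \eqref{partial_p-_jrlq}, which the plan is to establish by induction on the total degree $N=j+\ell$. Writing $p^-=\sum_{j+\ell\geq 1}a_{j\ell}r^jq^\ell$, one extracts the $r^jq^\ell$-coefficient in \eqref{unstable_eq} for $j\leq\ell-2$. The operator $r\partial_r-q\partial_q$ is diagonal on monomials with eigenvalue $j-\ell$, and the right-hand side $-\tfrac{2}{(3-n)k^3}\tfrac{r^2}{1+r^3}(1+rq)^{-1/2}$ is a power series supported on monomials with $j\geq\ell+2$, which contributes nothing when $j\leq\ell-2$. For the nonlinear term $\tfrac{3}{3-n}p^-\partial_q p^-$, a summand $a_{j_1\ell_1}\,\ell_2\,a_{j_2\ell_2}\,r^{j_1+j_2}q^{\ell_1+\ell_2-1}$ contributes to $r^jq^\ell$ only when $\ell-j=(\ell_1-j_1)+(\ell_2-j_2)-1$; if both parentheses are $\leq 1$ the sum is at most $1<2$, while by induction any factor with $\ell_i-j_i\geq 2$ and $j_i+\ell_i<N$ vanishes. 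The only surviving contributions containing $a_{j\ell}$ come from pairing with the base term $a_{0,1}=1$ and amount to $(1+\ell)a_{j\ell}$, so the identity collapses to
\[
\frac{3(j+1)+n(\ell-j-1)}{3-n}\,a_{j\ell}=0.
\]
Since the numerator is strictly positive for $j\leq\ell-2$ and $n\in(0,3)$, we conclude $a_{j\ell}=0$. The delicate point in the above argument is to track all cross terms in the convolution carefully, ensuring that factors of high resonance index $\ell_i-j_i$ either have total degree below $N$ (and vanish by induction) or coincide with $a_{j\ell}$ itself, so that no contribution is either double-counted or missed.
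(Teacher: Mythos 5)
Your proof is correct and follows the paper's strategy: invoke the analytic unstable manifold theorem at the hyperbolic fixed point, read off the tangency conditions \eqref{partial_p-_0}--\eqref{partial_p-_1q} from the graph representation over the unstable eigenplane, derive the PDE \eqref{unstable_eq} by the chain rule along orbits on $M^-$, and then prove \eqref{partial_p-_2r} and \eqref{partial_p-_jrlq} by matching Taylor coefficients. The one place where you deviate technically is the induction for \eqref{partial_p-_jrlq}: you induct on total degree $N = j + \ell$, whereas the paper orders lexicographically in $(\ell, j)$. Your accounting is actually the cleaner of the two. Because the factor total degrees sum to $N+1$, any cross term with a resonance index $\ge 2$ either has total degree $< N$ (and vanishes by induction) or has a complementary factor of total degree $\le 1$, i.e.\ $a_{0,1}=1$, which is exactly how you isolate the two surviving terms $a_{j\ell}$ and $\ell a_{j\ell}$. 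This yields the prefactor $\tfrac{3(j+1)+n(\ell-j-1)}{3-n}$, which is strictly positive for $n\in(0,3)$ and $j\le\ell-2$, and this is the correct coefficient; the paper's displayed $(\ell-j-1)(3-n)$ appears to undercount by missing the contribution $\ell\,\partial_r^j\partial_q^\ell p^-|_0$ from the pairing with $\partial_q p^-|_0=1$, but since both quantities are nonzero in the relevant range, the stated conclusion is unaffected.
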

\begin{proof}
The tangent space to the unstable manifold $M^-$ at $(0,0,0)$ is spanned by the vectors (cf.~\eqref{diagonal})
\[
v_1 := (1, 0, 0) \qquad \mbox{and} \qquad v_2 := (0, 1, 1).
\]
A vector perpendicular to $v_1$ and $v_2$ is given by
\[
v_1 \times v_2 = (0 , -1, 1),
\]
so that the tangent space to $M^{-}$ at $(0,0,0)$ is given by
\begin{equation}\label{tangent_space}
p = q.
\end{equation}
Hence, $M^-$ can be parameterized by $p = p^-$, where $p^{-}$ is a function of $(r,q,k)$. The analyticity of $F$ in $(r,q,p) = (0,0,0)$ (cf.~\eqref{def_f}) implies that $M^{-}$ is analytic in a neighboorhood of $(r,q,p) = (0,0,0)$ by \cite[Theorem~4.1]{Coddington1955}. The first three partial derivatives \eqref{partial_p-_0}, \eqref{partial_p-_1r}, and \eqref{partial_p-_1q} evaluated in $(r,q) = (0,0)$, are immediate from \eqref{tangent_space} and the smoothness in $k > 0$ is proved for instance in \cite[p.~165--166]{Palis_Takens} or \cite[\S9.2, Theorem~9.6]{Teschl2012}.

\medskip

We now compute $\rbar{\partial_r^2 p^{-}}{(r,q) = (0,0)}$ in \eqref{partial_p-_2r}. Observe that on $M^{-}$ it holds $p = p^-$, so that
\[
\tfrac{\d p}{\d s} = \partial_r p^{-} \tfrac{\d r}{\d s} + \partial_q p^{-} \tfrac{\d q}{\d s}
\]
and thus using \eqref{ds_f} to substitute derivatives in $s$, we obtain the partial differential equation
\[
\tfrac{3-n}{3} r \partial_r p^{-} + \left(p^{-} - \tfrac{3-n}{3}q\right) \partial_q p^{-} = \tfrac n 3 p^{-} - \tfrac{2}{3 k^3} \tfrac{r^2}{1+r^3} \left(1+rq\right)^{-\frac 1 2},
\]
which is equivalent to \eqref{unstable_eq}. Using the already computed \eqref{partial_p-_0}, \eqref{partial_p-_1r}, and \eqref{partial_p-_1q}, it follows after differentiating \eqref{unstable_eq} in $r$ twice and evaluating at $(r,q) = (0,0)$ that
\[
\left(2 - \tfrac{n}{3-n}\right) \rbar{\left(\partial_r^2 p^{-}\right)}{(r,q) = (0,0)} + \tfrac{3}{3-n} \rbar{\left(\partial_r^2 p^{-}\right)}{(r,q) = (0,0)} = - \tfrac{4}{(3-n) k^3},
\]
leading to $\rbar{\left(\partial_r^2 p^{-}\right)}{(r,q) = (0,0)} = - \tfrac{4}{3 (3-n) k^3}$ as stated in \eqref{partial_p-_2r}.

\medskip

For the proof of \eqref{partial_p-_jrlq} we argue inductively. Taking $\partial_r$ and $\partial_q$ derivatives of \eqref{unstable_eq} we get
\begin{align*}
& \left(r \partial_r - q \partial_q - \tfrac{n+(\ell-j) (3-n)}{3-n}\right) \partial_r^j \partial_q^\ell p^- + \tfrac{3}{3-n} \sum_{0 \le j' \le j} \sum_{0 \le \ell' \le \ell} {j \choose j'} {\ell \choose \ell'} \left(\partial_r^{j-j'} \partial_q^{\ell-\ell'} p^-\right) \left(\partial_r^{j'} \partial_q^{\ell'+1} p^-\right) \\
& \quad = - \tfrac{2 (-1)^{\ell+1}}{(3-n) k^3} \tfrac 1 2 \cdot \tfrac 3 2 \cdot \ldots \cdot \tfrac{2\ell-1}{2} \partial_r^j \left(\tfrac{r^{\ell+2}}{1+r^3} \left(1+rq\right)^{-\frac{2\ell+1}{2}}\right)
\end{align*}
and evaluating at $(r,q) = (0,0)$ leads to
\begin{align}
& \left(n + (\ell-j) (3-n)\right) \rbar{\left(\partial_r^j \partial_q^\ell p^-\right)}{(r,q) = (0,0)} \nonumber\\
& - 3 \sum_{0 \le j' \le j} \sum_{0 \le \ell' \le \ell} {j \choose j'} {\ell \choose \ell'} \rbar{\left(\partial_r^{j-j'} \partial_q^{\ell-\ell'} p^-\right)}{(r,q) = (0,0)} \rbar{\left(\partial_r^{j'} \partial_q^{\ell'+1} p^-\right)}{(r,q) = (0,0)} = 0, \label{partial_induction}
\end{align}
where we suppose $(j,\ell) \in \N_0^2$ with $j \le \ell-2$. If we assume that $\rbar{\left(\partial_r^{j''} \partial_q^{\ell''} p^-\right)}{(r,q) = (0,0)} = 0$ for $(j'',\ell'') \in \N_0^2$ provided
\begin{itemize}
\item $\ell'' \le \ell-1$, or
\item $\ell'' = \ell$ and $j'' \le j-1$,
\end{itemize}
then it follows from \eqref{partial_p-_1q} and \eqref{partial_induction} that
\[
(\ell-j-1) (3-n) \rbar{\left(\partial_r^j \partial_q^\ell p^-\right)}{(0,0,k)} = 0,
\]
which because of $j \le \ell -2$ implies \eqref{partial_p-_jrlq}.
\end{proof}
%

\subsection{The ODE lifted on the unstable manifold}
\subsubsection{Formulation of the ODE}

In what follows, motivated by \eqref{def_r_q_p} and \eqref{partial_p-}, we define
\begin{subequations}\label{def_g_rho}
\begin{equation}\label{def_g}
g := r \rbar{p^-}{q = r^{-1}\mu} - \mu + \tfrac{2}{3 k^3(3-n)} r^3
\end{equation}
and
\begin{equation}\label{def_rho}
\varrho := r^3.
\end{equation}
\end{subequations}
We have the following result:
\begin{corollary}\label{cor:ode_unstable}
Let $n\in(0,3)$. Then the dependent variable $g$ as a function of $(\varrho,\mu,k)$ is analytic in $(\varrho,\mu)$ in a neighborhood of $(\varrho,\mu) = (0,0)$, smooth in $k > 0$, and meets the conditions
\begin{subequations}\label{mu_g_analytic}
\begin{align}\label{bc_g_rho_mu_k}
g &= \partial_\mu g = \partial_\varrho g = 0 && \mbox{at $(\varrho,\mu) = (0,0)$.}
\end{align}
Furthermore, for any in $s \in \R$ twice continuously differentiable $\mu$ solving \eqref{ODE_s} and \eqref{BC_ode_s} it holds for $H > 0$ sufficiently small
\begin{equation}\label{eq_mu_g}
\left(H \tfrac{\d}{\d H} - 1\right) \mu = \rbar{g}{\varrho = H^{3-n}} - \tfrac{2}{3 k^3(3-n)} H^{3-n}.
\end{equation}
\end{subequations}
\end{corollary}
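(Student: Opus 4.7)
The plan is to exploit a $\Z/3\Z$-symmetry of the PDE \eqref{unstable_eq} to show that the Taylor coefficients $c_{j,\ell}(k)$ in the expansion $p^-(r,q,k) = \sum_{j,\ell \ge 0} c_{j,\ell}(k)\, r^j q^\ell$ vanish unless $j - \ell \equiv 2 \pmod{3}$. Combined with the vanishing condition \eqref{partial_p-_jrlq}, this forces every nonzero $c_{j,\ell}$ to satisfy $j = \ell - 1 + 3m$ for some $m \in \N_0$. Substitution of $q = r^{-1}\mu$ then turns $r\, p^-$ into a power series purely in $(\varrho, \mu) = (r^3, \mu)$, which is precisely the structure required by the corollary.

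To establish the symmetry, pick a primitive cube root of unity $\omega$ and define $(\phi_\omega f)(r, q) := \omega f(\omega r, \omega^{-1} q)$. A chain-rule computation shows that the linear operator $r\partial_r - q\partial_q - \tfrac{n}{3-n}$ is equivariant under $\phi_\omega$, that the quadratic nonlinearity $\tfrac{3}{3-n}f\,\partial_q f$ is also equivariant (the two factors $\omega$ from the copies of $f$ combine with $\omega^{-1}$ from $\partial_q$ to reproduce the prefactor $\omega$), and that the inhomogeneity is $\phi_\omega$-invariant (both $r^3$ and $rq$ are invariant, while the factor $r^2$ contributes $\omega^2$ which is cancelled by the outer $\omega$). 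Hence $\phi_\omega p^-$ is another analytic solution of \eqref{unstable_eq} satisfying the same conditions \eqref{partial_p-_0}--\eqref{partial_p-_1q} at the origin, and by uniqueness of the analytic unstable manifold $\phi_\omega p^- = p^-$; at the level of Taylor coefficients this gives $\omega^{j-\ell+1} c_{j,\ell} = c_{j,\ell}$, yielding the desired congruence.

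With this in hand, $r\, p^-(r, r^{-1}\mu, k) = \sum_{m, \ell \ge 0} c_{\ell - 1 + 3m,\,\ell}(k)\, \varrho^m\, \mu^\ell$ is analytic in $(\varrho, \mu)$ near $(0,0)$ and smooth in $k > 0$ by Lemma~\ref{lem:unstable}. The values $c_{0,1} = 1$ from \eqref{partial_p-_1q} and $c_{2,0} = -\tfrac{2}{3(3-n)k^3}$ from \eqref{partial_p-_2r} are cancelled exactly by the two correction terms in \eqref{def_g}, leaving $g$ supported on monomials $\varrho^m\mu^\ell$ with $m + \ell \ge 2$. This establishes analyticity, smoothness in $k$, and the vanishing \eqref{bc_g_rho_mu_k}. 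For the ODE \eqref{eq_mu_g}, Lemma~\ref{lem:conv_cg} ensures $(r,q,p) \to 0$ as $s \to -\infty$, so the trajectory lies on $M^-$ and $p = p^-(r, q, k)$. Combining this with $p = r^{-1}\tfrac{\d\mu}{\d s}$, $q = r^{-1}\mu$, and $\tfrac{\d}{\d s} = H\tfrac{\d}{\d H}$ (since $s = \ln H$) gives $(H\tfrac{\d}{\d H} - 1)\mu = r\, p^- - \mu = g - \tfrac{2}{3(3-n)k^3}\varrho$, which becomes \eqref{eq_mu_g} after setting $\varrho = H^{3-n}$.

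The main technical point is identifying the correct $\Z/3\Z$-action: the linear operator would already be equivariant under the simpler $f \mapsto f(\omega r, \omega^{-1} q)$, but the nonlinearity $f\, \partial_q f$ picks up an extra $\omega^{-1}$ from $\partial_q$ that must be compensated by inserting the prefactor $\omega$, and this choice then forces the inhomogeneity to behave as $r^2$ under the rescaling---which, pleasantly, it does.
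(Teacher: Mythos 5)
Your proof is correct in substance but follows a genuinely different route from the paper, and the key step deserves a caveat.

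The paper does \emph{not} argue at the level of $p^-$. Instead, it plugs the ansatz \eqref{def_g} into the unstable-manifold PDE \eqref{unstable_eq} to obtain the PDE \eqref{pde_g_r_mu} for $g$ in the variables $(r,\mu)$, whose Taylor recursion \eqref{formula_g_r_mu} has denominators $(3-n)j+3\ell$; these never vanish for $(j,\ell)\ne(0,0)$, so the $(r,\mu)$-coefficients of any analytic solution are uniquely determined. It then constructs a solution to the transported PDE \eqref{pde_g_rho_mu} directly as a power series in $(\varrho,\mu)$ and identifies it, after substituting $\varrho=r^3$, with $g$ by the uniqueness just established. The point is precisely that the recursion for $g$ is resonance-free, which your argument never uses.

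Your approach instead works directly with $p^-$ and its PDE \eqref{unstable_eq}, establishes the $\Z/3\Z$-equivariance under $(\phi_\omega f)(r,q)=\omega f(\omega r,\omega^{-1}q)$ (which checks out: the map $(r,q,p)\mapsto(\omega^{-1}r,\omega q,\omega p)$ conjugates the vector field \eqref{def_f} to itself, using $(\omega r)^3=r^3$, $(\omega r)(\omega^{-1}q)=rq$, and $\omega\cdot\omega^2 r^2 = r^2$), and deduces $c_{j,\ell}=0$ unless $j-\ell\equiv 2\pmod 3$. Combined with $j\ge\ell-1$ from \eqref{partial_p-_jrlq}, this indeed forces $rp^-(r,r^{-1}\mu)$ to be a power series in $(\varrho,\mu)$, and your bookkeeping of the low-order coefficients correctly matches the two subtractions in \eqref{def_g}. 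This is arguably a more conceptual explanation of why the substitution $\varrho=r^3$ works than the paper's recursion-counting.

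The one place where your argument is thinner than it should be is the clause ``by uniqueness of the analytic unstable manifold $\phi_\omega p^-=p^-$''. Since $\omega$ is a nonreal cube root of unity, the identity $\phi_\omega p^-=p^-$ is an identity of complex power series, and the map $\Phi:(r,q,p)\mapsto(\omega^{-1}r,\omega q,\omega p)$ does not act on $\R^3$. What you actually need is invariance of the \emph{complexified} (holomorphic) unstable manifold under $\Phi$, which is a uniqueness statement for the holomorphic unstable manifold theorem (a standard extension, but one the paper never invokes --- it cites only the real-analytic version \cite[Theorem~4.1]{Coddington1955}). Note you cannot fall back on uniqueness of formal power-series solutions of \eqref{unstable_eq} with the boundary conditions \eqref{partial_p-_0}--\eqref{partial_p-_1q}, because the linear operator in that PDE is $r\partial_r-q\partial_q-\tfrac{n}{3-n}$, whose recursion denominators $j-\ell-\tfrac{n}{3-n}$ vanish whenever $\tfrac{n}{3-n}$ is a nonnegative integer (e.g.\ $n=\tfrac32,2,\tfrac94,\ldots$), so the formal recursion at the $p^-$ level genuinely has resonances. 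This is exactly the difficulty the paper's reformulation in terms of $g$ is designed to avoid. Your argument is correct once you state that you are using the uniqueness of the holomorphic unstable manifold of the complexified system; without that explicit invocation, the step is a gap.
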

\begin{proof}
Because of \eqref{partial_p-_jrlq} of Lemma~\ref{lem:unstable} and \eqref{def_g}, it holds
\begin{align}\label{g_r_mu_series}
g = \sum_{\substack{j \ge 0, \, \ell \ge 0, \\ j + \ell \ge 1}} \frac{1}{(j+\ell-1)! \ell!} \rbar{\partial_r^{j+\ell-1} \partial_q^\ell p^-}{(r,q) = (0,0)} r^j \mu^\ell - \mu + \frac{2}{3 k^3 (3-n)} r^3,
\end{align}
so that $g$ is analytic in $(r,\mu)$ in a neighborhood of $(r,\mu) = (0,0)$ and smooth in $k > 0$. In view of \eqref{s_h}, \eqref{def_r_q_p}, and \eqref{def_g}, it holds
\[
\left(H \tfrac{\d}{\d H} - 1\right) \mu = \rbar{g}{r = H^{\frac{3-n}{3}}} - \tfrac{2}{3 k^3(3-n)} H^{3-n}.
\]
Because of
\begin{align*}
p^{-} \stackrel{\eqref{def_g}}{=} & r^{-1} g + r^{-1} \mu - \tfrac{2}{3 k^3(3-n)} r^2, \\
r \partial_r p^- \stackrel{\eqref{def_g}}{=} &- r^{-1} g + \partial_r g + r^{-1} \mu \partial_\mu g - \tfrac{4}{3 k^3(3-n)} r^2,\\
\partial_q p^- \stackrel{\eqref{def_g}}{=} &\partial_\mu g + 1, \\
q \partial_q p^- \stackrel{\eqref{def_g}}{=}& r^{-1} \mu\partial_\mu g + r^{-1}\mu,
\end{align*}
on identifying $\mu = r q$, the PDE \eqref{unstable_eq} of Lemma~\ref{lem:unstable} turns into
\begin{align*}
& - r^{-1} g + \partial_r g + r^{-1} \mu \partial_\mu g - \tfrac{4}{3 k^3(3-n)} r^2 - r^{-1} \mu \partial_\mu g - r^{-1} \mu - \tfrac{n}{3-n} r^{-1} g - \tfrac{n}{3-n} r^{-1} \mu + \tfrac{2 n}{3 k^3(3-n)^2} r^2 \\
& + \tfrac{3}{3-n} \left(r^{-1} g + r^{-1} \mu - \tfrac{2}{3 k^3(3-n)} r^2\right) \left(\partial_\mu g + 1\right) \\
& \quad = - \tfrac{2}{(3-n) k^3} \tfrac{r^2}{1+r^3} \left(1+\mu\right)^{-\frac 1 2},
\end{align*}
which simplifies to
\begin{subequations}\label{problem_g_r_mu}
\begin{equation}\label{pde_g_r_mu}
\left((3-n) r \partial_r + 3 \mu \partial_\mu - \tfrac{2}{k^3 (3-n)} r^3 \partial_\mu\right) g + 3 g \partial_\mu g = \tfrac{2}{k^3} r^3 \left(1 - \left(1+r^3\right)^{-1} \left(1+\mu\right)^{-\frac 1 2}\right).
\end{equation}
We obtain with help of \eqref{g_r_mu_series}
\begin{align}
\rbar{g}{(r,\mu) = (0,0)} &= 0, \label{bc_g_r_mu_0} \\
\rbar{\partial_r g}{(r,\mu) = (0,0)} &= \rbar{p^-}{(r,q) = (0,0)} \stackrel{\eqref{partial_p-_0}}{=} 0, \label{bc_g_r_mu_1r} \\
\rbar{\partial_\mu g}{(r,\mu) = (0,0)} &= \rbar{\partial_q p^-}{(r,q) = (0,0)} - 1 \stackrel{\eqref{partial_p-_1q}}{=} 0. \label{bc_g_r_mu_1mu}
\end{align}
\end{subequations}
Writing
\[
g = \sum_{j,\ell = 0}^\infty a_{j,\ell} r^j \mu^\ell \quad \mbox{and} \quad \tfrac{2}{k^3} r^3 \left(1 - \left(1+r^3\right)^{-1} \left(1+\mu\right)^{-\frac 1 2}\right) = \sum_{j,\ell = 0}^\infty c_{j,\ell} r^j \mu^\ell,
\]
where
\begin{subequations}\label{rec_g_r_mu}
\begin{align}\label{coeff_0}
a_{j,\ell} &= c_{j,\ell} = 0 && \mbox{for $(j,\ell) \in \{(0,0),(1,0),(0,1)\}$}
\end{align}
by \eqref{bc_g_r_mu_0}--\eqref{bc_g_r_mu_1mu} and the definition, respectively, we obtain after insertion into \eqref{pde_g_r_mu} the relation
\begin{align}\label{formula_g_r_mu}
a_{j,\ell} &= \frac{c_{j,\ell} + \tfrac{2 (\ell+1)}{k^3 (3-n)} a_{j-3,\ell+1} - 3 \sum_{j'+j''=j} \sum_{\ell'+\ell'' = \ell+1} \ell'' a_{j',\ell'} a_{j'',\ell''}}{(3-n) j + 3 \ell} && \mbox{for $j+\ell \ge 1$},
\end{align}
\end{subequations}
where we let $a_{j-3,\ell+1} = 0$ if $j \le 2$. Note that because of \eqref{coeff_0} it holds $\ell'' a_{j',\ell'} a_{j'',\ell''} = 0$ if $j' + \ell' \ge j+\ell$ or $j''+\ell''\ge j+\ell$. Hence, for $j+\ell = m$ fixed, \eqref{formula_g_r_mu} uniquely determines $a_{j,m-j}$ for $j \in \{0,\ldots,m\}$ inductively starting from $j = 0$. Induction in $m = j + \ell$ using \eqref{rec_g_r_mu} then uniquely determines the coefficients $a_{j,\ell}$ with $(j,\ell) \in \N_0^2$ and thus $g$ in a neighborhood of $(r,\mu) = (0,0)$, where it is analytic in $(r,\mu)$.

\medskip

Using $\varrho = r^3$ and $3 \varrho \partial_\varrho = r \partial_r$, \eqref{pde_g_r_mu}, \eqref{bc_g_r_mu_0}, and \eqref{bc_g_r_mu_1mu} turn into
\begin{subequations}\label{problem_g_rho_mu}
\begin{equation}\label{pde_g_rho_mu}
\left((3-n) \varrho \partial_\varrho + \mu \partial_\mu - \tfrac{2}{3 k^3 (3-n)} \varrho \partial_\mu\right) g + g \partial_\mu g = \tfrac{2}{3 k^3} \varrho \left(1 - \left(1+\varrho\right)^{-1} \left(1+\mu\right)^{-\frac 1 2}\right),
\end{equation}
where
\begin{align}
g &= 0 && \mbox{at $(\varrho,\mu) = (0,0)$ by \eqref{bc_g_r_mu_0}}, \label{bc_g_rho_mu_0} \\
\partial_\mu g &= 0 && \mbox{at $(\varrho,\mu) = (0,0)$ by \eqref{bc_g_r_mu_1mu}}. \label{bc_g_rho_mu_1mu}
\end{align}
Taking a derivative $\partial_\varrho$ of \eqref{pde_g_rho_mu} and using \eqref{bc_g_rho_mu_0} and \eqref{bc_g_rho_mu_1mu}, we infer that
\begin{align}
\partial_\varrho g &= 0 && \mbox{at $(\varrho,\mu) = (0,0)$}.
\end{align}
\end{subequations}
Writing
\[
g = \sum_{j,\ell = 0}^\infty A_{j,\ell} \varrho^j \mu^\ell \quad \mbox{and} \quad \tfrac{2}{3 k^3} \varrho \left(1 - \left(1+\varrho\right)^{-1} \left(1+\mu\right)^{-\frac 1 2}\right) = \sum_{j,\ell = 0}^\infty C_{j,\ell} \varrho^j \mu^\ell,
\]
where
\begin{subequations}\label{rec_g_rho_mu}
\begin{align}\label{a_c_zero}
A_{j,\ell} &= C_{j,\ell} = 0 && \mbox{for $(j,\ell) \in \{(0,0),(1,0),(0,1)\}$}
\end{align}
by \eqref{bc_g_rho_mu_0}--\eqref{bc_g_rho_mu_1mu} and the definition, respectively, we get inserted into \eqref{pde_g_rho_mu} the relation
\begin{align}\label{formula_g_rho_mu}
A_{j,\ell} &= \frac{C_{j,\ell} + \frac{2 (\ell+1)}{3 k^3 (3-n)} A_{j-1,\ell+1} - \sum_{j'+j''=j} \sum_{\ell'+\ell'' = \ell+1} \ell'' A_{j',\ell'} A_{j'',\ell''}}{(3-n) j + \ell} && \mbox{for $j+\ell \ge 1$},
\end{align}
\end{subequations}
where we use the convention $A_{j-1,\ell+1} = 0$ if $j = 0$. Because of \eqref{a_c_zero} we have $\ell'' A_{j',\ell'} A_{j'',\ell''} = 0$ if $j'+\ell' \ge j+\ell$ or $j'' + \ell'' \ge j+\ell$. Thus, for $j + \ell = m$ fixed, \eqref{formula_g_rho_mu} determines $A_{j,m-j}$ with $j \in \{0,\ldots,m\}$ inductively in $j$ starting with $j = 0$. Then all coefficients $A_{j,\ell}$ with $(j,\ell) \in \N_0^2$ are determined by induction in $m = j + \ell$. Hence, problem~\eqref{problem_g_rho_mu} has a solution that is analytic in $(\varrho,\mu)$ in a neighborhood of $(\varrho,\mu) = 0$, thus meeting the boundary conditions \eqref{bc_g_rho_mu_k}. On identifying $\varrho = r^3$, this is in particular a solution to \eqref{problem_g_r_mu} that is analytic in $(r,\mu)$ in a neighborhood of $(r,\mu) = (0,0)$, for which we have proved uniqueness beforehand.
\end{proof}
%

\subsubsection{Uniqueness}

%
\begin{lemma}\label{lem:unstable_unique}
Let $n \in (0,3)$ and $k > 0$. Suppose that $\mu_1$ and $\mu_2$ are continuously differentiable in $H > 0$ and solve
\begin{subequations}\label{ass_unique}
\begin{align}\label{unique_unstable}
\left(H \tfrac{\d}{\d H} - 1\right) \mu_j &= \rbar{g}{(\varrho,\mu) = \left(H^{3-n},\mu_j\right)} - \tfrac{2}{3 k^3 (3-n)} H^{3-n} && \mbox{for $H > 0$ sufficiently small}.
\end{align}
Further suppose that there exists $\delta > 0$ such that
\begin{align}\label{limit_unique}
\lim_{H \searrow 0}  H^{-\delta} \mu_j &= 0 && \mbox{for $j \in \{1,2\}$.}
\end{align}
\end{subequations}
Then it holds
\begin{subequations}\label{limit_diff_unique}
\begin{align}\label{limit_unique_quant}
\mu_j &= \begin{cases} \order\left(H\right) & \text{for } 0 < n < 2, \\
\order\left(- H \ln H\right) & \text{for } n = 2, \\
\order\left(H^{3-n}\right) & \text{for } 2 < n < 3, \end{cases} && \mbox{as $H \downarrow 0$,}
\end{align}
and there exists a constant $\beta \in \R$ such that
\begin{align}\label{diff_muj}
\mu_1 - \mu_2 &= \begin{cases} \beta H \left(1 + \order\left(H\right)\right) & \text{for } 0 < n < 2, \\
\beta H \left(1 + \order\left(- H \ln H\right)\right) & \text{for } n = 2, \\
\beta H \left(1 + \order\left(H^{3-n}\right)\right) & \text{for } 2 < n < 3, \end{cases} && \mbox{as $H \downarrow 0$}.
\end{align}
\end{subequations}
\end{lemma}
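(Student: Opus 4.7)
The plan is to prove \eqref{limit_unique_quant} first by bootstrapping, then deduce \eqref{diff_muj} by writing the equation for the difference $w := \mu_1 - \mu_2$ as a linear first-order ODE that can be solved in closed form.

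For \eqref{limit_unique_quant}, I would rewrite \eqref{unique_unstable} as $(\mu_j/H)'(H) = H^{-2}\bigl[g(H^{3-n}, \mu_j) - \tfrac{2}{3k^3(3-n)} H^{3-n}\bigr]$. Since $g$ and its first partial derivatives vanish at $(\varrho,\mu) = (0,0)$ by \eqref{bc_g_rho_mu_k}, Taylor's theorem gives $g(\varrho, \mu) = O(\varrho^2 + \varrho\mu + \mu^2)$ near the origin, so the right-hand side of \eqref{unique_unstable} has the form $-\tfrac{2}{3k^3(3-n)} H^{3-n} + O\bigl(H^{2(3-n)} + H^{3-n}|\mu_j| + \mu_j^2\bigr)$. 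The bootstrap step: if $\mu_j = O(H^\gamma)$ with $\gamma > 0$ (initially $\gamma = \delta$ from \eqref{limit_unique}), then the right-hand side is $O(H^\beta)$ with $\beta := \min(3-n, 2\gamma)$, and integrating $(\mu_j/H)' = O(H^{\beta-2})$ from $H_0$ down to $H$ yields $\mu_j = O(H^{\min(1,\beta)})$, with a logarithmic correction in the marginal case $\beta = 1$. Iterating this step doubles $\gamma$ (capped at $3-n$), so finitely many iterations suffice. For $n < 2$ one has $3 - n > 1$, and the procedure terminates at $\mu_j = O(H)$; for $n = 2$ the terminal exponent $\beta = 1$ introduces the factor $\int_{H}^{H_0} \tilde H^{-1} d\tilde H = \ln(H_0/H)$, yielding $\mu_j = O(-H\ln H)$; for $2 < n < 3$ the forcing $H^{3-n}$ with $3-n < 1$ dominates the homogeneous mode and gives $\mu_j = O(H^{3-n})$.

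For \eqref{diff_muj}, setting $w := \mu_1 - \mu_2$ and applying the mean value theorem gives $Hw' - w = g(H^{3-n}, \mu_1) - g(H^{3-n}, \mu_2) = a(H)\, w$ with $a(H) := \int_0^1 \partial_\mu g\bigl(H^{3-n}, \mu_2 + \tau w\bigr)\, d\tau$. Since $\partial_\mu g(0, 0) = 0$ by \eqref{bc_g_rho_mu_k} and $g$ is smooth, $\partial_\mu g(\varrho, \mu) = O(|\varrho| + |\mu|)$, so $a(H) = O(H^{3-n} + |\mu_j|)$. Inserting the bounds from \eqref{limit_unique_quant} gives $a(H) = O(H)$, $O(-H\ln H)$, and $O(H^{3-n})$ in the three regimes $n<2$, $n=2$, and $2<n<3$, respectively. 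The linear ODE $w' = \tfrac{1 + a(H)}{H}\, w$ is separable with explicit solution $w(H) = w(H_0)\, \tfrac{H}{H_0} \exp\!\bigl(\int_{H_0}^H a(\tilde H)/\tilde H\, d\tilde H\bigr)$. In each regime the integrand $a(\tilde H)/\tilde H$ is integrable on $(0, H_0]$, so $\beta := \lim_{H \downarrow 0} w(H)/H$ exists in $\R$; computing $\int_0^H a(\tilde H)/\tilde H\, d\tilde H$ in each case ($O(H)$, $O(-H\ln H)$, $O(H^{3-n})$) and using $\exp(x) = 1 + O(x)$ for small $x$ yields \eqref{diff_muj}.

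The main technical hurdle is the bootstrap for \eqref{limit_unique_quant}: the three regimes must be handled together, with the resonant case $n = 2$ requiring particular care because the marginal integrability at $\beta = 1$ produces the logarithmic factor. Once \eqref{limit_unique_quant} is in hand, the derivation of \eqref{diff_muj} is a routine integration of a first-order linear ODE.
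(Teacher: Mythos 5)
Your argument is correct. For \eqref{limit_unique_quant} you take a genuinely different route from the paper: you bootstrap, treating the entire right-hand side of $(\mu_j/H)' = H^{-2}\left[\cdots\right]$ as an error controlled by the Taylor estimate $g(\varrho,\mu) = \order\left(\varrho^2 + \verti{\varrho\mu} + \mu^2\right)$ furnished by \eqref{bc_g_rho_mu_k}, integrating, and iterating until the exponent saturates at $\min(1,3-n)$, with the logarithm emerging at the marginal step. The paper instead obtains the bound in a single pass via an integrating factor: it factors $\rbar{g}{(H^{3-n},\mu_j)} = \rbar{a}{(H^{3-n},\mu_j)}\,\mu_j$ with $a\to 0$ at the origin, multiplies the resulting linear equation by $H^{-1}\exp\left(\int_H^\eps a\,\d\tilde H/\tilde H\right)$, and integrates once, using the hypothesis \eqref{limit_unique} only to keep the exponential factor bounded. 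Your route is longer but somewhat more robust, as it needs only the vanishing of $g$ and its first partial derivatives at the origin; the paper's factorization $g = a\mu$ with $a$ analytic and $a(0,0)=0$ tacitly presupposes $g(\varrho,0)\equiv 0$, which does not quite hold ($g(\varrho,0) = \order(\varrho^2)$ is generically nonzero), so strictly a subleading $\order\left(H^{2(3-n)}\right)$ term should be split off from $g$ and absorbed into the forcing — a refinement your Taylor bound handles automatically. For \eqref{diff_muj} your proof coincides with the paper's: both factor $\rbar{g}{\mu_1} - \rbar{g}{\mu_2} = c\,(\mu_1-\mu_2)$ with $c \to 0$ (your mean-value integral is exactly the paper's $c$), integrate the separable linear ODE for $H^{-1}(\mu_1-\mu_2)$, show $\int_0 c\,\d H/H$ converges using \eqref{limit_unique_quant}, extract $\beta$, and expand $\exp$. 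One detail worth spelling out in your write-up: for $n \ne 2$ an intermediate bootstrap pass may land on the marginal exponent and yield $\mu_j = \order(-H\ln H)$ rather than $\order(H)$; you should then feed $\mu_j = \order\left(H^{1-\eps}\right)$ back in and observe that the next pass gives $\beta = \min\left(3-n,\,2-2\eps\right) > 1$ for $n<2$ and $\eps$ small, so the logarithm is transient and the iteration terminates at $\order(H)$ as claimed.
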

\begin{proof}
We have
\begin{eqnarray*}
\left(H \tfrac{\d}{\d H} - 1\right) \mu_j &\stackrel{\eqref{unique_unstable}}{=}& \rbar{g}{(\varrho,\mu) = \left(H^{3-n},\mu_j\right)} - \tfrac{2}{3 k^3 (3-n)} H^{3-n} \\
&=& \rbar{a}{(\varrho,\mu) = \left(H^{3-n},\mu_j\right)} \mu_j - \tfrac{2}{3 k^3 (3-n)} H^{3-n},
\end{eqnarray*}
where by \eqref{bc_g_rho_mu_k} the dependent variable $a$ is a function of $(\varrho,\mu,k)$ being analytic in $(\varrho,\mu)$ and smooth in $k > 0$ such that $a = 0$ at $(\varrho,\mu) = (0,0)$. This implies
\begin{align*}
& H \tfrac{\d}{\d H} \left(H^{-1} \exp\left(\int_H^\eps \rbar{a}{(\varrho,\mu) = \left(\tilde H^{3-n},\rbar{\mu_j}{H = \tilde H}\right)} \tfrac{\d\tilde H}{\tilde H}\right) \mu_j\right) \\
& \quad = - \tfrac{2}{3 k^3 (3-n)} H^{2-n} \exp\left(\int_H^\eps \rbar{a}{(\varrho,\mu) = \left(\tilde H^{3-n},\rbar{\mu_j}{H = \tilde H}\right)} \tfrac{\d\tilde H}{\tilde H}\right)
\end{align*}
for $\eps > 0$ small and thus
\begin{align*}
& H^{-1} \exp\left(\int_H^\eps \rbar{a}{(\varrho,\mu) = \left(\tilde H^{3-n},\rbar{\mu_j}{H = \tilde H}\right)} \tfrac{\d\tilde H}{\tilde H}\right) \mu_j \\
& \quad = \eps^{-1} \rbar{\mu_j}{H = \eps} + \tfrac{2}{3 k^3 (3-n)} \int_H^\eps H_1^{2-n} \exp\left(\int_{H_1}^\eps \rbar{a}{(\varrho,\mu) = \left(H_2^{3-n},\rbar{\mu_j}{H = H_2}\right)} \tfrac{\d H_2}{H_2}\right) \tfrac{\d H_1}{H_1} \\
& \quad = \begin{cases} \order\left(1\right) & \text{for } 0 < n < 2, \\
\order\left(- \ln H\right) & \text{for } n = 2, \\
\order\left(H^{2-n}\right) & \text{for } 2 < n < 3, \end{cases}
\end{align*}
as $H \downarrow 0$. This gives \eqref{limit_unique_quant}.

\medskip

For proving \eqref{diff_muj}, observe that
\begin{eqnarray*}
H^2 \tfrac{\d}{\d H} \left(H^{-1} (\mu_1 - \mu_2)\right) &=& \left(H \tfrac{\d}{\d H} - 1\right) (\mu_1 - \mu_2) \\
&\stackrel{\eqref{unique_unstable}}{=}& \rbar{g}{(\varrho,\mu) = \left(H^{3-n},\mu_1\right)} - \rbar{g}{(\varrho,\mu) = \left(H^{3-n},\mu_2\right)} \\
&=& \rbar{c}{\varrho = H^{3-n}} \left(\mu_1 - \mu_2\right),
\end{eqnarray*}
that is,
\begin{align}\label{ode_diff}
H \tfrac{\d}{\d H} \left(H^{-1} (\mu_1 - \mu_2)\right) = \rbar{c}{\varrho = H^{3-n}} H^{-1} \left(\mu_1 - \mu_2\right) && \mbox{for $H > 0$},
\end{align}
where by \eqref{bc_g_rho_mu_k} the dependent variable $c$ is a function of $(\mu_1,\mu_2,\varrho,k)$ which is analytic in $(\mu_1,\mu_2,\varrho)$ and additionally $c = 0$ at $(\mu_1,\mu_2,\varrho) = (0,0,0)$. Integrating \eqref{ode_diff} from $H = \eps > 0$ yields
\[
H^{-1} \left(\mu_1 - \mu_2\right) = \eps^{-1} \rbar{\left(\mu_1-\mu_2\right)}{H = \eps} \exp\left(-\int_H^\eps \rbar{c}{(\mu_1,\mu_2,\varrho) = \left(\rbar{\mu_1}{H = \tilde H}, \rbar{\mu_2}{H = \tilde H}, \tilde H^{3-n}\right)} \tfrac{\d\tilde H}{\tilde H}\right).
\]
Because of \eqref{limit_unique_quant} the integral $\int_0^\eps \rbar{c}{\varrho = H^{3-n}} \tfrac{\d H}{H}$ is finite, so that the limit
\[
\beta := \lim_{H \searrow 0} H^{-1} \left(\mu_1 - \mu_2\right)
\]
exists. Integrating \eqref{ode_diff} from $H = 0$ then yields
\begin{eqnarray*}
H^{-1} (\mu_1 - \mu_2) &=& \beta \exp\left(\int_0^H \rbar{c}{(\mu_1,\mu_2,\varrho) = \left(\rbar{\mu_1}{H = \tilde H}, \rbar{\mu_2}{H = \tilde H}, \tilde H^{3-n}\right)} \tfrac{\d\tilde H}{\tilde H}\right) \\
&\stackrel{\eqref{limit_unique_quant}}{=}& \begin{cases} \beta \left(1 + \order\left(H\right)\right) & \text{for } 0 < n < 2, \\
\beta \left(1 + \order\left(- H \ln H\right)\right) & \text{for } n = 2, \\
\beta \left(1 + \order\left(H^{3-n}\right)\right) & \text{for } 2 < n < 3, \end{cases}
\end{eqnarray*}
from which \eqref{diff_muj} is immediate.
\end{proof}
%

\subsection{Fixed-point problem}\label{sec:fixed}
In this subsection, we characterize a one-parametric family of solutions to the ordinary initial-value problem (IVP) \eqref{ass_unique} of Lemma~\ref{lem:unstable_unique}. This is split in the non-resonant case in \S\ref{subsect_nonresonant} and the resonant case in \S\ref{subsect_resonant}. Note that resonances have been characterized in \cite[\S4.3]{BGK} in case of the source-type self-similar solution with dynamic nonzero contact angle and that the resonances in the situation at hand are the same. The relevant resonances occur for values $n = 3 - \frac 1 m$, where $m \in \N$.

\medskip

In what follows, suppose that $\mu \in C^0\left([0,\infty)\right) \cap C^1\left((0,\infty)\right)$ meets \eqref{ass_unique}, that is,
\begin{subequations}\label{first_order_mu}
\begin{align}\label{first_order_ode}
\left(H \tfrac{\d}{\d H} - 1\right) \mu &= \rbar{g}{\varrho = H^{3-n}} - \tfrac{2}{3 k^3 (3-n)} H^{3-n} && \mbox{for $H > 0$ sufficiently small}
\end{align}
and
\begin{align}\label{first_order_limit}
\mu &= 0 && \mbox{at } H = 0.
\end{align}
\end{subequations}
In view of \eqref{limit_diff_unique} of Lemma~\ref{lem:unstable_unique} a solution $\mu$ to \eqref{first_order_mu} cannot be expected to be smooth. In what follows we characterize the singularity of $\mu$ in $H = 0$ and the dependence on $k > 0$ explicitly.

\subsubsection{Non-resonant case}\label{subsect_nonresonant}
Consider $n\in(0,3) \setminus \left\{3-\frac{1}{m} \colon m\in\N\right\}$. We unfold the singularity in $H = 0$ by identifying
\begin{equation}\label{identify_nonresonant}
\mu = w+\xi \quad \mbox{provided $\xi = b H$ and $\varrho = H^{3-n}$}
\end{equation}
for a constant $b \in \R$, where $w$ is a function of $(\xi,\varrho,k)$ such that
\begin{subequations}\label{problem_w_nonresonant}
\begin{align}\label{eq4.2}
\left(\xi \partial_\xi + (3-n) \varrho \partial_\varrho-1\right) w &= \rbar{g}{\mu = w+\xi} - \tfrac{2}{3 k^3 (3-n)} \varrho && \mbox{around } (\xi,\varrho)=(0,0)
\end{align}
subject to the boundary conditions
\begin{align}\label{bc_nonresonant}
\left(w,\partial_\xi w\right) &= (0,0) && \mbox{at } (\xi,\varrho) = (0,0).
\end{align}
\end{subequations}
In the following proposition we will construct a solution to \eqref{problem_w_nonresonant} which is analytic in $(\xi,\varrho)$ and smoothly depends on $k>0$. Using \eqref{s_h}, \eqref{def_r_q_p}, \eqref{conv_qcg} of Lemma~\ref{lem:conv_cg}, Corollary~\ref{cor:ode_unstable}, Lemma~\ref{lem:unstable_unique}, and the existence and uniqueness result of \cite{CG} or Theorem~\ref{th:ex_un} in Appendix~\ref{app:ex_un}, it follows that there exists exactly one $b = b_\mathrm{CG} \in \R$ such that $\mu_\mathrm{CG} = w+\xi$ provided $\xi = b_\mathrm{CG} H$ and $\varrho = H^{3-n}$.

\begin{proposition}[non-resonant case]\label{prop:nonresonant}
For $n\in(0,3) \setminus \left\{3-\frac{1}{m} \colon m \in \N\right\}$ problem~\eqref{problem_w_nonresonant} has a solution $w$ which is analytic in $(\xi,\varrho)$ in a neighborhood of $(\xi,\varrho) = (0,0)$ and smooth in $k > 0$.
\end{proposition}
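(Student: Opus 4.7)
The plan is to construct $w$ as a convergent formal power series in $(\xi,\varrho)$ near the origin. I would write $w = \sum_{(j,\ell)\in I} A_{j,\ell}\,\xi^j\varrho^\ell$ over the index set $I := \N_0^2 \setminus \{(0,0),(1,0)\}$, so that the boundary conditions \eqref{bc_nonresonant} are automatic. The linear operator $L := \xi\partial_\xi + (3-n)\varrho\partial_\varrho - 1$ on the left-hand side of \eqref{eq4.2} is diagonal in the monomial basis with eigenvalues $\lambda_{j,\ell} := j + (3-n)\ell - 1$. A brief case split ($\lambda_{j,0} = j-1 \geq 1$ for $j \geq 2$; $\lambda_{j,\ell} \geq 3-n > 0$ for $j \geq 1$ and $\ell \geq 1$; and $\lambda_{0,\ell} = (3-n)\ell - 1$, which is nonzero by the non-resonance hypothesis $n \notin \{3-\tfrac 1 m : m \in \N\}$ and tends to $+\infty$) yields a uniform lower bound $|\lambda_{j,\ell}| \geq c_n > 0$ over $(j,\ell) \in I$. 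The only vanishing eigenvalue over $\N_0^2$ is $\lambda_{1,0} = 0$, precisely the one removed from $I$ by the boundary data.

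Next, I would expand $g|_{\mu=w+\xi} - \tfrac{2}{3k^3(3-n)}\varrho$ as a formal power series $\sum B_{j,\ell}\,\xi^j\varrho^\ell$ and match coefficients with $Lw = \sum \lambda_{j,\ell} A_{j,\ell}\,\xi^j\varrho^\ell$. Because by Corollary~\ref{cor:ode_unstable} the variable $g$ is analytic in $(\varrho,\mu)$ and vanishes to total order two at $(0,0)$, substituting $\mu = w+\xi$ produces a coefficient $B_{j,\ell}$ that depends polynomially only on those $A_{j',\ell'}$ with $j' + \ell' < j+\ell$ and smoothly on $k > 0$. Hence the recursion $A_{j,\ell} = \lambda_{j,\ell}^{-1} B_{j,\ell}$ determines all coefficients uniquely and $k$-smoothly by induction on $j+\ell$, in analogy with the recursion \eqref{rec_g_rho_mu} encountered in the proof of Corollary~\ref{cor:ode_unstable}.

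To upgrade this formal solution to an analytic one, I would recast \eqref{problem_w_nonresonant} as a fixed-point equation $u = T(u) := L^{-1}\bigl(g|_{\mu=u+\xi} - \tfrac{2}{3k^3(3-n)}\varrho\bigr)$ in a Banach algebra of functions analytic on a small bidisc $\{|\xi|<r,\,|\varrho|<r\}$ and vanishing together with their $\xi$-derivative at the origin, equipped for instance with a weighted Taylor-coefficient norm $\|u\| := \sum_{(j,\ell)\in I} |A_{j,\ell}|\,r^{j+\ell}$ that is submultiplicative. The lower bound $|\lambda_{j,\ell}| \geq c_n$ gives $\|L^{-1}\| \leq c_n^{-1}$, and the second-order vanishing of $g$ at the origin makes $T$ map a small closed ball into itself and be a contraction for $r$ sufficiently small; the unique fixed point is the desired $w$. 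Smooth dependence of $g$ (and hence of $T$) on the parameter $k > 0$ then transfers through the contraction-mapping principle with parameters to yield smoothness of $w$ in $k$.

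The main obstacle is the uniform control of the small divisors $\lambda_{j,\ell}$: the non-resonance condition is exactly what rules out $\lambda_{0,m} = 0$ for some $m \in \N$, the only potential resonance not already killed by the boundary data, and guarantees that $\{(3-n)\ell-1\}_{\ell \geq 1}$ stays bounded away from zero. Were a resonance present, the formal recursion would develop an irreparable obstruction at degree $\ell = m$ inside the polynomial ring in $(\xi,\varrho)$, forcing the logarithmic correction that will be incorporated into the ansatz of Proposition~\ref{prop:resonant} for the resonant case.
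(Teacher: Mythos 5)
Your proposal follows essentially the same route as the paper: interpret the linear part $\xi\partial_\xi + (3-n)\varrho\partial_\varrho - 1$ as a diagonal operator with eigenvalues $j+(3-n)\ell-1$, use the non-resonance hypothesis to bound these away from zero on the index set $\II$, equip a space of analytic germs with a sub-multiplicative Taylor-coefficient norm, and close the argument with a fixed-point-type theorem. The one substantive difference is that you invoke Banach's fixed-point theorem with parameters to get smoothness in $k$, whereas the paper deliberately switches to the Banach-space valued implicit-function theorem (and then bootstraps via the Neumann-series formula for $(D_w\cG)^{-1}$) because it considers the parameter-dependence argument cleaner that way; both work, and your weighting $r^{j+\ell}$ versus the paper's $\eps^{j+2\ell}$ is likewise an immaterial choice since each is sub-multiplicative and makes $\TT = L^{-1}$ bounded.
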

\begin{proof}
The proof of existence of an in $(\xi,\varrho)$ analytic solution to \eqref{problem_w_nonresonant} follows with almost the same reasoning as in \cite[Proposition~4.9]{BGK} using Banach's fixed-point theorem. Since we additionally prove smoothness in $k > 0$, we apply the Banach-space valued version of the implicit-function theorem instead of Banach's fixed-point theorem. Therefore, we rewrite \eqref{problem_w_nonresonant} in the following way: Using a power-series expansion around $(\xi,\varrho)= (0,0)$, it is straight-forward to verify that \eqref{problem_w_nonresonant} is equivalent to
\begin{equation}\label{implicit_nonresonant}
    \cG = 0 \quad \mbox{with} \quad \cG := w - \TT\left[\rbar{g}{\mu = w+\xi} - \tfrac{2}{3 k^3 (3-n)} \varrho\right],
\end{equation}
where the linear operator $\TT$ is defined for in $(\xi,\varrho)$ around $(\xi,\varrho) = (0,0)$ analytic functions $\phi$ with $(\phi,\partial_\xi \phi) = (0,0)$ in $(\xi,\varrho) = (0,0)$ by
\begin{equation*}
\TT \phi := \sum _{(j,\ell)\in \II} \frac{1}{(j+(3-n)\ell-1) j! \ell!} \rbar{\partial_\xi^j \partial_\varrho^\ell \phi}{(\xi,\varrho) = (0,0)} \xi^j \varrho^\ell
\end{equation*}
with $\II := (\mathbb{N}_0)^2\backslash\{(0,0),(1,0)\}$ in view of \eqref{bc_nonresonant}. Note that the choice of $n \notin \left\{3 - \frac 1 m \colon m \in \N\right\}$ and the definition of $\II$ ensure that $j+(3-n)\ell-1 \ne 0$ for all $(j,\ell) \in \II$. In order to construct a solution $w$ to \eqref{implicit_nonresonant}, we use the norm
\begin{equation*}
\vertii{\phi}_\eps := \sum_{(j,\ell) \in \N_0^2} \frac{\eps^{j+2\ell}}{j! \ell!} \verti{\rbar{\partial_\xi^j \partial_\varrho^\ell \phi}{(\xi,\varrho) = (0,0)}}
\end{equation*}
for in $(\xi,\varrho)$ around $(\xi,\varrho) = (0,0)$ analytic $\phi$ with $(\phi,\partial_\xi \phi) = (0,0)$ in $(\xi,\varrho) = (0,0)$, where $\eps > 0$ will be chosen sufficiently small. The corresponding Banach space of all such $\phi$ with $\vertii{\phi}_\eps < \infty$ is denoted by $W_\eps$. From the definition, it is elementary to see that $\vertii{\cdot}_\eps$ is sub-multiplicative, that is, it holds $\vertii{\phi_1 \phi_2}_\eps \le \vertii{\phi_1}_\eps \vertii{\phi_2}_\eps$ for $\phi_1, \phi_2 \in W_\eps$. One further obtains
\begin{align*}
\vertii{\TT \phi}_\eps &= \sum_{(j,\ell)\in \II} \frac{\eps^{j+2\ell}}{j! \ell! \verti{j+(3-n)\ell-1}} \verti{\rbar{\partial_\xi^j \partial_\varrho^\ell \phi}{(\xi,\varrho) = (0,0)}} \le C \sum_{(j,\ell) \in \II} \frac{\eps^{j+2\ell}}{j! \ell!} \verti{\rbar{\partial_\xi^j \partial_\varrho^\ell \phi}{(\xi,\varrho) = (0,0)}} \\
&= C \vertii{\phi}_\eps,
\end{align*}
where $C^{-1} := \min_{(j,\ell) \in \II} \verti{j + (3-n) \ell - 1} > 0$. Hence, $W_\eps \owns \phi \mapsto \cT \phi \in W_\eps$ is a bounded linear operator and thus in particular analytic. For any $w \in W_\eps$, we recognize that by the chain rule $\cG$ is analytic in $w$ with G\^ateaux (and Fr\'echet) derivative
\[
(D_w \cG)\phi = \phi - \cT\left[\rbar{\partial_\mu g}{\mu = w + \xi} \phi\right],
\]
where $\phi \in W_\eps$. With help of Corollary~\ref{cor:ode_unstable} it follows that for $w \in W_\eps$ such that $\vertii{w}_\eps < \delta$ with $\delta > 0$ sufficiently small and $\eps > 0$ sufficiently small there exists $C_1 < \infty$ independent of $\eps$ and $\delta$ such that
\[
\vertii{\cT\left[\rbar{\partial_\mu g}{\mu = w + \xi} \phi\right]}_\eps \le C \vertii{\rbar{\partial_\mu g}{\mu = w + \xi} \phi}_\eps \le C \vertii{\rbar{\partial_\mu g}{\mu = w + \xi}}_\eps \vertii{\phi}_\eps \stackrel{\eqref{bc_g_rho_mu_k}}{\le} C_1 \left(\eps+\delta\right) \vertii{\phi}_\eps.
\]
This implies that for $\delta > 0$ and $\eps > 0$ sufficiently small, $D_w \cG$ is invertible for $w \in W_\eps$ with $\vertii{w}_\eps < \delta$ by the Neumann series, that is, $W_\eps \owns \phi \mapsto D_w \cG \phi \in W_\eps$ is for $w \in W_\eps$ with $\vertii{w}_\eps < \delta$ an isomorphism of Banach spaces.

\medskip

Now, by the chain rule we recognize that $\cG$ has infinitely many mixed G\^ateaux derivatives in directions $w \in W_\eps$ and $k > 0$, so that in particular $W_\eps \times (0,\infty) \owns (w,k) \mapsto \cG \in \R$ is continuously Fr\'echet differentiable. The Banach-space valued implicit-function theorem yields for $\eps > 0$ and $\delta > 0$ small existence of a unique and in $k > 0$ continuously differentiable $w = w_k$ such that \eqref{implicit_nonresonant} holds true. Hence, $w_k$ in particular solves \eqref{problem_w_nonresonant}. Implicitly differentiatiating \eqref{implicit_nonresonant} yields $\partial_k w_k = - \left(\rbar{D_w \cG}{w = w_k}\right)^{-1} \rbar{\partial_k \cG}{w = w_k}$. Now, $\rbar{\partial_k \cG}{w = w_k}$ is continuously differentiable in $k > 0$ and because
\[
\left(\rbar{D_w \cG}{w = w_k}\right)^{-1} = \sum_{j = 0}^\infty \left(\cT\left[\rbar{\partial_\mu g}{\mu = w_k + \xi} \cdot\right]\right)^j,
\]
we see by partially differentiating the above series in $k > 0$ that $\left(\rbar{D_w \cG}{w = w_k}\right)^{-1}$ is continuously differentiable in $k > 0$. Hence, $w_k$ is twice continuously differentiable in $k > 0$ and a bootstrap argument yields smoothness in $k > 0$. As a consequence, we have proved the theorem for $w = w_k$.
\end{proof}
%

\subsubsection{Resonant case}\label{subsect_resonant}
Consider the resonant case $n = 3-\frac{1}{m}$ for an $m \in \N$. We now identify
\begin{equation}\label{identify_resonant}
\mu = w+\xi \quad \mbox{if $\xi = b H$, $\varrho = H^{3-n} = H^{\frac 1 m}$, and $\sigma = H \ln H$}
\end{equation}
for a constant $b \in \R$, where $w$ is a function of $(\xi,\varrho,\sigma,k)$ such that in view of \eqref{first_order_mu} we have
\begin{subequations}\label{problem_w_resonant}
\begin{align}
    \left(m \xi \partial_\xi + \varrho \partial_\varrho + m \left(\sigma+\varrho^m\right) \partial_\sigma - m\right) w &= m \rbar{g}{\mu = w+\xi} - \frac{2m}{3 k^3(3-n)} \varrho && \mbox{around } (\xi,\varrho,\sigma) = (0,0,0), \label{pde_resonant}\\
    \left(w,\partial_\xi w,\partial_\varrho^m w\right) &= (0,0,0) && \mbox{at } (\xi,\varrho,\sigma) = (0,0,0). \label{bc_resonant}
\end{align}
\end{subequations}
The condition $\partial_\varrho^m w = 0$ at $(\xi,\varrho,\sigma) = (0,0,0)$ is necessary in order to exclude non-uniqueness of $w$ under the identification \eqref{identify_resonant}.

\medskip

The following proposition provides an existence result of an in $(\xi,\varrho,\sigma)$ analytic solution to \eqref{problem_w_resonant} which smoothly depends on $k > 0$. With help of \eqref{s_h}, \eqref{def_r_q_p}, \eqref{conv_qcg} of Lemma~\ref{lem:conv_cg}, Lemma~\ref{cor:ode_unstable}, Lemma~\ref{lem:unstable_unique}, and the uniqueness proved in \cite{CG} or Theorem~\ref{th:ex_un} in Appendix~\ref{app:ex_un}, we conclude that there exists exactly one $b = b_\mathrm{CG} \in \R$ such that $\mu_\mathrm{CG} = w+\xi$ provided $\xi = b_\mathrm{CG} H$ and $\varrho = H^{3-n}$.

\begin{proposition}[resonant case]\label{prop:resonant}
Suppose $n = 3-\frac{1}{m}$ for an $m\in\N$. Then \eqref{problem_w_resonant} has a solution $w$ which is analytic in $(\xi,\varrho,\sigma)$ around $(\xi,\varrho,\sigma) = (0,0,0)$ and smooth in $k > 0$.
\end{proposition}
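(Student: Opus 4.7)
The plan is to mirror the Banach-space implicit-function-theorem argument of Proposition~\ref{prop:nonresonant}, with modifications to accommodate the logarithmic variable $\sigma = H \ln H$ and the non-diagonal nature of the linearization. I would rewrite \eqref{problem_w_resonant} as the implicit equation
\begin{equation*}
    \cG = 0 \quad \mbox{with} \quad \cG := w - \TT\left[m \rbar{g}{\mu = w+\xi} - \tfrac{2m}{3 k^3 (3-n)} \varrho\right],
\end{equation*}
where $\TT$ is a right-inverse of the linear operator $L := m \xi \partial_\xi + \varrho \partial_\varrho + m(\sigma+\varrho^m) \partial_\sigma - m$ appearing on the left-hand side of \eqref{pde_resonant}. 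The new difficulty compared to the non-resonant case is the coupling term $m \varrho^m \partial_\sigma$ in $L$, which arises because $n = 3 - \tfrac{1}{m}$ produces a resonance in the indicial polynomial and which mixes coefficients of $w$ at shifted multi-indices.

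To define $\TT$, I would analyze the action of $L$ on monomials:
\begin{equation*}
L(\xi^j \varrho^\ell \sigma^p) = (mj+\ell+mp-m) \xi^j \varrho^\ell \sigma^p + mp \, \xi^j \varrho^{\ell+m} \sigma^{p-1}.
\end{equation*}
The resonant triples, at which $mj+\ell+mp-m$ vanishes, are exactly $\{(1,0,0),(0,0,1),(0,m,0)\}$. Along each ``chain'' $\{(j,\ell-km,p+k) : 0 \le k \le \lfloor \ell/m \rfloor\}$ the indicial value is constant, so the linear recursion
\begin{equation*}
(mj+\ell+mp-m) a_{j,\ell,p} + m(p+1) a_{j,\ell-m,p+1} = \phi_{j,\ell,p}
\end{equation*}
can be solved backwards along each chain starting from its terminal element. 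Non-resonant chains give unique coefficients; for the resonant chain $(0,m,0) \to (0,0,1)$ one reads off the compatibility condition $\phi_{0,0,1}=0$ together with $a_{0,0,1} = \phi_{0,m,0}/m$, while the singleton resonant chain at $(1,0,0)$ forces $\phi_{1,0,0}=0$; setting $a_{1,0,0}=a_{0,m,0}=0$ as prescribed by \eqref{bc_resonant} pins down $\TT$ uniquely. The right-hand side of \eqref{pde_resonant} automatically meets both compatibility conditions, since Corollary~\ref{cor:ode_unstable} gives $g = \partial_\mu g = \partial_\varrho g = 0$ at $(\varrho,\mu) = (0,0)$, whence $\rbar{\partial_\xi\bigl[m g|_{\mu=w+\xi}\bigr]}{0} = m \partial_\mu g|_0 (1+\partial_\xi w|_0) = 0$ and $\rbar{\partial_\sigma\bigl[m g|_{\mu=w+\xi}\bigr]}{0} = m \partial_\mu g|_0 \partial_\sigma w|_0 = 0$.

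Equipping the space $W_\eps$ of analytic functions satisfying \eqref{bc_resonant} with the submultiplicative weighted norm
\begin{equation*}
\vertii{\phi}_\eps := \sum_{j,\ell,p \ge 0} \frac{\eps^{mj+\ell+mp}}{j! \ell! p!} \verti{\rbar{\partial_\xi^j \partial_\varrho^\ell \partial_\sigma^p \phi}{(\xi,\varrho,\sigma) = (0,0,0)}}
\end{equation*}
(the exponent $mj+\ell+mp$ matches the $H$-scalings $\xi \sim H$, $\varrho \sim H^{1/m}$, $\sigma \sim H$), the nonzero denominators $mj+\ell+mp-m$ are nonzero integers and thus bounded below by $1$. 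Iterating the chain recursion and absorbing the combinatorial factors $m^k(p+k)!/p!$ against the factorials in the norm yields a bound $\vertii{\TT \phi}_\eps \le C_m \vertii{\phi}_\eps$ with $C_m$ depending only on $m$. From this point the proof parallels Proposition~\ref{prop:nonresonant}: Corollary~\ref{cor:ode_unstable} combined with submultiplicativity gives $\vertii{\TT[\rbar{\partial_\mu g}{\mu=w+\xi} \phi]}_\eps \le C_1(\eps+\delta) \vertii{\phi}_\eps$ whenever $\vertii{w}_\eps < \delta$, so that $D_w \cG(w,k)$ is a Banach-space isomorphism by Neumann series for $\eps,\delta$ small; the Banach-space implicit-function theorem produces a unique $w = w_k$ with $\cG(w_k,k)=0$ that is continuously differentiable in $k>0$, and implicit differentiation of $\cG(w_k,k)=0$ together with a bootstrap upgrades this to smoothness in $k > 0$. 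The hardest part is the construction and norm estimate of $\TT$ in the presence of the coupling term $m\varrho^m\partial_\sigma$; once this is in place the remainder of the argument follows the non-resonant case.
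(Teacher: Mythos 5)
Your proof follows the same overall strategy as the paper's: reformulate \eqref{problem_w_resonant} as the implicit equation $\cG = 0$ with $\cG = w - \TT[\cdot]$, build the right-inverse $\TT$ of the linear part $L = m\xi\partial_\xi + \varrho\partial_\varrho + m(\sigma+\varrho^m)\partial_\sigma - m$, prove $\TT$ is bounded on a weighted-$\ell^1$ space of power series, and close via the Banach-space implicit-function theorem plus a bootstrap for smoothness in $k$. The paper gives the recursion \eqref{def_t} and defers the operator estimate $\vertii{\TT\phi}_\eps \leq C\vertii{\phi}_\eps$ to the cited prior work; your chain decomposition of $\N_0^3$ into orbits of the shift $(j,\ell,p)\mapsto(j,\ell-m,p+1)$ is the same induction made explicit and self-contained, and it correctly identifies the three resonant indices $\{(1,0,0),(0,0,1),(0,m,0)\}$, the compatibility conditions they impose, and their automatic verification from \eqref{bc_g_rho_mu_k}.

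One estimate is asserted too hastily. You write that the nonzero denominators $mj+\ell+mp-m$ are bounded below by $1$ and that this, together with ``absorbing the combinatorial factors $m^k(p+k)!/p!$ against the factorials in the norm,'' gives $\vertii{\TT\phi}_\eps \leq C_m\vertii{\phi}_\eps$. With your weight $\eps^{mj+\ell+mp}$ the weight ratio along any chain is identically $1$, so after a change of variables one must show $\sum_{k=0}^{p}m^k\bigl[p!/(p-k)!\bigr]\,|D|^{-(k+1)}$ with $D=mj+\ell+mp-m$ is uniformly bounded over all non-resonant triples; the bound $|D|\ge 1$ alone is insufficient, since with it the sum would grow like $m^p p!$. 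What actually makes the sum uniformly bounded is the sharper lower bound $|D|\ge m(p-1)$ available for every non-resonant triple with $p\ge 2$ (since $j,\ell\ge 0$), under which $\sum_{k=0}^{p}m^k\bigl[p!/(p-k)!\bigr]|D|^{-(k+1)}\le \tfrac 1 m\bigl[(1+\tfrac 1{p-1})^{p+1}-1\bigr]\le \tfrac 7 m$, with the cases $p\in\{0,1\}$ handled separately using $|D|\ge 1$. Once this is spelled out your proof is complete. Your weight exponent $mj+\ell+mp$ (matching the $H$-scalings $\xi\sim H$, $\varrho\sim H^{1/m}$, $\sigma\sim H\ln H$) differs from the paper's $j+m\ell+p$, but both yield sub-multiplicative norms and a bounded $\TT$, so the choice is inconsequential.
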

\begin{proof}
As in the proof of Proposition~\ref{prop:nonresonant}, we do not entirely rely on the reasoning in \cite[Proposition~4.10]{BGK}, establishing existence of an analytic solution in an analogous case using Banach's fixed-point theorem, but opt for an application of the implicit-function theorem in order to additionally obtain smoothness in $k > 0$.

\medskip

Therefore, we first define for an in $(\xi,\varrho,\sigma)$ around $(\xi,\varrho,\sigma) = (0,0,0)$ analytic $\phi$ the norm
\begin{equation*}
\vertii{\phi}_\eps := \sum_{(j,\ell,p)\in \N_0^3} \frac{\varepsilon^{j+m\ell+p}}{j!\ell!p!} \verti{\rbar{\partial_\xi^k \partial_\varrho^\ell \partial_\sigma^p \phi}{(\xi,\varrho,\sigma) = (0,0,0)}}.
\end{equation*}
It is easy to see that $\vertii{\cdot}_\eps$ is sub-multiplicative.

\medskip

As a second preliminary step, we consider the linear problem
\begin{subequations}\label{problem_resonant_lin}
\begin{align}
    \left(m \xi \partial_\xi + \varrho \partial_\varrho + m \left(\sigma+\varrho^m\right) \partial_\sigma - m\right) \cT\phi &= \phi && \mbox{around } (\xi,\varrho,\sigma) = (0,0,0), \label{pde_resonant_lin}\\
    \left(\cT\phi,\partial_\xi \cT\phi,\partial_\varrho^m \cT\phi\right) &= (0,0,0) && \mbox{at } (\xi,\varrho,\sigma) = (0,0,0).
\end{align}
\end{subequations}
Choosing $\phi := m \rbar{g}{\mu = w+\xi} - \frac{2m}{3 k^3(3-n)} \varrho$, we recognize that
\begin{subequations}\label{bc_phi_resonant}
\begin{align}
\phi &= m \rbar{g}{\mu = w} \stackrel{\eqref{bc_resonant}} = m \rbar{g}{\mu = 0} \stackrel{\eqref{bc_g_rho_mu_k}}{=} 0 && \mbox{at } (\xi,\varrho,\sigma) = (0,0,0), \\
\partial_\xi \phi &= m \rbar{\partial_\mu g}{\mu = w} (1+\partial_\xi w) \stackrel{\eqref{bc_resonant}} = m \rbar{\partial_\mu g}{\mu = 0}  (1+\partial_\xi w) \stackrel{\eqref{bc_g_rho_mu_k}}{=} 0 && \mbox{at } (\xi,\varrho,\sigma) = (0,0,0), \\
\partial_\sigma \phi &= m \rbar{\partial_\mu g}{\mu = w} \partial_\sigma w \stackrel{\eqref{bc_resonant}} = m \rbar{\partial_\mu g}{\mu = 0}  \partial_\sigma w \stackrel{\eqref{bc_g_rho_mu_k}}{=} 0 && \mbox{at } (\xi,\varrho,\sigma) = (0,0,0).
\end{align}
\end{subequations}
Hence, we may use the power-series expansions
\begin{align*}
\phi &= \sum_{(j,\ell,p) \in \N_0^3} \frac{1}{j! \ell! p!} \rbar{\partial_\xi^j \partial_\varrho^\ell \partial_\sigma^p \phi}{(\xi,\varrho,\sigma) = (0,0,0)} \xi^j \varrho^\ell \sigma^p, \\
\cT \phi &= \sum_{(j,\ell,p) \in \N_0^3} \frac{1}{j! \ell! p!} \rbar{\partial_\xi^j \partial_\varrho^\ell \partial_\sigma^p \cT \phi}{(\xi,\varrho,\sigma) = (0,0,0)} \xi^j \varrho^\ell \sigma^p,
\end{align*}
where in view of \eqref{bc_resonant} and \eqref{bc_phi_resonant} we have
\begin{subequations}\label{resonant_discrete_00}
\begin{align}
\partial_\xi^j \partial_\varrho^\ell \partial_\sigma^p \phi &= 0 && \mbox{in } (\xi,\varrho,\sigma) = (0,0,0) \mbox{ if } (j,\ell,p) \in \N_0^3 \setminus \II, \\
\partial_\xi^j \partial_\varrho^\ell \partial_\sigma^p \cT\phi &= 0 && \mbox{in } (\xi,\varrho,\sigma) = (0,0,0) \mbox{ if } (j,\ell,p) \in \N_0^3 \setminus \JJ,\label{resonant_discrete_t_00}
\end{align}
\end{subequations}
where $\II := \N_0^3 \setminus \left\{(0,0,0),(1,0,0),(0,0,1)\right\}$ and $\JJ := \N_0^3 \setminus \left\{(0,0,0),(1,0,0),(0,m,0)\right\}$. Inserted into \eqref{pde_resonant_lin}, this yields for $(j,\ell,p) \in \N_0^3$ with $\ell < m$,
\begin{subequations}\label{resonant_discrete}
\begin{align}\label{resonant_discrete_l<m}
\left(m j + \ell + m p - m\right) \partial_\xi^j \partial_\varrho^\ell \partial_\sigma^p \cT \phi &= \partial_\xi^j \partial_\varrho^\ell \partial_\sigma^p \phi && \mbox{at } (\xi,\varrho,\sigma) = (0,0,0),
\end{align}
while for $\ell \ge m$ it holds
\begin{align}\label{resonant_discrete_lgem}
\left(m j + \ell + m p - m\right) \partial_\xi^j \partial_\varrho^\ell \partial_\sigma^p \cT \phi + m \frac{\ell!}{(\ell-m)!} \partial_\xi^j \partial_\varrho^{\ell-m} \partial_\sigma^{p+1} \cT \phi &= \partial_\xi^j \partial_\varrho^\ell \partial_\sigma^p \phi && \mbox{at } (\xi,\varrho,\sigma) = (0,0,0).
\end{align}
\end{subequations}
For $(j,\ell,p) \in \left\{(0,0,0),(1,0,0),(0,0,1)\right\}$ equation~\eqref{resonant_discrete_l<m} is fulfilled because of \eqref{resonant_discrete_00}, while for $(j,\ell,p) \in \II$ with $\ell < m$ we get
\begin{subequations}\label{def_t}
\begin{align}\label{def_t_j<m}
\partial_\xi^j \partial_\varrho^\ell \partial_\sigma^p \cT \phi \stackrel{\eqref{resonant_discrete_l<m}}{=} \frac{\partial_\xi^j \partial_\varrho^\ell \partial_\sigma^p \phi}{m j + \ell + m p - m}.
\end{align}
In the case $(j,\ell,p) = (0,m,0)$ it holds
\begin{align}\label{def_t_jlp001}
\partial_\sigma \cT \phi &\stackrel{\eqref{resonant_discrete_lgem}}{=} \partial_\varrho^m \phi && \mbox{at } (\xi,\varrho,\sigma) = (0,0,0)
\end{align}
and for $(j,\ell,p) \in \JJ$ with $\ell \ge m$ we have
\begin{align}\label{def_t_j>m}
\partial_\xi^j \partial_\varrho^\ell \partial_\sigma^p \cT \phi \stackrel{\eqref{resonant_discrete_lgem}}{=} \frac{\partial_\xi^j \partial_\varrho^\ell \partial_\sigma^p \phi}{m j + \ell + m p - m} - m \frac{\ell!}{(\ell-m)!} \frac{\partial_\xi^j \partial_\varrho^{\ell-m} \partial_\sigma^{p+1} \cT \phi}{m j + \ell + m p - m}.
\end{align}
\end{subequations}
Note that equations~\eqref{resonant_discrete_t_00} and \eqref{def_t} uniquely determine $\cT$ by complete induction. Furthermore, in the proof of \cite[Proposition~4.10]{BGK} it is shown how \eqref{def_t} imply that there exists $C < \infty$ independent of $\phi$ and $\eps > 0$ such that $\vertii{\cT \phi}_\eps \le C \vertii{\phi}_\eps$.

\medskip

As in the proof of Proposition~\ref{prop:nonresonant}, we can then reformulate \eqref{problem_w_resonant} as
\begin{equation}\label{implicit_resonant}
    \cG = 0 \quad \mbox{with} \quad \cG := w - \TT\left[m \rbar{g}{\mu = w+\xi} - \tfrac{2 m}{3 k^3 (3-n)} \varrho\right].
\end{equation}
Constructing an in $(\xi,\varrho,\sigma)$ around $(\xi,\varrho,\sigma) = (0,0,0)$ analytic and in $k > 0$ smooth solution to \eqref{implicit_resonant} follows by an application of the Banach-space valued implicit-function theorem. The proof is the same as the one given in Proposition~\ref{prop:nonresonant} as the necessary conditions, the sub-multiplicativity of $\vertii{\cdot}_\eps$, the boundedness of the linear operator $\cT$, and the boundary conditions \eqref{bc_g_rho_mu_k} on $g$ in $(\varrho,\mu) = (0,0)$, remain unchanged.
\end{proof}
%

\section{Proof of the main result\label{sec:proof_main}}
In this section, we prove the main result, Theorem~\ref{main_thm}. This is split into the characterization of two one-parametric solution manifolds $\psi_b$ and $\psi_B$, where $\psi_b$ meets \eqref{main_ode} and \eqref{BC_main_sys_1}, and $\psi_B$ fulfills \eqref{main_ode} and \eqref{BC_main_sys_2} (cf.~\S\ref{sec:solution_manifolds}). These solution manifolds are then matched in three-dimensional phase space $\left(H,\psi,\frac{\d \psi}{\d H}\right)$ using a transversality argument (cf.~\S\ref{sec:transversality}).

\subsection{Solution manifolds at the contact line and in the bulk\label{sec:solution_manifolds}}
The following two propositions characterize the solution manifolds meeting \eqref{main_ode} and \eqref{BC_main_sys_1}, and \eqref{main_ode} and \eqref{BC_main_sys_2}, respectively. The second one, Proposition~\ref{prop:manifold_bulk}, is the same as \cite[Proposition~3.1]{GGO} since the boundary condition \eqref{BC_main_sys_1} at $H = 0$ is immaterial.

\begin{proposition}[solution manifold at the contact line]\label{prop:manifold_contact}
Suppose $n \in (0,3)$. For all $b\in\R$ and $k > 0$ there exists a function $\mu_b$ of $H > 0$ such that
\begin{subequations}\label{manifold_contact}
\begin{align}\label{psib_contact}
\psi_b &= k^2(1+\mu_b) && \mbox{for } H > 0 \mbox{ sufficiently small},
\end{align}
where $\psi_b$ is twice continuously differentiable for $H > 0$ sufficiently small and right-continuous continuous at $H = 0$ solving \eqref{main_ode} for $H > 0$ sufficiently small and \eqref{BC_main_sys_1}, and $\mu_b$ is analytic in $b \in \R$ and smooth in $k > 0$ for $H > 0$ small with
\begin{align}\label{asymptotic_mub_contact}
\partial_b \mu_b &= H \left(1+o(1)\right) && \mbox{as } H \downarrow 0.
\end{align}
\end{subequations}
More precisely, in the non-resonant case $n \in (0,3) \setminus \left\{3 - \frac 1 m \colon m \in \N\right\}$ there exists a function $w$ being analytic in $(\xi,\varrho)$ around $(\xi,\varrho) = (0,0)$ and smooth in $k > 0$ such that $w = 0$ and $\partial_\xi w = 0$ at $(\xi,\varrho) = (0,0)$, and such that
\begin{subequations}\label{psib_contact_precise}
\begin{align}\label{psib_contact_precise_nonresonant}
\mu_b &= b H + \rbar{w}{(\xi,\varrho) = (b H,H^{3-n})} && \mbox{for } H > 0 \mbox{ sufficiently small.}
\end{align}
Likewise, in the resonant case $n = 3 - \frac 1 m$ where $m \in \N$, there exists a function $w$ which is analytic in $(\xi,\varrho,\sigma)$ around $(\xi,\varrho,\sigma) = (0,0,0)$ and smooth in $k > 0$ such that $w = 0$, $\partial_\xi w = 0$, $\partial_\varrho^m w = 0$ at $(\xi,\varrho,\sigma) = (0,0,0)$, and such that
\begin{align}\label{psib_contact_precise_resonant}
\mu_b &= b H + \rbar{w}{(\xi,\varrho,\sigma) = (b H,H^{3-n}, H \ln H)} && \mbox{for } H > 0 \mbox{ sufficiently small.}
\end{align}
\end{subequations}
Furthermore, there exists $b = b_\mathrm{CG} \in \R$ such that $\psi_{b_\mathrm{CG}} = \psi_\mathrm{CG}$, where $\psi_\mathrm{CG}$ is the unique classical solution to \eqref{main_sys} constructed in \cite{CG} or Theorem~\ref{th:ex_un} in Appendix~\ref{app:ex_un}.
\end{proposition}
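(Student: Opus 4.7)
The plan is to construct $\mu_b$ directly from the analytic functions $w$ furnished by Propositions~\ref{prop:nonresonant} and \ref{prop:resonant} via the substitutions \eqref{identify_nonresonant} and \eqref{identify_resonant}, verify by a chain-rule computation that the resulting $\psi_b := k^2(1+\mu_b)$ solves \eqref{main_ode} and \eqref{BC_main_sys_1}, derive the analyticity, smoothness, and asymptotic claims, and finally identify $b = b_\mathrm{CG}$ by matching $\psi_\mathrm{CG}$ against the family using Lemma~\ref{lem:unstable_unique}.

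For the construction I set, in the non-resonant case, $\mu_b := bH + w|_{(\xi,\varrho)=(bH,H^{3-n})}$, and in the resonant case $n = 3 - \tfrac{1}{m}$, $\mu_b := bH + w|_{(\xi,\varrho,\sigma)=(bH,H^{3-n},H\ln H)}$. The vanishing of $w$ and $\partial_\xi w$ at the origin (together with $\partial_\varrho^m w = 0$ in the resonant case) forces $\mu_b \to 0$ as $H \downarrow 0$, giving \eqref{BC_main_sys_1}; analyticity of $w$ in $\xi$ together with $\xi = bH$ transfers to analyticity of $\mu_b$ in $b \in \R$, while smoothness in $k > 0$ is inherited from $w$. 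The asymptotic \eqref{asymptotic_mub_contact} follows from $\partial_b \mu_b = H\bigl(1 + \partial_\xi w|_{\cdots}\bigr)$ and $\partial_\xi w \to 0$ at the origin.

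To check the ODE, I exploit the key identities $H\tfrac{\d}{\d H}\xi = \xi$, $H\tfrac{\d}{\d H}\varrho = (3-n)\varrho$, and, in the resonant case with $\varrho = H^{1/m}$ and $\sigma = H\ln H$, $H\tfrac{\d}{\d H}\sigma = \sigma + \varrho^m$. A short chain-rule computation then shows that $(H\tfrac{\d}{\d H}-1)\mu_b$ equals the left-hand side of \eqref{eq4.2} (resp.\ of $m^{-1}\cdot$\eqref{pde_resonant}) evaluated along the substitution; invoking \eqref{eq4.2} or \eqref{pde_resonant} reduces this to \eqref{first_order_ode}, which by Corollary~\ref{cor:ode_unstable} is equivalent to $\psi_b$ solving \eqref{main_ode} for $H > 0$ small. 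The care needed in the resonant case is that the inhomogeneous drift $\varrho^m\partial_\sigma$ in \eqref{pde_resonant} is exactly what is required to absorb the logarithmic contribution $H$ coming from $H\tfrac{\d}{\d H}\sigma$, but this alignment was precisely built into the formulation of Proposition~\ref{prop:resonant}.

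The matching step is the most delicate point, and it is where Lemma~\ref{lem:unstable_unique} does the real work. By Lemma~\ref{lem:conv_cg}, $\mu_\mathrm{CG}$ satisfies the decay hypothesis \eqref{limit_unique}, and so does every $\mu_b$ (which is $bH$ plus lower-order terms). Applying Lemma~\ref{lem:unstable_unique} to the pair $(\mu_\mathrm{CG}, \mu_0)$ produces $\beta \in \R$ with $\mu_\mathrm{CG} - \mu_0 = \beta H(1+o(1))$; I set $b_\mathrm{CG} := \beta$. Integrating the asymptotic \eqref{asymptotic_mub_contact} in $b$ gives $\mu_{b_\mathrm{CG}} - \mu_0 = b_\mathrm{CG} H(1+o(1))$, so
\[
\mu_\mathrm{CG} - \mu_{b_\mathrm{CG}} = (\mu_\mathrm{CG}-\mu_0) - (\mu_{b_\mathrm{CG}}-\mu_0) = o(H).
\]
A second application of Lemma~\ref{lem:unstable_unique} to $(\mu_\mathrm{CG}, \mu_{b_\mathrm{CG}})$ yields a constant $\beta'$ with $\mu_\mathrm{CG} - \mu_{b_\mathrm{CG}} = \beta' H(1+o(1))$, and comparing with the previous display forces $\beta' = 0$ and hence $\mu_\mathrm{CG} \equiv \mu_{b_\mathrm{CG}}$. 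Conceptually, Lemma~\ref{lem:unstable_unique} encodes the fact that the one-parameter family $\{\mu_b\}_{b\in\R}$ exhausts all admissible small-$H$ solutions through the transversality \eqref{asymptotic_mub_contact}; this is the structural fact that makes the matching unique and the value $b_\mathrm{CG}$ well-defined.
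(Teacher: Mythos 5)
Your proposal follows essentially the same route as the paper: build $\mu_b$ from the $w$ of Propositions~\ref{prop:nonresonant}/\ref{prop:resonant} via \eqref{identify_nonresonant}/\eqref{identify_resonant}, verify the first-order problem by the chain rule, read off analyticity in $b$, smoothness in $k$, and \eqref{asymptotic_mub_contact} from the properties of $w$, and match to $\mu_\mathrm{CG}$ via Lemma~\ref{lem:unstable_unique}. Your two-pass application of Lemma~\ref{lem:unstable_unique} to produce $b_\mathrm{CG}$ is a clean and correct unpacking of what the paper cites more tersely before Proposition~\ref{prop:nonresonant}.

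One place where your wording is imprecise and worth tightening: you say the chain rule reduces matters to \eqref{first_order_ode}, ``which by Corollary~\ref{cor:ode_unstable} is equivalent to $\psi_b$ solving \eqref{main_ode}.'' Corollary~\ref{cor:ode_unstable} only asserts one implication (solutions of \eqref{ODE_s}--\eqref{BC_ode_s} satisfy \eqref{eq_mu_g}); the converse is not a literal consequence of the corollary, because \eqref{first_order_ode} is a first-order relation and \eqref{main_ode} is second-order. The paper closes the loop by observing that \eqref{first_order_ode} is, through \eqref{def_g_rho}, precisely the statement $p = p^-(r,q)$, i.e.\ that the orbit sits on the unstable manifold $M^-$, and then using the invariance of $M^-$ under the flow of \eqref{ds_f} (Lemma~\ref{lem:unstable} and the PDE \eqref{unstable_eq} guarantee the third component of \eqref{ds} is satisfied along $p=p^-$), which yields \eqref{ODE_s} and hence \eqref{main_ode}. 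Your argument should make this step explicit rather than attribute an equivalence to the corollary; apart from this, the proposal is sound.
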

\begin{proof}
We define $w$ by Proposition~\ref{prop:nonresonant} (non-resonant case) and Proposition~\ref{prop:resonant} (resonant case), respectively. Using that $w$ solves \eqref{problem_w_nonresonant} and \eqref{problem_w_resonant}, respectively, defining $\mu = \mu_b$ through \eqref{identify_nonresonant} and \eqref{identify_resonant}, respectively, we obtain the asymptotics \eqref{psib_contact_precise} and that $\mu_b$ is a solution to problem~\eqref{first_order_mu}. In view of \eqref{def_g_rho}, \eqref{first_order_ode} implies
\begin{align*}
H \partial_H \mu_b &= H^{\frac{3-n}{3}} \rbar{p^-}{q = H^{- \frac{3-n}{3}} \mu_b} && \mbox{for } H > 0,
\end{align*}
so that with $(r,q,p)$ defined as in \eqref{def_r_q_p} and employing \eqref{s_h} we get $p = p^-$. Hence, $(r,q,p)$ lies on the unstable manifold $M^-$ of the stationary point $(r,q,p) = (0,0,0)$ of the dynamical system \eqref{ds_f}. In particular, $\mu$ solves \eqref{ODE_s}, which in view of \eqref{s_h} and defining $\psi$ through \eqref{mu} implies that $\psi$ solves \eqref{main_ode} for $H > 0$ small enough and that \eqref{psib_contact} holds true. The representations \eqref{psib_contact_precise_resonant} as well as \eqref{bc_nonresonant} and \eqref{bc_resonant}, respectively, imply that $\mu_b = 0$ at $H = 0$, which in view of \eqref{psib_contact} shows that \eqref{BC_main_sys_1} is satisfied. Additionally, equations~\eqref{psib_contact_precise_resonant} imply
\[
\partial_b \mu_b = \begin{cases} H + \rbar{\partial_\xi w}{(\xi,\varrho) = \left(bH,H^{3-n}\right)} H & \mbox{for } n \in (0,3) \setminus \left\{\frac 1 m \colon m \in \N\right\}, \\ H + \rbar{\partial_\xi w}{(\xi,\varrho,\sigma) = \left(bH,H^{3-n},H \ln H\right)} H & \mbox{for } n = \frac 1 m \mbox{ with } m \in \N, \end{cases}
\]
which by virtue of \eqref{bc_nonresonant} and \eqref{bc_resonant}, respectively, yields \eqref{asymptotic_mub_contact}.
\end{proof}

We combine this with the following result, which is valid for complete as well as partial wetting:
\begin{proposition}[solution manifold in the bulk, cf.~\cite{GGO}]\label{prop:manifold_bulk}
Suppose $n \in (0,3)$. For all $B>0$ there exists a function $R_B$ of $H > 0$ large enough such that $\psi = \psi_B$ with
\begin{align*}
\psi_B &= \rbar{\psi_\mathrm{CV}}{H \mapsto BH} (1+R_B)&& \mbox{for } H > 0 \mbox{ sufficiently large}
\end{align*}
defines a solution of \eqref{main_ode} and \eqref{BC_main_sys_2}, where $\psi_\mathrm{CV}$ is the unique twice for large $H > 0$ continuously differentiable solution to \eqref{problem_tanner}. Furthermore, it holds
\begin{align*}
R_B &= \order\left(B^{3-n}(\ln(H))^{-1}H^{-(3-n)}\right) && \mbox{as } H \downarrow 0.
\end{align*}
The correction $R_B$ depends, locally in $H$, continuously differentiably on $B > 0$. Additionally, the boundary condition
\begin{align}\label{asymptote_psib_inf}
\partial_H \partial_B \psi_B &= - \tfrac{2}{9B}(\ln(H))^{-\frac{4}{3}}H^{-1}(1+o(1)) && \mbox{as } H\to\infty
\end{align}
holds true. Furthermore, there exists a $B = B_\mathrm{CG}>0$ such that the unique solution $\psi = \psi_\mathrm{CG}$ of \eqref{main_sys} constructed in \cite{CG} or Theorem~\ref{th:ex_un} in Appendix~\ref{app:ex_un} is the same as $\psi_B$.
\end{proposition}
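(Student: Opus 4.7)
The plan is to invoke \cite[Proposition~3.1]{GGO} for the first four claims --- existence of the one-parameter family $\psi_B$, the decay rate of $R_B$, continuously differentiable dependence on $B>0$, and the mixed-derivative asymptote \eqref{asymptote_psib_inf} --- and then to prove separately the existence of $B_\mathrm{CG}$ such that $\psi_\mathrm{CG} = \psi_{B_\mathrm{CG}}$. The cited results apply verbatim because equation~\eqref{main_ode} together with the bulk condition \eqref{BC_main_sys_2} is insensitive to $k$; the microscopic contact angle only enters through \eqref{BC_main_sys_1}, which plays no role in the bulk construction. Concretely, one substitutes $\psi = \psi_\mathrm{CV}(BH)(1+R)$ into \eqref{main_ode}, uses that $\psi_\mathrm{CV}(BH)$ solves the leading-order equation \eqref{diff_to_infty_psi}, and treats the residual driven by the mobility correction $H^{n-1}$ (which is subdominant to $H^2$ by an algebraic factor $H^{n-3}$) by a contraction argument in a weighted Banach space tailored to the rate $(\ln H)^{-1} H^{-(3-n)}$. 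The asymptote \eqref{asymptote_psib_inf} then follows by differentiating $\psi_B = \psi_\mathrm{CV}(BH)(1+R_B)$ in $B$ and in $H$; the leading contribution is $H \, \psi_\mathrm{CV}''(BH) + \psi_\mathrm{CV}'(BH)$, which by twice differentiating \eqref{tanner_scaled_psi_t} yields exactly $-\tfrac{2}{9B}(\ln H)^{-4/3} H^{-1}(1+\landau(1))$, the $R_B$-contribution being of strictly lower order.

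For the novel statement, the existence of $B_\mathrm{CG}$, I would use the monotonicity properties of $\psi_\mathrm{CG}$ and its derivatives established after \eqref{coordinate_transform}. Because $\tfrac{\d^2\psi_\mathrm{CG}}{\d H^2} = -\tfrac{2}{3}(H^2+H^{n-1})^{-1} \psi_\mathrm{CG}^{-1/2} < 0$ while $\tfrac{\d\psi_\mathrm{CG}}{\d H} > 0$ with $\tfrac{\d\psi_\mathrm{CG}}{\d H} \to 0$ as $H \to \infty$ by \eqref{BC_main_sys_2}, an elementary barrier/integration argument yields $\psi_\mathrm{CG}(H) \to \infty$ with leading-order behavior $\psi_\mathrm{CG} \sim (\ln H)^{2/3}$. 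Bootstrapping on this asymptote, exploiting that the perturbation $H^{n-1}$ produces errors of order $H^{n-3}$ (algebraic decay, not logarithmic), one obtains the refined asymptote $\psi_\mathrm{CG}^{3/2} = \ln H - \tfrac{1}{3}\ln\ln H + c + \landau(1)$ as $H \to \infty$ for some $c \in \R$.

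Setting $B_\mathrm{CG} := e^c$ and comparing with \eqref{tanner_scaled_psi_t} applied to $\psi_\mathrm{CV}(BH)$ shows that $\psi_\mathrm{CG}$ and $\psi_{B_\mathrm{CG}}$ share the same refined Cox-Voinov asymptote. Since the one-parameter family $\{\psi_B\}_{B>0}$ exhausts the solutions to \eqref{main_ode} meeting \eqref{BC_main_sys_2} together with such a refined asymptote, and since two solutions of \eqref{main_ode} agreeing to sufficient order as $H \to \infty$ must coincide for all large $H$ by local uniqueness of the Cauchy problem, $\psi_\mathrm{CG} = \psi_{B_\mathrm{CG}}$ follows on the overlap of their domains of definition. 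The main obstacle is the refinement of the Cox-Voinov asymptote beyond leading order --- extracting the $-\tfrac{1}{3}\ln\ln H$ correction and controlling the $\order(1)$ remainder for $\psi_\mathrm{CG}$ --- which requires handling the mobility correction on logarithmic scales; however, this is analogous to arguments in \cite{GGO} in the complete-wetting case and can be carried out with minor modifications.
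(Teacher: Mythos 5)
Your high-level strategy --- invoke \cite[Proposition~3.1]{GGO} because the bulk construction is insensitive to the contact-angle boundary condition \eqref{BC_main_sys_1} --- is exactly what the paper does. However, the paper cites GGO for \emph{all} five assertions, including the existence of $B_\mathrm{CG}$: the proof in \cite[\S 4--5]{GGO} characterizes, for an arbitrary solution of \eqref{main_ode} and \eqref{BC_main_sys_2}, the membership in the one-parameter bulk family $\{\psi_B\}_{B>0}$, so applying it to the partial-wetting solution $\psi_\mathrm{CG}$ is immediate and no separate construction of $B_\mathrm{CG}$ is required. Your extra reconstruction of the $B_\mathrm{CG}$ argument is therefore unnecessary; more importantly, its closing step is flawed. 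You infer $\psi_\mathrm{CG}=\psi_{B_\mathrm{CG}}$ from agreement of refined asymptotics by ``local uniqueness of the Cauchy problem'' as $H\to\infty$, but $H=\infty$ is not a regular point and two solutions of \eqref{main_ode} can agree to every order of a log-algebraic asymptotic expansion while still differing by a term decaying faster than any such order. The actual argument in \cite[\S 4--5]{GGO} rescales so that $H\to\infty$ becomes approach to a hyperbolic fixed point of an autonomous dynamical system; the solutions satisfying \eqref{problem_tanner} then form a one-dimensional invariant (stable) manifold, parametrized by $B$, and $\psi_\mathrm{CG}$ is identified as a point on it by invariant-manifold theory rather than by any Cauchy-type uniqueness at infinity. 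If one wanted to reproduce the $B_\mathrm{CG}$ claim independently, that is the structure that replaces your ``exhaustion plus local uniqueness'' step.
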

\begin{proof}
See \cite[Proposition~3.1]{GGO} for the statement and \cite[\S4-5]{GGO} for its proof.
\end{proof}
%

\subsection{Matching and transversality\label{sec:transversality}}
This part mainly follows the reasoning in \cite[\S3.3]{GGO} with the difference of deriving continuous differentiability in $k > 0$. Our goal is to study the solution manifolds constructed in Propositions~\ref{prop:manifold_contact} and \ref{prop:manifold_bulk} in three-dimensional phase space $\left(H,\psi,\frac{\d\psi}{\d H}\right)$ which intersect in the unique solution curve $\left(H,\psi_\mathrm{CG},\frac{\d\psi_\mathrm{CG}}{\d H}\right)$.

\begin{lemma}\label{lem:linearized_ode}
Take $n\in(0,3)$, $k > 0$, and let $b = b_\mathrm{CG} \in \R$ and $B = B_\mathrm{CG} > 0$ such that $\psi_b= \psi_B = \psi_\mathrm{CG}$. Then $\psi_b$ and $\psi_B$ are for every $H > 0$ continuously differentiable in $b$ around $b = b_\mathrm{CG}$ and in $B$ around $B = B_\mathrm{CG}$, respectively, and $\eta \in \left\{\rbar{\partial_b \psi_b}{b = b_\mathrm{CG}}, \rbar{\partial_B \psi_B}{B = B_\mathrm{CG}}\right\}$ is twice continuously differentiable in $H > 0$ with
\begin{align}\label{ode_eta}
\tfrac{\d^2\eta}{\d H^2} - \tfrac{1}{3}(H^2+H^{n-1})^{-1} \psi_\mathrm{CG}^{-\frac{3}{2}} \eta &= 0 && \mbox{for } H>0.
\end{align}
\end{lemma}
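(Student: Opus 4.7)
The plan is to first establish the claimed smooth parameter dependence on $b$ and $B$ using the explicit representations of Propositions~\ref{prop:manifold_contact} and \ref{prop:manifold_bulk}, and then to obtain \eqref{ode_eta} by differentiating the nonlinear ODE \eqref{main_ode} with respect to the corresponding parameter and evaluating at the matching value.

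For $\psi_b$: Near $H = 0$, the representations \eqref{psib_contact_precise_nonresonant} and \eqref{psib_contact_precise_resonant} give $\psi_b = k^2(1+bH+\rbar{w}{\xi=bH,\ldots})$, where $w$ is analytic in its arguments. Since $\xi = bH$ depends smoothly (indeed analytically) on $b$, we see that $\psi_b$ is analytic in $b$ for $H > 0$ sufficiently small, with $H$-derivatives of all orders continuous in $(H,b)$ for $H > 0$ small. For $H$ away from $0$, since $\psi_\mathrm{CG} > 0$ throughout $(0,\infty)$ by property~\eqref{it:hg0} in \S2.3 and the nonlinearity $\psi^{-1/2}$ in \eqref{main_ode} is smooth on $\{\psi > 0\}$, classical smooth-parameter-dependence theory for ODEs (e.g.\ \cite[\S2.3]{Teschl2012}) lets us extend $\psi_b$ to all $H > 0$ while preserving continuous differentiability in $b$ on any compact subinterval of $(0,\infty)$, by viewing $\psi_b$ as the solution of the initial-value problem for \eqref{main_ode} with initial data at some small $H_0 > 0$ given by the locally constructed analytic object.

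For $\psi_B$: Proposition~\ref{prop:manifold_bulk} directly asserts continuous differentiability of $R_B$ in $B$ locally in $H$ (for $H$ large). Again, for the chosen $B = B_\mathrm{CG}$ the solution $\psi_B = \psi_\mathrm{CG}$ is positive on all of $(0,\infty)$, so standard smooth dependence for \eqref{main_ode} extends continuous differentiability in $B$ from a large value of $H$ down to every $H > 0$. In both cases $\eta := \rbar{\partial_b \psi_b}{b=b_\mathrm{CG}}$ or $\rbar{\partial_B \psi_B}{B=B_\mathrm{CG}}$ is twice continuously differentiable in $H > 0$ since $\psi_b$, $\psi_B$ solve the second-order ODE \eqref{main_ode} with $H$-smooth right-hand side.

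To obtain \eqref{ode_eta}, differentiate
\[
\tfrac{\d^2\psi_b}{\d H^2} + \tfrac{2}{3}(H^2+H^{n-1})^{-1}\psi_b^{-\frac 1 2} = 0
\]
with respect to $b$ and interchange $\partial_b$ with $\tfrac{\d}{\d H}$ (justified by the joint regularity of $\psi_b$ in $(H,b)$ established above) to find
\[
\tfrac{\d^2}{\d H^2}(\partial_b \psi_b) - \tfrac{1}{3}(H^2+H^{n-1})^{-1} \psi_b^{-\frac 3 2} \partial_b \psi_b = 0.
\]
Evaluating at $b = b_\mathrm{CG}$ uses $\psi_{b_\mathrm{CG}} = \psi_\mathrm{CG}$ and yields exactly \eqref{ode_eta}. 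The identical computation applied to $\psi_B$ and evaluated at $B = B_\mathrm{CG}$ produces the same linear homogeneous ODE for $\rbar{\partial_B \psi_B}{B=B_\mathrm{CG}}$. The only mildly delicate point in this argument is the global-in-$H$ extension: both $\psi_b$ and $\psi_B$ are constructed in Propositions~\ref{prop:manifold_contact} and \ref{prop:manifold_bulk} only in one-sided neighborhoods of $H = 0$ and $H = \infty$ respectively, so smooth parameter dependence on intermediate $H$-intervals must be read off from the standard ODE theory applied along the reference trajectory $\psi_\mathrm{CG}$, which stays strictly positive and bounded on compact subintervals of $(0,\infty)$.
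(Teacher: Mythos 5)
Your proof is correct and follows essentially the same approach as the paper: use the explicit representations of Propositions~\ref{prop:manifold_contact} and \ref{prop:manifold_bulk} for parameter regularity near the respective boundary, invoke standard smooth-parameter-dependence theory for the ODE \eqref{main_ode} on compact subintervals of $(0,\infty)$ where $\psi_\mathrm{CG}>0$ to extend globally, and then differentiate \eqref{main_ode} in the parameter and evaluate at the matching value to obtain \eqref{ode_eta}. Your write-up spells out the global-in-$H$ extension step a bit more explicitly than the paper, but the argument is the same.
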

\begin{proof}
Because of \eqref{psib_contact} and \eqref{psib_contact_precise} of Proposition~\ref{prop:manifold_contact}, the fact that $\psi_{b_\mathrm{CG}} = \psi_\mathrm{CG}$ is a global solution (i.e., a solution of \eqref{main_ode} for all $H > 0$), and continuously differentiable dependence on the data for $H$ taken from any compact subset of $(0,\infty)$ using standard ODE theory, it follows that $\eta = \rbar{\partial_b \psi_b}{b = b_\mathrm{CG}}$ is twice continuously differentiable in $H > 0$ and by differentiating \eqref{main_ode} meets the ordinary differential equation \eqref{ode_eta}. Likewise, using Proposition~\ref{prop:manifold_bulk}, the fact that $\psi_{B_\mathrm{CG}} = \psi_\mathrm{CG}$ is a global solution to \eqref{main_ode}, and standard ODE theory to obtain continuous differentiability on the parameter $B > 0$ for all $H > 0$, taking $\eta = \rbar{\partial_B \psi_B}{B = B_\mathrm{CG}}$, we recognize that $\eta$ is twice continuously differentiable and by differentiating \eqref{main_ode} that \eqref{ode_eta} is satisfied, too.
\end{proof}

We use the following uniqueness result for solutions to \eqref{ode_eta}.
\begin{lemma}[uniqueness of the linearized problem, cf.~\cite{GGO}]\label{lem:unique_linear}
Suppose that $n \in (0,3)$, $k > 0$, and that $\eta$ is twice continuously differentiable in $H > 0$ and right-continous at $H = 0$ such that \eqref{ode_eta},
\begin{subequations}\label{eta_0_inf}
\begin{align}\label{eta_0}
\eta &= 0 && \mbox{at } H=0,
\end{align}
and
\begin{align}\label{eq2_lemma5.3}
\tfrac{\d\eta}{\d H} &\to 0 && \mbox{as } H\to\infty
\end{align}
\end{subequations}
are satisfied. Then $\eta = 0$ for all $H \ge 0$.
\end{lemma}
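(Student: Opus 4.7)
The plan is to argue by contradiction, exploiting the strict positivity of the coefficient in \eqref{ode_eta}. Writing the ODE as $\frac{\d^2\eta}{\d H^2} = c(H)\,\eta$ with $c(H) := \tfrac{1}{3}(H^2+H^{n-1})^{-1}\psi_\mathrm{CG}^{-\frac{3}{2}}$, I note that $c(H) > 0$ on $(0,\infty)$, since $\psi_\mathrm{CG} > 0$ there. Hence $\eta$ is strictly convex on every open interval where it is positive and strictly concave on every open interval where it is negative, so no oscillation is possible; this sign rigidity is what I will leverage.

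Assuming for contradiction that $\eta \not\equiv 0$, I would first replace $\eta$ by $-\eta$ if needed (the ODE and the boundary conditions \eqref{eta_0_inf} are linear homogeneous in $\eta$) to reduce to the case that $\eta(H_1) > 0$ for some $H_1 > 0$. I would then take the maximal open subinterval $(a,b) \subset [0,\infty)$ containing $H_1$ on which $\eta > 0$, so that $0 \le a < H_1 < b \le \infty$. By continuity of $\eta$ on $(0,\infty)$, together with right-continuity at $H = 0$ and \eqref{eta_0} in case $a = 0$, one has $\eta(a^+) = 0$, and likewise $\eta(b^-) = 0$ whenever $b < \infty$.

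The argument then splits according to whether $b$ is finite. In the case $b < \infty$, strict convexity on $(a,b)$ with vanishing values at both endpoints immediately forces $\eta < 0$ on $(a,b)$, contradicting $\eta > 0$. In the case $b = \infty$, strict convexity on $(a,\infty)$ means $\frac{\d\eta}{\d H}$ is strictly increasing there; combined with $\frac{\d\eta}{\d H} \to 0$ from \eqref{eq2_lemma5.3}, this forces $\frac{\d\eta}{\d H} < 0$ on $(a,\infty)$, so $\eta$ is strictly decreasing. But then $\eta(H) < \eta(a^+) = 0$ for every $H > a$, again a contradiction, so we conclude $\eta \equiv 0$.

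I do not expect any substantial obstacle: once the sign of the coefficient is recorded, convexity does all the work, and the two boundary conditions are each used exactly once---\eqref{eta_0} to anchor the value $\eta(a^+) = 0$ in the unbounded case, and \eqref{eq2_lemma5.3} to fix the sign of $\frac{\d\eta}{\d H}$ at infinity. The only slightly delicate point is handling $a = 0$ in the second case, where the right-continuity hypothesis together with $\eta(0) = 0$ is precisely what lets me take the limit $\eta(a^+) = 0$; no energy estimate, asymptotic matching to the solution manifolds of \S\ref{sec:solution_manifolds}, or refined decay information about $\psi_\mathrm{CG}$ is needed.
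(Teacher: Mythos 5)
Your proof is correct, but it takes a genuinely different route from the one the paper invokes. The paper's proof (referenced to \cite[Lemma~3.3]{GGO} and using the same mechanism as the uniqueness step in Theorem~\ref{th:ex_un}) is an energy-type argument that works with $\eta^2$ rather than with $\eta$ directly: from \eqref{ode_eta} one computes $\tfrac{\d^2}{\d H^2}(\eta^2) = 2\bigl(\tfrac{\d\eta}{\d H}\bigr)^2 + \tfrac{2}{3}(H^2+H^{n-1})^{-1}\psi_\mathrm{CG}^{-3/2}\eta^2 \ge 0$, so $\eta^2$ is convex; since $\eta^2$ is nonnegative and vanishes at $H=0$, one infers $\tfrac{\d}{\d H}(\eta^2)\ge 0$ everywhere; then $\tfrac{\d}{\d H}\bigl(\tfrac{\d\eta}{\d H}\bigr)^2 = \tfrac{1}{3}(H^2+H^{n-1})^{-1}\psi_\mathrm{CG}^{-3/2}\,\tfrac{\d}{\d H}(\eta^2)\ge 0$, so $\bigl(\tfrac{\d\eta}{\d H}\bigr)^2$ is nondecreasing, nonnegative, and tends to zero by \eqref{eq2_lemma5.3}, hence identically zero, and finally $\eta(0)=0$ yields $\eta\equiv 0$. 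Your argument is instead a sign-rigidity (Sturm/maximum-principle) argument on $\eta$ itself, using that the strictly positive coefficient forces strict convexity on intervals where $\eta>0$ and strict concavity where $\eta<0$, which is incompatible with the boundary data on any maximal interval of positivity. Both routes are elementary and correct: the convexity-of-$\eta^2$ version avoids the sign normalization, the passage to a maximal interval of positivity, and the case split on whether $b$ is finite, and parallels verbatim the nonlinear uniqueness proof in Appendix~\ref{app:ex_un}, whereas yours is arguably more geometrically transparent about how each of the two boundary conditions is used. Only a presentational nit: in both of your cases what the limit arguments actually deliver is $\eta\le 0$ on the interval of positivity (for $b<\infty$, convexity with vanishing endpoint data gives $\eta\le 0$ in the interior after passing to the limit; for $b=\infty$, letting $H_1\downarrow a$ in $\eta(H)<\eta(H_1)$ gives $\eta(H)\le 0$), but since $\eta>0$ strictly there, the contradiction stands in either form.
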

\begin{proof}
The proof uses the convexity of $\eta^2$ (which easily follows from \eqref{ode_eta}) and is contained in \cite[Lemma~3.3]{GGO}.
\end{proof}

The following corollary implies that the solution manifolds $\left(H,\psi_b,\frac{\d\psi_b}{\d H}\right)$ and $\left(H,\psi_B,\frac{\d\psi_B}{\d H}\right)$ (parametrized by $(b,H)$ and $(B,H)$, and constructed in Propositions~\ref{prop:manifold_contact} and \ref{prop:manifold_bulk}, respectively) intersect transversally in the solution curve $\left(H,\psi_\mathrm{CG},\frac{\d\psi_\mathrm{CG}}{\d H}\right)$ constructed in \cite{CG}.

\begin{corollary}\label{cor:linear_independence}
Suppose $n\in(0,3)$, $k > 0$, and choose $b = b_\mathrm{CG} \in \R$ and $B = B_\mathrm{CG} > 0$ such that $\psi_b= \psi_B = \psi_\mathrm{CG}$. Then the vectors
\[
\left(\rbar{\partial_b \psi_b}{b = b_\mathrm{CG}},\rbar{\partial_H \partial_b \psi_b}{b = b_\mathrm{CG}}\right) \quad \text{and} \quad \left(\rbar{\partial_B \psi_B}{B = B_\mathrm{CG}},\rbar{\partial_H \partial_B \psi_B}{B = B_\mathrm{CG}}\right)
\]
are linearly independent for all $H>0$.
\end{corollary}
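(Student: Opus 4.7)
The strategy is to argue by contradiction, combining the uniqueness Lemma~\ref{lem:unique_linear} with the explicit asymptotic behaviors recorded in Propositions~\ref{prop:manifold_contact} and \ref{prop:manifold_bulk}. Write $\eta_1 := \rbar{\partial_b \psi_b}{b = b_\mathrm{CG}}$ and $\eta_2 := \rbar{\partial_B \psi_B}{B = B_\mathrm{CG}}$. By Lemma~\ref{lem:linearized_ode} both functions are twice continuously differentiable on $(0,\infty)$ and satisfy the linear second-order ODE \eqref{ode_eta}. Suppose that for some $H^* > 0$ the vectors $(\eta_1(H^*),\eta_1'(H^*))$ and $(\eta_2(H^*),\eta_2'(H^*))$ are linearly dependent. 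Then there exists $(\alpha,\beta) \in \R^2 \setminus \{(0,0)\}$ with $\alpha \eta_1(H^*)+\beta \eta_2(H^*) = 0$ and $\alpha \eta_1'(H^*)+\beta \eta_2'(H^*) = 0$. Setting $\eta := \alpha \eta_1+\beta \eta_2$, we again obtain a solution of \eqref{ode_eta} with $\eta(H^*) = \eta'(H^*) = 0$, so by the standard uniqueness theorem for second-order linear ODEs we conclude $\eta \equiv 0$ on $(0,\infty)$.

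The plan is now to rule out both possible cases. If $\beta = 0$, then $\alpha \neq 0$ forces $\eta_1 \equiv 0$, which contradicts the asymptotic $\eta_1 = k^2 \rbar{\partial_b \mu_b}{b = b_\mathrm{CG}} = k^2 H(1+o(1))$ as $H \downarrow 0$ coming from \eqref{psib_contact} together with \eqref{asymptotic_mub_contact} of Proposition~\ref{prop:manifold_contact}. If instead $\beta \neq 0$, then $\eta_2 = -(\alpha/\beta)\,\eta_1$. Since $\psi_b(0) = k^2$ is independent of $b$ by \eqref{BC_main_sys_1}, one has $\eta_1(0) = 0$ and hence $\eta_2(0) = 0$. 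Moreover, by \eqref{asymptote_psib_inf} of Proposition~\ref{prop:manifold_bulk},
\[
\eta_2'(H) = -\tfrac{2}{9 B_\mathrm{CG}}(\ln H)^{-\frac{4}{3}} H^{-1}(1 + o(1)) \to 0 \quad \text{as } H \to \infty.
\]
Thus $\eta_2$ satisfies all the hypotheses of Lemma~\ref{lem:unique_linear}, which forces $\eta_2 \equiv 0$. This again contradicts the non-vanishing of $\eta_2'$ for large $H$ provided by \eqref{asymptote_psib_inf}. Since both cases lead to contradictions, the two vectors must be linearly independent at every $H > 0$.

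The only non-routine point concerns the domain of definition of $\eta_2$: Proposition~\ref{prop:manifold_bulk} constructs $\psi_B$ directly only for $H$ large, whereas the argument above uses $\eta_2$ on all of $[0,\infty)$. This is addressed by observing that $\psi_{B_\mathrm{CG}} = \psi_\mathrm{CG}$ extends to a classical solution on $[0,\infty)$, so that for $B$ close to $B_\mathrm{CG}$ the solution $\psi_B$ can be continued to $[0,\infty)$ by solving \eqref{main_ode} backward in $H$ starting from some large $H_0$; standard $C^1$-dependence on parameters then gives that $\partial_B \psi_B|_{B_\mathrm{CG}}$ is a well-defined $C^2$-function on $[0,\infty)$. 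Once this extension is in place, the rest of the argument is a clean application of Lemma~\ref{lem:unique_linear} together with the sharp leading-order asymptotics already collected in the two propositions, so no genuinely new estimate is needed.
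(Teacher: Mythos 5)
Your proof is correct and follows essentially the same strategy as the paper: reduce linear dependence of the Wronskian vectors at one point to linear dependence of the functions $\rbar{\partial_b \psi_b}{b = b_\mathrm{CG}}$ and $\rbar{\partial_B \psi_B}{B = B_\mathrm{CG}}$ on all of $(0,\infty)$ via ODE uniqueness, then rule this out using the vanishing at $H = 0$ from \eqref{asymptotic_mub_contact}, the decay of $\partial_H\partial_B\psi_B$ at $H \to \infty$ from \eqref{asymptote_psib_inf}, and Lemma~\ref{lem:unique_linear}. Your case split on $\beta$ is a mild reorganization of the paper's argument that first forces $\alpha_0 = 0$ and then $\alpha_\infty = 0$, and your remark about extending $\psi_B$ to small $H$ is exactly the point the paper addresses in Lemma~\ref{lem:linearized_ode}.
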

\begin{proof}
Because of Propositions~\ref{prop:manifold_contact} and \ref{prop:manifold_bulk}, $b = b_\mathrm{CG} \in \R$ and $B = B_\mathrm{CG} > 0$ such that $\psi_b = \psi_B = \psi_\mathrm{CG}$ exist. By Lemma~\ref{lem:linearized_ode}, $\eta \in \left\{\rbar{\partial_b \psi_b}{b = b_\mathrm{CG}},\rbar{\partial_B \psi_B}{B = B_\mathrm{CG}}\right\}$ is a solution to \eqref{ode_eta} for which by standard theory of ODEs existence and uniqueness of classical solutions for given data $\left(\eta,\frac{\d\eta}{\d H}\right)$ at one $H > 0$ holds true. This implies that $\left(\rbar{\partial_b \psi_b}{b = b_\mathrm{CG}},\rbar{\partial_H \partial_b \psi_b}{b = b_\mathrm{CG}}\right)$ and $\left(\rbar{\partial_B \psi_B}{B = B_\mathrm{CG}},\rbar{\partial_H \partial_B \psi_B}{B = B_\mathrm{CG}}\right)$ are linearly independent for all $H>0$ if they are linearly independent for one $H>0$, which in turn is equivalent to $\rbar{\partial_b \psi_b}{b = b_\mathrm{CG}}$ and $\rbar{\partial_B \psi_B}{B = B_\mathrm{CG}}$ being linearly independent as functions of $H > 0$. The latter will now be proved in the following way: Suppose that
\begin{align}\label{linear_independence}
    \alpha_0 \rbar{\partial_b\psi_b}{b = b_\mathrm{CG}}+\alpha_\infty \rbar{\partial_B\psi_B}{B = B_\mathrm{CG}} = 0 && \mbox{for all } H > 0,
\end{align}
where $\alpha_0,\alpha_\infty \in \R$ are constants. From \eqref{asymptotic_mub_contact} of Proposition~\ref{prop:manifold_contact} we see that $\rbar{\partial_b\psi_b}{b = b_\mathrm{CG}} = 0$ at $H = 0$, $\rbar{\partial_b\psi_b}{b = b_\mathrm{CG}}$ is non-trivial, and from Lemma~\ref{lem:linearized_ode} that $\rbar{\partial_b\psi_b}{b = b_\mathrm{CG}}$ is a solution to the linear ODE \eqref{ode_eta}. By Lemma~\ref{lem:unique_linear} it follows that $\rbar{\partial_H \partial_b\psi_b}{b = b_\mathrm{CG}} \to 0$ as $H \to \infty$ cannot hold. On the other hand, \eqref{asymptote_psib_inf} of Proposition~\ref{prop:manifold_contact} implies $\rbar{\partial_H \partial_B\psi_B}{B = B_\mathrm{CG}} \to 0$ as $H \to \infty$, so that \eqref{linear_independence} yields $\alpha_0 = 0$. Since \eqref{asymptote_psib_inf} of Proposition~\ref{prop:manifold_bulk} also implies that $\rbar{\partial_B\psi_B}{B = B_\mathrm{CG}}$ is nontrivial, we must have $\alpha_\infty = 0$.
\end{proof}

We are now in position to prove our main result.

\begin{proof}[Proof of Theorem~\ref{main_thm}]
By Propositions~\ref{prop:manifold_contact} and \ref{prop:manifold_bulk}, there exist unique $b = b_\mathrm{CG} \in \R$ and $B = B_\mathrm{CG} > 0$ such that $\psi_b = \psi_B= \psi_\mathrm{CG}$. Writing $R_\infty := R_{B_\mathrm{CG}}$ and $v := b \zeta + \rbar{w}{\xi = b \zeta}$, this implies all statements of Theorem~\ref{main_thm} except for the continuously differentiable dependence of $B$ and $R_\infty$ on $k > 0$. In order to prove the latter, define $f := \left(\psi_b-\psi_B, \partial_H \psi_b - \partial_H \psi_B\right)$. Then it holds $f = 0$ for all $H > 0$ if $b = b_\mathrm{CG}$ and $B = B_\mathrm{CG}$. Hence, in particular $f = \partial_H f = 0$ for all $H > 0$ if $b = b_\mathrm{CG}$ and $B = B_\mathrm{CG}$. Corollary~\ref{cor:linear_independence} implies
\begin{align}\label{det_implicit}
    \det \begin{pmatrix} \partial_b f & \partial_B f \\ \partial_b \partial_H f & \partial_B \partial_H f \end{pmatrix} = \det\begin{pmatrix}
    \partial_b \psi_b & -\partial_B \psi_B \\ \partial_b\partial_H \psi_b & -\partial_B \partial_H \psi_B
    \end{pmatrix} \ne 0 && \mbox{for all } H > 0
\end{align}
if $b = b_\mathrm{CG}$ and $B = B_\mathrm{CG}$. Fix a $H > 0$, then $f$ and $\partial_H f$ are functions of $b \in \R$, $B > 0$, and $k > 0$ only and by Propositions~\ref{prop:manifold_contact} and \ref{prop:manifold_bulk} and standard theory of ODEs in the bulk, are smooth in $b \in \R$, continuously differentiable in $B > 0$, and smooth in $k > 0$. Because of \eqref{det_implicit} we infer with help of the implicit-function theorem that $B_\mathrm{CG}$ and $b_\mathrm{CG}$ are continuously differentiable functions of $k > 0$. Since $R_B$ is a continuously differentiable function of $B > 0$, by the chain rule $R_\infty = R_{B_\mathrm{CG}}$ is a continuously differentiable function of $k > 0$.
\end{proof}
%

\appendix

\section{Existence and uniqueness of traveling waves\label{app:ex_un}}
In this appendix, we adapt the existence and uniqueness proof of classical solutions to \eqref{scaled_partial_wetting} in \cite[Theorem~1.1, \S3]{CG} carried out for quadratic mobilities $n = 2$ to prove existence and uniqueness of classical solutions to \eqref{main_sys} for all $n \in (0,3)$. Though there are no significantly new insights, we present the proof for the sake of providing a complete presentation and since in our chosen set of coordinates the proof turns out to be simpler. The proof of uniqueness follows the reasoning of \cite[Lemma~3.3]{GGO}, which is Lemma~\ref{lem:unique_linear} in this note.

\begin{theorem}[cf.~\cite{CG} for $n = 2$]\label{th:ex_un}
Suppose $n \in (0,3)$ and $k > 0$. Then there exists a unique classical solution $\psi = \psi_\mathrm{CG}$ to \eqref{main_sys}, that is, $\psi > 0$ for $H > 0$, and $\psi$ is twice continuously differentiable in $H > 0$ and right-continuous at $H = 0$.
\end{theorem}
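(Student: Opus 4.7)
I would prove uniqueness first, via a convexity argument in the spirit of Lemma~\ref{lem:unique_linear}, and then existence by a shooting argument in the parameter $b\in\R$ along the family $\psi_b$ of local solutions supplied by Proposition~\ref{prop:manifold_contact}.

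\textbf{Uniqueness.} Let $\psi_1,\psi_2$ be two classical solutions and set $\eta:=\psi_1-\psi_2$. Subtracting the two instances of \eqref{main_ode} and using
\[
\psi_1^{-1/2}-\psi_2^{-1/2} \;=\; -\frac{\eta}{\psi_1^{1/2}\psi_2^{1/2}\bigl(\psi_1^{1/2}+\psi_2^{1/2}\bigr)},
\]
one obtains $\eta''(H)=c(H)\,\eta(H)$ with $c(H)>0$ on $(0,\infty)$. Consequently $(\eta^2)''=2c\,\eta^2+2(\eta')^2\ge 0$, so $\eta^2$ is convex on $(0,\infty)$, vanishes at $H=0$ by \eqref{BC_main_sys_1}, and is non-negative. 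The argument of Lemma~\ref{lem:unique_linear} then applies verbatim: convexity, the boundary condition \eqref{BC_main_sys_2}, and the decay $(\eta^2)'(H)\to 0$ at infinity, following from \eqref{BC_main_sys_2} together with the a priori Cox--Voinov growth $\psi_i = O((\log H)^{2/3})$, force $\eta\equiv 0$.

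\textbf{Existence.} For each $b\in\R$, Proposition~\ref{prop:manifold_contact} yields a local classical solution $\psi_b = k^2(1+\mu_b)$ to \eqref{main_ode} on some interval $[0,H_\ast(b))$, right-continuous at $H=0$ with value $k^2$. Standard ODE theory extends $\psi_b$ maximally to $[0,H_{\max}(b))$ with either $H_{\max}(b)=\infty$ or $\psi_b\to 0$ at $H_{\max}(b)$. Since $\psi_b''<0$ on this interval, $\psi_b$ is strictly concave, $\psi_b'$ is strictly decreasing, and the limit $\beta(b):=\lim_{H\uparrow H_{\max}(b)}\psi_b'(H)\in[-\infty,\infty)$ exists; moreover $\beta(b)<0$ forces $H_{\max}(b)<\infty$ (concavity makes $\psi_b$ hit $0$ in finite time). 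I would then establish:
\begin{enumerate}
\item[(a)] $\beta(b)>0$ for $b$ sufficiently large;
\item[(b)] $H_{\max}(b)<\infty$ for $b$ sufficiently negative;
\item[(c)] the set $\{b:H_{\max}(b)=\infty\}$ is closed and $\beta$ is continuous on it.
\end{enumerate}
Given (a)--(c), the threshold $b_\mathrm{CG}:=\inf\{b:H_{\max}(b)=\infty\}$ is finite with $\beta(b_\mathrm{CG})=0$, so $\psi_\mathrm{CG}:=\psi_{b_\mathrm{CG}}$ is the required solution to \eqref{main_sys}. Positivity is automatic: $\psi_\mathrm{CG}'$ is strictly decreasing to $0$, hence $\psi_\mathrm{CG}'>0$ on $(0,\infty)$ and $\psi_\mathrm{CG}\ge k^2>0$.

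\textbf{Main obstacle.} The technical core is (c) together with the monotonicity $b_1 > b_2 \Rightarrow \psi_{b_1} > \psi_{b_2}$ on their common interval of existence. I would obtain monotonicity by applying the square-convexity trick to $\zeta := \psi_{b_1} - \psi_{b_2}$, which satisfies $\zeta'' = \tilde c(H)\zeta$ with $\tilde c>0$, pinning down the positive sign from the leading asymptotic $\zeta \sim (b_1 - b_2)\, k^2 H$ as $H\downarrow 0$ provided by \eqref{asymptotic_mub_contact}. Continuity of $\beta$ combines continuous dependence of ODE solutions on parameters with the smooth dependence on $b$ of $\psi_b$ furnished by Proposition~\ref{prop:manifold_contact}. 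Claim (a) requires a comparison argument: for $b$ sufficiently large, $\psi_b$ grows rapidly along the unstable manifold so that the negative source term in \eqref{main_ode} contributes only a bounded correction to $\psi_b'$ over $(0,\infty)$, leaving $\beta(b)>0$; claim (b) is symmetric.
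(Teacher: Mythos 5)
Your uniqueness argument is essentially the paper's: write $\eta=\psi_1-\psi_2$, use the algebraic identity for $\psi_1^{-1/2}-\psi_2^{-1/2}$ to get $\eta''=c(H)\eta$ with $c>0$, and run a convexity argument on $\eta^2$. One remark: the detour via an a~priori Cox--Voinov growth bound on $\psi_i$ is unnecessary and, as stated, not yet available. The paper avoids it by observing that $\tfrac{\d}{\d H}\bigl(\tfrac{\d\eta}{\d H}\bigr)^2 = c(H)\,\tfrac{\d}{\d H}\eta^2 \ge 0$, so $\bigl(\tfrac{\d\eta}{\d H}\bigr)^2$ is nondecreasing, nonnegative, and tends to $0$ at infinity by \eqref{BC_main_sys_2}, forcing $\tfrac{\d\eta}{\d H}\equiv 0$ without any growth estimate on $\psi_i$.

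Your existence proof takes a genuinely different route (shooting in $b$ along the family of Proposition~\ref{prop:manifold_contact}) from the paper's, which instead builds approximating two-point boundary-value problems on $[\eps,\eps^{-1}]$, solves them by Schauder's fixed-point theorem, and passes to the limit $\eps\downarrow 0$ via Arzel\`a--Ascoli. The shooting idea is plausible, but there is a concrete gap in your sketch of claim~(a). You argue that for $b$ large the negative source term in \eqref{main_ode} ``contributes only a bounded correction to $\psi_b'$ over $(0,\infty)$.'' That is false for $n\ge 2$: using $\psi\ge k^2$ the total contribution is bounded by $\tfrac{2}{3k}\int_0^\infty (H^2+H^{n-1})^{-1}\,\d H$, and $\int_0^1 H^{1-n}\,\d H$ diverges for $n\ge 2$. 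Indeed $\tfrac{\d\psi_b}{\d H}$ is singular at $H=0$ in this range (cf.\ Lemma~\ref{lem:conv_cg}), so the ``bounded correction'' heuristic does not pin down the sign of $\beta(b)$. Compounding this, the local family $\psi_b$ in Proposition~\ref{prop:manifold_contact} is only furnished on a $b$-dependent interval $[0,H_*(b))$, which shrinks as $b\to\infty$ because the representation requires $\xi=bH$ small; so you cannot directly compare the large-$b$ slope at a fixed $H=\delta$ to the subsequent loss. Establishing (a) thus requires a more careful two-scale estimate (first extend from $H=\delta_0/b$ to a fixed $\delta$ using $\psi\ge k^2$, then compare the $O(b)$ gain from the leading term against the $O(b^{n-2})$ loss). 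Claims (b) and (c) need similar care: continuity of $\beta$ is a limit at infinity, so you need uniform-in-$b$ control, and closedness of $\{b:H_{\max}(b)=\infty\}$ does not follow from continuous dependence alone. Finally, note that invoking Proposition~\ref{prop:manifold_contact} makes the Appendix depend on the dynamical-systems machinery of \S\ref{sec:contact}, whereas the paper's Schauder/Arzel\`a--Ascoli argument is elementary and deliberately self-contained, so that \S\ref{sec:contact} can in turn cite Theorem~\ref{th:ex_un} without circularity.
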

\begin{proof}
We first prove uniqueness. Suppose that $\psi_1$ and $\psi_2$ are two classical solutions to \eqref{main_sys}. We set $\phi := \psi_1 - \psi_2$ and have
\begin{align*}
\tfrac{\d^2}{\d H^2} \phi^2 &= 2 \left(\tfrac{\d \phi}{\d H}\right)^2 + 2 \phi \tfrac{\d^2 \phi}{\d H^2} && \mbox{for } H > 0.
\end{align*}
With help of \eqref{main_ode} it follows
\begin{align*}
\phi \tfrac{\d^2 \phi}{\d H^2} &= - \tfrac 2 3 \phi \left(H^2 + H^{n-1}\right)^{-1} \left(\psi_1^{-\frac 1 2} - \psi_2^{- \frac 1 2}\right) \\
&= \tfrac 2 3 \left(H^2 + H^{n-1}\right)^{-1} \psi_1^{-\frac 1 2} \psi_2^{-\frac 1 2} \left(\psi_1^{\frac 1 2} + \psi_2^{\frac 1 2}\right)^{-1} \phi^2 \ge 0 && \mbox{for } H > 0.
\end{align*}
Hence, $\tfrac{\d^2}{\d H^2} \phi^2 \ge 0$ for $H > 0$. Since $\phi = 0$ at $H = 0$ by \eqref{BC_main_sys_1} and $\phi^2 \ge 0$, necessarily $\frac{\d}{\d H} \phi^2 \ge 0$ for $H > 0$ small enough. Because of $\tfrac{\d^2}{\d H^2} \phi^2 \ge 0$ for $H > 0$ we need to have $\frac{\d}{\d H} \phi^2 \ge 0$ for all $H > 0$. This implies with help of \eqref{main_ode}
\begin{align*}
\tfrac{\d}{\d H} \left(\tfrac{\d\phi}{\d H}\right)^2 &= 2 \tfrac{\d\phi}{\d H} \tfrac{\d^2\phi}{\d H^2} = - \tfrac 4 3 \tfrac{\d\phi}{\d H} \left(H^2 + H^{n-1}\right)^{-1} \left(\psi_1^{-\frac 1 2} - \psi_2^{- \frac 1 2}\right) \\
&= \tfrac 2 3 \left(H^2 + H^{n-1}\right)^{-1} \psi_1^{-\frac 1 2} \psi_2^{-\frac 1 2} \left(\psi_1^{\frac 1 2} + \psi_2^{\frac 1 2}\right)^{-1} \tfrac{\d}{\d H} \phi^2 \ge 0 && \mbox{for } H > 0.
\end{align*}
Since $\left(\tfrac{\d\phi}{\d H}\right)^2 \ge 0$ and $\left(\tfrac{\d\phi}{\d H}\right)^2 \to 0$ as $H \to \infty$ by \eqref{BC_main_sys_2}, we obtain $\tfrac{\d\phi}{\d H} = 0$ for all $H > 0$, which together with $\phi = 0$ at $H = 0$ by \eqref{BC_main_sys_1} implies $\phi = \psi_1 - \psi_2 = 0$ for all $H \ge 0$.

\medskip

In order to prove existence, first consider the approximating problems
\begin{subequations}\label{approx_psi}
\begin{align}
\tfrac{\d^2\psi}{\d H^2} + \tfrac 2 3 \left(H^2+H^{n-1}\right)^{-1} \psi^{-\frac 1 2} &= 0 && \mbox{for } \eps < H < \eps^{-1}, \label{approx_psi_ode} \\
\psi &= k^2 && \mbox{at } H = \eps, \label{approx_psi_bc1} \\
\tfrac{\d \psi}{\d H} &= 0 && \mbox{at } H = \eps^{-1},\label{approx_psi_bc2}
\end{align}
\end{subequations}
where $1 > \eps > 0$. Integrating \eqref{approx_psi_ode} twice using the boundary conditions \eqref{approx_psi_bc1} and \eqref{approx_psi_bc2}, we obtain the equivalent fixed-point problem
\begin{align}\label{fixed_psi_eps}
\psi &= \cS[\psi] := k^2 + \tfrac 2 3 \int_\eps^H \int_{H_1}^{\eps^{-1}} \left(H_2^2 + H_2^{n-1}\right)^{-1} \left(\rbar{\psi}{H = H_2}\right)^{-\frac 1 2} \d H_2 \, \d H_1 && \mbox{for } \eps \le H \le \eps^{-1}.
\end{align}
Suppose that $\psi$ is continuous for $\eps \le H \le \eps^{-1}$ with $\psi \ge k^2$. Then we obtain with help of \eqref{fixed_psi_eps} that 
\begin{align}
0 \le \tfrac{\d}{\d H} \cS[\psi] &\le \tfrac{2}{3k} \left(\chi \int_{H}^1 \tilde H^{1-n} \, \d \tilde H + \int_1^{\eps^{-1}} \tilde H^{-2} \, \d \tilde H\right) \nonumber \\
&\le \begin{cases} \tfrac{2}{3k} \left(\chi \tfrac{1-H^{2-n}}{2-n} + 1\right) & \text{if } n \in (0,3) \setminus \{2\} \\ \tfrac{2}{3k} \left(- \chi \ln H +1\right) & \mbox{if } n = 2 \end{cases} \nonumber \\
&\le \begin{cases} \tfrac{2}{3k} \tfrac{3-n}{2-n} & \text{if } 0 < n < 2 \\ \tfrac{2}{3k} \left(-\ln \eps +1\right) & \mbox{if } n = 2 \\
\tfrac{2}{3k} \tfrac{\eps^{2-n}}{n-2} & \text{if } 2 < n < 3 \end{cases} && \mbox{for } \eps \le H \le \eps^{-1}, \label{equi}
\end{align}
where $\chi = 1$ if $0 \le H \le 1$ and $\chi = 0$ else, and
\begin{align}
k^2 \le \cS[\psi] &\le \begin{cases} k^2 + \tfrac{2}{3k} \int_\eps^H \left(\rbar{\chi}{H = \tilde H} \tfrac{1-\tilde H^{2-n}}{2-n} + 1\right) \d \tilde H& \text{if } n \in (0,3) \setminus \{2\} \\ k^2 + \tfrac{2}{3k} \int_\eps^H \left(- \rbar{\chi}{H = \tilde H} \ln \tilde H +1\right) \d \tilde H & \mbox{if } n = 2 \end{cases} \nonumber \\
&\le \begin{cases} k^2 + \tfrac{2}{3 k} \left(\tfrac{\vartheta}{2-n} - \tfrac{\vartheta^{3-n}}{(3-n) (2-n)} + H\right) & \text{if } n \in (0,3) \setminus \{2\} \\ k^2 + \tfrac{2}{3 k} \left(\vartheta - \vartheta \ln \vartheta + H\right) & \mbox{if } n = 2 \end{cases} \nonumber \\
&\le K_\eps := \begin{cases} k^2 + \tfrac{2}{3k} \left(\tfrac{1}{2-n} + \eps^{-1}\right) & \text{for } 0 < n < 2 \\
k^2 + \tfrac{2}{3k} \left(1+\eps^{-1}\right) & \text{for } n = 2 \\
k^2 + \tfrac{2}{3k} \left(\tfrac{1}{(3-n) (n-2)} + \eps^{-1}\right) & \text{for } 2 < n < 3 \end{cases} \label{s_self}
\end{align}
for $\eps \le H \le \eps^{-1}$, where $\vartheta = H$ if $0 \le H \le 1$ and $\vartheta = 1$ if $H > 1$. Denote by $\Psi_\eps$ the set of all on $\eps \le H \le \eps^{-1}$ continuous $\psi$ such that $k^2 \le \psi \le K_\eps$. Then \eqref{s_self} implies that $\cS$ maps $\Psi_\eps$ into itself. By \eqref{equi} the image $\left\{\cS[\psi] \colon \psi \in \Psi_\eps\right\}$ is equi-continous and therefore compact due to the Arzel\`a-Ascoli theorem. Hence, Schauder's fixed-point theorem yields existence of an in $\eps \le H \le \eps^{-1}$ continuous solution $\psi = \psi_\eps$ to \eqref{fixed_psi_eps} which is thus twice continuously differentiable for $\eps \le H \le \eps^{-1}$ and solves \eqref{approx_psi}.

\medskip

As a last step, we pass to the limit $\eps \downarrow 0$ for the approximating solutions $\left(\psi_\eps\right)_{1 > \eps > 0}$, where we continuously extend according to
\begin{align}\label{continue_psi_eps}
\psi_\eps := \begin{cases} \rbar{\psi_\eps}{H = \eps} & \text{for } 0 \le H < \eps. \\
\rbar{\psi_\eps}{H = \eps^{-1}} & \text{for } H > \eps^{-1}.
\end{cases}
\end{align}
Since $\psi_\eps = \cS[\psi_\eps]$, it holds by the first and second line of \eqref{s_self} for any $R > 0$
\begin{align}\nonumber
k^2 \le \psi_\eps &\le \begin{cases} k^2 + \tfrac{2}{3 k} \left(\tfrac{\vartheta}{2-n} - \tfrac{\vartheta^{3-n}}{(3-n) (2-n)} + H\right) & \text{if } n \in (0,3) \setminus \{2\} \\ k^2 + \tfrac{2}{3 k} \left(\vartheta - \vartheta \ln \vartheta + H\right) & \mbox{if } n = 2 \end{cases} \nonumber \\
&\le \begin{cases} k^2 + \tfrac{2}{3k} \left(\tfrac{1}{2-n} + R\right) & \text{for } 0 < n < 2 \\
k^2 + \tfrac{2}{3k} \left(1+R\right) & \text{for } n = 2 \\
k^2 + \tfrac{2}{3k} \left(\tfrac{1}{(3-n) (n-2)} + R\right) & \text{for } 2 < n < 3 \end{cases} && \mbox{for } \eps^{-1} \le H \le R, \label{bound_psi_eps}
\end{align}
which in view of \eqref{continue_psi_eps} implies that $\left(\psi_\eps\right)_{1 > \eps > 0}$ is bounded on $0 \le H \le R$ for any $R > 0$. Furthermore,
\eqref{equi} implies that also $\left(\frac{\d\psi_\eps}{\d H}\right)_{\eps_0 > \eps > 0}$
is almost everywhere bounded on $R_0 \le H \le R_1$ with arbitrary $0 < R_0 < R_1 < \infty$ if $0 < \eps_0 < \min\left\{R_0,R_1^{-1}\right\}$, so that in particular $\left(\psi_\eps\right)_{\eps_0 > \eps>0}$ is equi-continuous on $R_0 \le H \le R_1$. Hence, additionally taking \eqref{approx_psi_ode} and \eqref{bound_psi_eps} into account, also $\left(\frac{\d^2\psi_\eps}{\d H^2}\right)_{\eps_0 > \eps > 0}$
is bounded and equi-continuous on $R_0 \le H \le R_1$ with arbitrary $0 < R_0 < R_1 < \infty$ if $0 < \eps_0 < \min\left\{R_0,R_1^{-1}\right\}$. The Arzel\`a-Ascoli theorem and a diagonal-sequence argument imply that there exists a sub-sequence of $\left(\psi_\eps\right)_{1 > \eps > 0}$, which we do not re-label, and a limiting function $\psi$ depending on $0 < H < \infty$ such that $\left(\frac{\d^j\psi_\eps}{\d H^j}\right)_{1 > \eps > 0}$ converges uniformly to $\frac{\d^j\psi}{\d H^j}$ as $\eps \downarrow 0$ on $R_0 \le H \le R_1$ for all $0 < R_0 < R_1 < \infty$ and $j \in \{0,1,2\}$. In view of \eqref{approx_psi_ode} in particular \eqref{main_ode} is satisfied. Equation~\eqref{continue_psi_eps} and the first line of \eqref{bound_psi_eps} imply that $\psi$ can be continuously extended to $0 \le H < \infty$ with $\psi = k^2$ at $H = 0$, thus verifying \eqref{BC_main_sys_1}. For $H > 0$ and $\eps \ge H^{-1}$ we obtain from \eqref{fixed_psi_eps} that
\begin{align*}
0 \le \tfrac{\d \psi_\eps}{\d H} \le \tfrac{2}{3k} \int_H^{\eps^{-1}} \tilde H^{-2} \, \d\tilde H \le \tfrac{2}{3k} H^{-1},
\end{align*}
which implies that $\frac{\d \psi}{\d H} \to 0$ as $H \to \infty$, thus proving \eqref{BC_main_sys_2}.
\end{proof}
\bibliography{gnann_wisse_tanner_partial_ArXiV_v2}
\bibliographystyle{plain}

\end{document}